\newcommand{\cI}{\mathcal{I}}
\newcommand{\cQ}{\mathcal{Q}}
\newcommand{\Z}{\mathbb{Z}}
\newcommand{\K}{\mathbb{K}}
\newcommand{\Gl}{\mathrm{GL}}
\newcommand{\ledom}{\trianglelefteq}
\newcommand{\End}{\mathrm{End}}
\DeclareMathOperator{\ch}{char}
\newcommand{\A}{\tau}
\newcommand{\Hom}{\mathrm{Hom}}
\newcommand{\Coker}{\mathrm{Coker}}
\newcommand{\Ker}{\mathrm{Ker}}
\newcommand{\im}{\mathrm{Im}}
\newcommand{\stiy}[1]{}
\newcommand{\stiyy}[1]{}
\newcommand{\stiiy}[1]{}
\newcommand{\stiiyy}[1]{}
\newtheorem{theorem}{Theorem}[section]
\newtheorem{lemma}[theorem]{Lemma}
\newtheorem{corollary}[theorem]{Corollary}
\newtheorem{proposition}[theorem]{Proposition}
\newtheorem{observation}[theorem]{Observation}
\theoremstyle{definition}
\newtheorem{definition}[theorem]{Definition}
\theoremstyle{remark}
\newtheorem{remark}[theorem]{Remark}
\newtheorem{example}[theorem]{Example}
\numberwithin{equation}{section}
\newcommand{\si}[1]{\small{\emph{#1}}}
\DeclareRobustCommand{\authorthing}{
\begin{center}
	\begin{tabular}{lll}
		Karin Erdmann & Ana Paula Santana\thanks{This work was partially supported by the Centro de Matem\'atica da
Universidade de Coimbra (CMUC), funded by the European Regional
Development Fund through the program COMPETE and by the Portuguese
Government through the FCT - Funda\c{c}\~ao para a Ci\^encia e a Tecnologia
under the project PEst-C/MAT/UI0324/2013.
} &
		Ivan Yudin\thanksmark{1}\thanksgap{0.3em}\thanks{The third
		author's work was partially supported by the FCT Grant
SFRH/BPD/31788/2006.}\\	
\si{Mathematical}
& \si{CMUC, Department of} & \si{CMUC, Department of}\\
\si{Institute}&
\si{Mathematics}&
\si{Mathematics}\\
\si{University of Oxford}
&\si{University of Coimbra}
&\si{University of Coimbra}\\
\si{Oxford}
&\si{Coimbra} & \si{Coimbra}\\
\si{UK} & \si{Portugal} &\si{Portugal}\\
\si{erdmann@maths.ox.ac.uk} & \si{aps@mat.uc.pt} & \si{yudin@mat.uc.pt}
	\end{tabular}
\end{center}
}
\date{}
\author{\authorthing}
\title{On Auslander-Reiten sequences for Borel-Schur algebras}
\begin{document}
\maketitle
\abstract{We classify Borel-Schur algebras having finite representation type. We also determine Auslander-Reiten sequences for a large class
of simple modules over Borel-Schur algebras. A partial information on the structure
of the socles of Borel-Schur algebras is given.
}

\section{Introduction}
 Consider the general linear group $GL_n(\K)$, where $\K$ is an infinite field,
and let $B^+$ be the Borel subgroup of $GL_n(\K)$ consisting of all upper
triangular matrices in $GL_n\left( \K \right)$.
The Schur algebras $S(n,r)$ and $S(B^+) :=S(B^+,n,r)$ corresponding to
$GL_n(\K)$ and $B^+$, respectively, are powerful tools in the study
of polynomial representations of $GL_n(\K)$ and $B^+$.
In particular, the
simple modules of $S(B^+)$ labelled by partitions induce to Weyl modules
for $S(n,r)$, and Weyl modules are central objects of study.
In the recent paper~\cite{apsiy}, Borel-Schur algebras were crucial to construct
resolutions for Weyl modules.   Therefore one would like to  understand
better the algebra  $S(B^+)$.

Given a finite dimensional algebra over $\K$, we denote by $A\mbox{-mod}$ the category of all finite dimensional left $A$-modules.
The algebra $A$ is said to have finite representation type if there are only finitely many isomorphism classes of indecomposable modules in $A\mbox{-mod}$.
Representation type of Schur algebras and of infinitesimal Schur algebras
was determined in \cite{K1} and \cite{reptype}.
In particular, it is known when Schur algebras are of finite type. In this paper we obtain the corresponding classification for Borel-Schur algebras, determining the conditions on $n$, $r$  and characteristic of $\K$
under which $S(B^+,n,r)$ is of finite representation type. This is Theorem~\ref{reptype}.

One of the motivations for this classification was  the  construction of  Auslander-Reiten sequences for Borel-Schur algebras.
The class of these sequences, also known as almost split sequences, is an important invariant of the module
category of a finite-dimensional algebra. It provides part of a
presentation of the module category.

Taking advantage of the easy multiplication of some basis elements of $S(B^+),$ we determine Auslander-Reiten sequences for a large class
of simple $S(B^+)$-modules.
 We are able to do this, for an arbitrary
$n$, under some combinatorial conditions.
We note that
when these are satisfied, the relevant simple module does not occur
in the socle of $S(B^+)$.

Several recipes were given in  the 80's for the construction of
Auslander-Reiten sequences. Although we do not use any of these, we should remark that the recipe
due to J.A.~Green~\cite{greenunpublished} was the motivation for this work.

The paper is organized as follows.
Section~\ref{notation} recalls the definitions of the algebras, and
some basic background.
In Section~\ref{almostsplit}, we construct Auslander-Reiten sequences
ending in a simple module $\K_{\lambda}$, where
$\lambda$ satisfies a condition given in~\eqref{cond}. As a by-product
we see that this condition imples that $\K_{\lambda}$ does
not occur in the (left) socle of the algebra.
The main result of this section is Theorem~\ref{prop1}.

Some observations about  the middle term of the Auslander-Reiten sequences  are given in Section~\ref{newsection}.

In Section~\ref{n2} we consider $n=2$ and find  Auslander-Reiten sequences ending in
an arbitrary simple module, that is we deal with the cases missing
in Section~\ref{almostsplit}. As an easy consequence of the results in
this section we can obtain a necessary and sufficient condition for a simple
module to occur in the socle of $S(B^+,2,r)$.
This and other results involving the socle of the Borel-Schur algebra are
summarized in Section~\ref{soclesection}. 

Section~\ref{n3} considers some
cases of Auslander-Reiten sequences, not covered in Section~\ref{almostsplit},
for the algebra $S(B^+,3,r)$. 
%

In Section~\ref{functor} we discuss reduction of rank. This may be of more general
interest, and is in fact used in Section~\ref{finitetype}, where we determine precisely which Borel-Schur algebras are of finite type.

\section{Notation and basic results}
\label{notation}
In this section we establish the notation we will use and give some basic
results. We will follow \cite{aps} and any undefined term may
be found there. 
For further details on the general theory of Schur algebras see \cite{green} and
\cite{martin_book}.  

Throughout the paper $\K$ is an infinite field of arbitrary characteristic, $n$ and $r$ are arbitrary
fixed positive integers and $p$ is any prime number.

For any natural number $s$, we denote by $\mathbf{s}$ the set $\left\{ 1,\dots,
s
\right\}$ and by $\Sigma_s$ the symmetric group on $\mathbf{s}$.
Define the sets of multi-indices $I\left( n,r \right)$ and of
compositions
$\Lambda(n,r)$ by
\begin{align*}
 I(n,r)& =
\left\{ i=\left( i_1,\dots, i_r \right) \, \middle|\, i_\rho \in
\mathbf{n} \mbox{ all } \rho\in \mathbf{r} \right\}\\
\Lambda(n,r)& = \{\, \lambda = (\lambda_1,\dots, \lambda_n) \,|\,
\lambda_\nu\in \Z,\ \lambda_\nu\ge 0\ (\nu\in \mathbf{n}),\ \sum_{\nu\in
\mathbf{n}} \lambda_\nu = r \}.
\end{align*}
We will often write $I$ instead of $I(n,r)$ and $\Lambda$ instead of
$\Lambda(n,r)$.

Given $i\in I$ and  $\lambda\in \Lambda$, we say that $i$ has \emph{weight
$\lambda$} and
write $i\in \lambda$ if
$\lambda_\nu = \# \left\{\,  \rho\in \mathbf{r} \,\middle|\,  i_\rho = \nu
\right\}$, for $\nu \in \mathbf{n}$.

The group $\Sigma_r$ acts on the right of $I$ and of $I\times I$, respectively, by
$i\pi = \left( i_{\pi 1}, \dots, i_{\pi r} \right)$ and $(i,j)\pi = \left( i\pi,
j\pi
\right)$, all $\pi \in \Sigma_r$ and $i$, $j\in I$.
If $i$ and $j$ are in the same $\Sigma_r$-orbit of $I$ we write $i\sim j$.
Also $\left( i,j \right)\sim \left( i',j' \right)$ means these two pairs are in
the same $\Sigma_r$-orbit of $I\times I$.
We denote the stabilizer of $i$ in $\Sigma_r$ by $\Sigma_i$,
that is   $\Sigma_i =
\left\{\, \pi\in \Sigma_r \,\middle|\, i\pi = i \right\}$. We write
$\Sigma_{i,j} = \Sigma_i \cap \Sigma_j$. Given $i$, $j\in I$, then
 $i\le j$ means that
$i_\rho\le j_\rho$ for all $\rho\in \mathbf{r}$, and $i<j$ means that $i\le j$ and
$i\not=j$.

We use $\ledom$ for the ``dominance order'' on $\Lambda$, that is
$\alpha\ledom\beta$ if
$\sum_{\nu=1}^\mu \alpha_\nu \le \sum_{\nu=1}^\mu \beta_\nu$ for all $\mu\in
\mathbf{n}$. Obviously if $i\in\alpha$ and $j\in\beta$ (where $\alpha$, $\beta\in
\Lambda$), then $i\le j$ implies~$\beta\ledom \alpha$.

Given $\lambda\in\Lambda$, we consider in $I$ the special element
$$
l = l(\lambda) = ( \underbrace{1,\dots,1}_{\lambda_1},
\underbrace{2,\dots,2}_{\lambda_2}, \dots, \underbrace{n,\dots, n}_{\lambda_n}
).
$$
Clearly $\Sigma_{l(\lambda)}$ is the parabolic subgroup
associated with $\lambda$
\begin{equation*}
\Sigma_\lambda =
\Sigma_{\{1,\dots,\lambda_1\}}\times \Sigma_{\{\lambda_1 + 1, \dots, \lambda_1+\lambda_2\}}\times
\dots \times \Sigma_{\{\lambda_1 + \dots + \lambda_{n-1} + 1,\dots, r\}}.
\end{equation*}
	For each $\nu \in \mathbf{n-1}$, and each non-negative integer $m\le
\lambda_{\nu+1}$, we define
\begin{equation*}
\lambda(\nu,m) = \left( \lambda_1, \dots,
\lambda_{\nu} + m, \lambda_{\nu+1} -m, \dots, \lambda_n \right)\in \Lambda,
\end{equation*}
	and
write $l(\nu,m)$ for $l\left( \lambda\left( \nu,m \right) \right)$. We have
$l(\nu,m)\le l$.

For the notation of $\lambda$-tableaux the reader is referred to \cite{aps}.
Given $\lambda= \left( \lambda_1,\dots, \lambda_n \right)\in\Lambda$, we choose
the basic $\lambda$-tableau
\begin{equation*}
	T^\lambda =
	\begin{array}{cccccc}
		1 & 2 & \dots & \lambda_1\\
		\lambda_1 + 1 & \lambda_1 + 2 & \dots & \dots & \dots & \lambda_1 +
		\lambda_2 \\
		 \dots \\
		 \lambda_1 + \dots + \lambda_{n-1} + 1 & \dots & \dots & \dots & r
	\end{array}
\end{equation*}
The row-stabilizer of $T^\lambda$, i.e. the subgroup of $\Sigma_r$ consisting of
all those $\pi\in \Sigma_r$ which preserve the rows of $T^\lambda$ is the
parabolic subgroup $\Sigma_\lambda$.

Given $i\in I$, we define the $\lambda$-tableau $T_i^\lambda$ as
\begin{equation*}
	T_i^\lambda =
	\begin{array}{cccccc}
		i_1 & i_2 & \dots & i_{\lambda_1}\\
		i_{\lambda_1 + 1} & i_{\lambda_1+2} & \dots & \dots & \dots &
		i_{\lambda_1 + \lambda_2}\\
		\dots \\
		i_{\lambda_1 +\dots \lambda_{n-1} + 1} & \dots & \dots & \dots &
		i_r.
	\end{array}
\end{equation*}
Then $T_{l}^\lambda$ has only $1$'s in the first row,
$2$'s in the second row, \dots, $n$'s in row $n$. Notice also that
$T^\lambda_{l(\nu,m)}$ differs from $T^\lambda_{l}$ only by
the first $m$ entries of row  $\nu+1$: these entries are all equal to $\nu$.

We say that a $\lambda$-tableau $T^\lambda_i$ is row-semistandard if the entries
in each row of $T^\lambda_i$ are weakly increasing from left to right. We define
\begin{equation*}
	I\left( \lambda \right) := \left\{\,  i\in I \,\middle|\, i\le l\left(
	\lambda \right) \mbox{ and  } T^\lambda_i \mbox{ is row-semistandard }
	\right\}
\end{equation*}
and
\begin{equation*}
	J\left( \lambda \right) := \left\{\, j\in I \,\middle|\,  j\ge l\left(
	\lambda \right)\mbox{ and } T^\lambda_j\mbox{ is row-semistandard}
	\right\}.
\end{equation*}
The following obvious fact will be used later in this paper:
\begin{equation}
	\label{eq01}
	\mbox{If $\lambda_n\not=0$ and $m\le \lambda_n$, then $J\left( \lambda
	\right) = \left\{\, j\in J\left( \lambda(n-1,m) \right) \,\middle|\,
	j\ge l\left( \lambda \right) \right\}$. }
\end{equation}
Next we recall the definition of Schur algebra and of  Borel-Schur algebra
as they were introduced in \cite{green1}.

The general linear group $\Gl_n(\K)$ acts on $\K^n$ by
multiplication. So $\Gl_n\left( \K \right)$ acts on the $r$-fold tensor product
$\left( \K^n \right)^{\otimes r}$ by the rule
\begin{equation*}
	g\left( v_1\otimes \dots \otimes v_r \right) = gv_1 \otimes \dots
	\otimes gv_r, \mbox{ all $g\in \Gl_n\left( \K \right)$, $v_1$, \dots,
	$v_r\in \K^n $}.
\end{equation*}
Extending by linearity this action to the group algebra $\K\Gl_n\left( \K
\right)$, we obtain a homomorphism of algebras
$	T\colon \K\Gl_n\left( \K \right) \to \End_\K\left( \left( \K^n
	\right)^{\otimes r} \right).$
The image of $T$, i.e. $T\left( \K\Gl_n\left( \K \right) \right)$ is called
the \emph{Schur algebra} for $\K$, $n$, $r$ and is denoted by $S\left( n,r
\right)$. Let $B^+ = B_\K^+\left( n,r \right)$ denote the Borel subgroup of
$\Gl_n\left( \K \right)$ consisting of all upper triangular matrices in
$\Gl_n\left(\K  \right)$. The \emph{Borel-Schur algebra} $S\left( B^+
\right) = S\left( B^+, n,r \right)$ is the subalgebra $T\left( \K B^+ \right)$ of
$S\left(n,r \right)$.

Associated with each pair $\left( i,j \right)\in I\times I$, there is a well
defined element $\xi_{i,j}$ of $S(n,r)$ (see \cite{green1}). These elements have the
property that $\xi_{i,j} = \xi_{k,h}$ if and only if $\left( i,j \right)\sim
\left( k,h \right)$. If we eliminate repetitions in the set $\left\{\,
\xi_{i,j}
\,\middle|\, (i,j)\in I\times I \right\}$ then we obtain a basis of $S(n,r)$.
Also $S\left( B^+ \right) = \K\left\{\, \xi_{i,j} \,\middle|\, i\le j,\ \left(
i,j \right)\in I\times I \right\}$.

If $i$ has weight $\alpha\in \Lambda$, we write $\xi_{i,i}= \xi_{\alpha}$. The
set $\left\{\, \xi_{\alpha} \,\middle|\, \alpha\in \Lambda \right\}$ is a set of
orthogonal idempotents and $1_{S(n,r)} = \sum_{\alpha\in\Lambda} \xi_{\alpha}$.

A formula for the product of two basis elements is the following (see \cite{green1}):
$\xi_{i,j}\xi_{k,h} = 0$, unless $j\sim k$; and
\begin{equation}
	\label{0.0}
	\xi_{i,j}\xi_{j,h} = \sum_{\sigma} \left[ \Sigma_{i\sigma,
	h}:\Sigma_{i\sigma, j, h} \right] \xi_{i\sigma, h}
\end{equation}
where the sum is over a transversal $\left\{ \sigma \right\}$ of the set of all double cosets
$\Sigma_{i,j}\sigma \Sigma_{j,h}$ in $\Sigma_j$.

\begin{observation}
	\label{observation2}
	\begin{enumerate}
		\item $\xi_{\alpha}\xi_{i,j} = \xi_{i,j}$ or zero, according to
			$i\in \alpha$ or $i\not\in \alpha$. Similarly,
			$\xi_{i,j}\xi_{\beta} = \xi_{i,j}$ or zero, according
			to $j\in \beta$ or $j\not\in \beta$.
		\item		 If $\Sigma_{i,j} \Sigma_{j,h} = \Sigma_j$, then the
 	product $\xi_{i,j}\xi_{j,h}$ is a scalar multiple of $\xi_{i,h}$.
	\end{enumerate}
\end{observation}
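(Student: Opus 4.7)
The plan is to derive both parts directly from the multiplication formula~\eqref{0.0} together with the equivalence rule $\xi_{i,j}=\xi_{k,h}\iff (i,j)\sim(k,h)$.

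For part~(1), start by fixing any $k\in\alpha$ so that $\xi_\alpha=\xi_{k,k}$. The formula $\xi_{k,k}\xi_{i,j}=0$ unless $k\sim i$ (appearing in the sentence preceding~\eqref{0.0}) immediately handles the case $i\notin\alpha$, since $k\sim i$ forces $i$ to have the same weight as $k$, namely $\alpha$. In the remaining case $i\in\alpha$, choose $\pi\in\Sigma_r$ with $k=i\pi$ and use $\xi_{i,j}=\xi_{i\pi,j\pi}=\xi_{k,j\pi}$ to rewrite the product as $\xi_{k,k}\xi_{k,j\pi}$. Now apply~\eqref{0.0} with the roles $(i,j,h)=(k,k,j\pi)$: the double cosets to be summed over live in $\Sigma_k$, and since $\Sigma_{k,k}=\Sigma_k$ the set $\Sigma_{k,k}\sigma\Sigma_{k,j\pi}$ already equals $\Sigma_k$ for $\sigma=1$, so only one term survives. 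The coefficient $[\Sigma_{k,j\pi}:\Sigma_{k,k,j\pi}]$ is $1$ because $\Sigma_{k,k,j\pi}=\Sigma_k\cap\Sigma_{j\pi}=\Sigma_{k,j\pi}$. Hence the product equals $\xi_{k,j\pi}=\xi_{i,j}$, as required. The argument for $\xi_{i,j}\xi_\beta$ is symmetric: write $\xi_\beta=\xi_{k,k}$ for some $k\in\beta$, note that the product vanishes unless $j\sim k$, and in the remaining case transport both sides so that $\xi_\beta$ sits on the right of a matching index.

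For part~(2), the hypothesis $\Sigma_{i,j}\Sigma_{j,h}=\Sigma_j$ says that $\Sigma_j$ is itself a single $(\Sigma_{i,j},\Sigma_{j,h})$-double coset. The transversal $\{\sigma\}$ in~\eqref{0.0} can then be taken to consist of the single element $\sigma=1$, so the sum collapses to the one term $[\Sigma_{i,h}:\Sigma_{i,j,h}]\,\xi_{i,h}$, which is manifestly a scalar multiple of $\xi_{i,h}$.

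Neither statement presents a genuine obstacle; the only real care needed is the bookkeeping in part~(1), where one must be careful to rewrite $\xi_{i,j}$ via the $\Sigma_r$-action so that the left index matches the chosen representative of $\xi_\alpha$ before invoking~\eqref{0.0}. Everything else reduces to recognising that the relevant double coset sums are singletons with trivial index.
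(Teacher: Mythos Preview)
Your argument is correct. The paper does not actually supply a proof of Observation~\ref{observation2}: it is stated as an immediate consequence of the multiplication rule~\eqref{0.0} and the preceding vanishing condition, and the text moves straight on to Lemma~\ref{lemma1}. Your write-up simply makes explicit the one-double-coset computation that the authors leave to the reader, so there is nothing to compare beyond noting that you have unpacked what the paper takes as evident.
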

Here we are particularly interested in products of the type
$\xi_{l\left( \nu,m \right),l}\xi_{l,j}$, for $l = l\left( \lambda \right)$,
and $j\in J\left( \lambda \right)$, for some $\lambda\in \Lambda$.

\begin{lemma}
	\label{lemma1}
	Let $\lambda\in \Lambda$, $\nu \in \mathbf{n-1}$, $0\le m\le
	\lambda_{\nu + 1}$, and $j\in J\left( \lambda \right)$. If the
$\nu+1$-st row of $T_j^{\lambda}$ is constant then
	$\Sigma_{l\left( \nu, m \right), l} \Sigma_{l,j} = \Sigma_l$.
\end{lemma}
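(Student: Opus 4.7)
The plan is to work row by row on $T^\lambda$ and use that both $\Sigma_{l(\nu,m),l}$ and $\Sigma_{l,j}$ decompose nicely with respect to the row partition of $T^\lambda$. Write $R_\mu$ for the set of positions in row $\mu$ of $T^\lambda$, so that
\begin{equation*}
\Sigma_l \;=\; \Sigma_\lambda \;=\; \prod_{\mu=1}^{n} \Sigma_{R_\mu}.
\end{equation*}
Since the factors $\Sigma_{R_\mu}$ commute pairwise (they act on disjoint subsets of $\mathbf{r}$), to prove $\Sigma_{l(\nu,m),l}\,\Sigma_{l,j} = \Sigma_l$ it suffices to show that each $\Sigma_{R_\mu}$ is contained in either $\Sigma_{l(\nu,m),l}$ or $\Sigma_{l,j}$; the reverse inclusion is immediate because both factors already sit inside $\Sigma_l$.

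Next I would identify $\Sigma_{l(\nu,m),l}$. By construction $l(\nu,m)$ agrees with $l$ on every row other than $R_{\nu+1}$, while on $R_{\nu+1}$ its first $m$ entries are $\nu$ and its last $\lambda_{\nu+1}-m$ entries are $\nu+1$. Intersecting $\Sigma_{l(\nu,m)}$ with $\Sigma_l$ therefore gives
\begin{equation*}
\Sigma_{l(\nu,m),l} \;=\; \Bigl(\prod_{\mu\ne \nu+1}\Sigma_{R_\mu}\Bigr) \times \Sigma_{R'_{\nu+1}}\times \Sigma_{R''_{\nu+1}},
\end{equation*}
where $R'_{\nu+1}$ and $R''_{\nu+1}$ are the sub-blocks of $R_{\nu+1}$ of sizes $m$ and $\lambda_{\nu+1}-m$. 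In particular, $\Sigma_{R_\mu}\subseteq \Sigma_{l(\nu,m),l}$ for every $\mu\ne \nu+1$.

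Finally I would apply the hypothesis: since the $(\nu+1)$-st row of $T^\lambda_j$ is constant, $j$ takes a single value on $R_{\nu+1}$, so any permutation of $R_{\nu+1}$ lies in $\Sigma_j$; combined with the fact that such a permutation clearly lies in $\Sigma_l$, this yields $\Sigma_{R_{\nu+1}}\subseteq \Sigma_{l,j}$. Putting the two containments together, $\Sigma_{l(\nu,m),l}\,\Sigma_{l,j}$ contains $\Sigma_{R_\mu}$ for every $\mu\ne \nu+1$ (coming from the left factor) and also $\Sigma_{R_{\nu+1}}$ (coming from the right factor), hence it contains the full product $\Sigma_l$, as required. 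There is no real obstacle here; the only point requiring care is the bookkeeping of which positions in $\mathbf{r}$ carry which value of $l(\nu,m)$ versus $l$, which is handled by the description of $T^\lambda_{l(\nu,m)}$ recalled just before the lemma.
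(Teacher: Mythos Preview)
Your proof is correct and follows essentially the same approach as the paper. Both arguments use the row-by-row factorisation $\Sigma_l=\prod_\mu \Sigma_{R_\mu}$, compute that $\Sigma_{l(\nu,m),l}$ contains the full factor $\Sigma_{R_\mu}$ for every $\mu\neq\nu+1$, and then invoke the constancy hypothesis on row $\nu+1$ of $T^\lambda_j$ to obtain $\Sigma_{R_{\nu+1}}\subseteq\Sigma_{l,j}$; the paper phrases the conclusion via the factor-wise product $U_1\times\cdots\times U_n$ of $\Sigma_{l,j}$, while you phrase it as ``each row factor lies in one of the two subgroups,'' but the content is identical.
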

\begin{proof}
	We have $\Sigma_l = \Sigma_\lambda$.  Now we know that
	$\Sigma_{l\left( \nu,m \right)}$ differs from  $\Sigma_\lambda$ only in
	factors $\nu$ and $\nu+1$
	and in these it is
	\begin{equation*}
		\Sigma_{\{t+1,\dots, t+\lambda_{\nu}+m\}}\times \Sigma_{\{t+
		\lambda_\nu + m+1, \dots, t + \lambda_\nu + \lambda_{\nu+1}\}},
	\end{equation*}
		where $t = \lambda_1 + \dots + \lambda_{\nu -1}$. It
	follows that the intersection $\Sigma_{l\left( \nu,m \right), l}$
	differs from $\Sigma_\lambda$ only in factor $\nu+1$ and this is
	\begin{equation*}
		\Sigma_{\{t + \lambda_\nu + 1, \dots, t+ \lambda_\nu +
		m\}}\times
		\Sigma_{\{t + \lambda_\nu + m + 1, \dots, t+ \lambda_\nu +
		\lambda_{\nu + 1}\}}.
	\end{equation*}
		We can write $\Sigma_{l,j} = U_1\times \dots \times U_n$, where
	$U_s$ is a subgroup of $\Sigma_{\lambda_s}$. Therefore the product
	$\Sigma_{l\left( \nu,m \right), l} \Sigma_{l,j} = \Sigma_l$ { when the product of the two $\left( \nu+1 \right)$-st factors is
	$\Sigma_{\lambda_{\nu+1}}$. This holds if  $U_{\nu+1} =
	\Sigma_{\lambda_{\nu+1}}$, i.e.,  if the $\left( \nu+1
	\right)$-st row of $T^\lambda_j$ is constant.}
\end{proof}
\begin{lemma}
	\label{lemma2}
	Let $\lambda\in\Lambda$, $\nu\in \mathbf{n-1}$, $0\le m\le
	\lambda_{\nu+1}$. Given $j\in J\left( \lambda \right)$, suppose that the
	$\left( \nu+1 \right)$-st row of $T^\lambda_j$ is constant with all
	entries equal to $c$, and that $c$ occurs exactly $a$ times in row
	$\nu$. Then
	\begin{equation*}
		\xi_{l\left( \nu,m \right), l}\xi_{l,j} = \binom{a+m}{m}
		\xi_{l\left( \nu,m \right), j}.
	\end{equation*}
If $\nu=n-1$ then the hypothesis holds for all $j\in J(\lambda)$.
\end{lemma}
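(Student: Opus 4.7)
\medskip

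The plan is to apply the multiplication formula~\eqref{0.0} together with Lemma~\ref{lemma1}: since the $(\nu+1)$-st row of $T^\lambda_j$ is constant, Lemma~\ref{lemma1} gives $\Sigma_{l(\nu,m),l}\Sigma_{l,j}=\Sigma_l$, so $\Sigma_l$ is a single double coset and formula~\eqref{0.0} collapses to one term. By Observation~\ref{observation2}(2) the product is a scalar multiple of $\xi_{l(\nu,m),j}$, namely
\begin{equation*}
\xi_{l(\nu,m),l}\xi_{l,j} = [\Sigma_{l(\nu,m),j}:\Sigma_{l(\nu,m),l,j}]\,\xi_{l(\nu,m),j}.
\end{equation*}
So the whole task reduces to computing this index and showing it equals $\binom{a+m}{m}$.

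To compute the index, I will use the block decompositions already introduced in the proof of Lemma~\ref{lemma1}. Writing $t=\lambda_1+\dots+\lambda_{\nu-1}$, the group $\Sigma_{l(\nu,m)}=\Sigma_{\lambda(\nu,m)}$ has its $\nu$-th factor equal to $\Sigma_{\{t+1,\dots,t+\lambda_\nu+m\}}$ and its $(\nu+1)$-st factor equal to $\Sigma_{\{t+\lambda_\nu+m+1,\dots,t+\lambda_\nu+\lambda_{\nu+1}\}}$, the others agreeing with $\Sigma_\lambda$. Both $\Sigma_{l(\nu,m),j}$ and $\Sigma_{l(\nu,m),l,j}$ are contained in $\Sigma_{l(\nu,m)}$ and decompose as products over the same blocks, so I only need to compare their restrictions block by block.

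Outside block $\nu$ of $\Sigma_{l(\nu,m)}$, the two groups coincide: in block $\nu+1$ both restrict to $\Sigma_{\{t+\lambda_\nu+m+1,\dots,t+\lambda_\nu+\lambda_{\nu+1}\}}$ (using that $U_{\nu+1}=\Sigma_{\lambda_{\nu+1}}$ because row $\nu+1$ of $T^\lambda_j$ is constant equal to $c$, together with the intersection computed in Lemma~\ref{lemma1}), and in every other block both restrict to the same $U_s$ from $\Sigma_{l,j}$. So the nontrivial comparison happens only in block $\nu$, i.e.\ on the positions $\{t+1,\dots,t+\lambda_\nu+m\}$. On these positions, the $j$-values are: the $\lambda_\nu$ entries of row $\nu$ of $T^\lambda_j$ (weakly increasing, since $T^\lambda_j$ is row-semistandard), with $c$ occurring $a$ times among them, followed by $m$ further entries all equal to $c$ (coming from the first $m$ positions of row $\nu+1$). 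Hence on this block $\Sigma_{l(\nu,m),j}$ is the Young subgroup whose factor on $c$-positions is $\Sigma_{a+m}$, whereas $\Sigma_{l(\nu,m),l,j}$ separates the $a$ positions in row $\nu$ from the $m$ positions beyond $t+\lambda_\nu$ and gives $\Sigma_a\times\Sigma_m$ on those same positions; the factors on all other $j$-values are identical. The index is therefore $(a+m)!/(a!\,m!)=\binom{a+m}{m}$, as required.

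The main calculational point is the block-$\nu$ comparison above; it is not deep but must be set up carefully, since $\Sigma_{l(\nu,m),l,j}$ knows the split of the $c$-positions into the row-$\nu$ part and the extended part, while $\Sigma_{l(\nu,m),j}$ merges them. Finally, for the last sentence: if $\nu=n-1$, row $n$ of $T^\lambda_j$ has entries at least $n$ (because $j\ge l(\lambda)$) and at most $n$ (because $j\in I(n,r)$), hence is constant equal to $n$, so the hypothesis of the lemma holds automatically for every $j\in J(\lambda)$.
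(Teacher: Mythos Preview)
Your proof is correct and follows essentially the same approach as the paper: both invoke Lemma~\ref{lemma1} and Observation~\ref{observation2} to reduce the product to the index $[\Sigma_{l(\nu,m),j}:\Sigma_{l(\nu,m),l,j}]$, and then compute this index by comparing the two stabilizers block by block, finding the only discrepancy on the $c$-positions of the enlarged $\nu$-th block, yielding $\Sigma_{a+m}$ versus $\Sigma_a\times\Sigma_m$. Your write-up is slightly more explicit in organizing the block decomposition inside $\Sigma_{l(\nu,m)}$ and in justifying the final sentence for $\nu=n-1$, but the argument is the same.
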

\begin{proof}
	From Lemma~\ref{lemma1} and  Observation~\ref{observation2}, we know
	that
	\begin{equation*}	
		\xi_{l\left( \nu,m \right),l}\xi_{l,j} = \left[ \Sigma_{l\left(
	\nu,m
	\right), j} : \Sigma_{l\left( \nu,m \right),l,j} \right] \xi_{l\left(
	\nu,m \right), j}.
	\end{equation*}
	Now $\Sigma_{l\left( \nu,m \right),j}$ and
	$\Sigma_{l\left( \nu,m \right), j,l}$ differ only in factors $\nu$ and
	$\nu+1$. If the entries of row $\nu$ of $T^\lambda$ where $c$ occurs in
	$T^\lambda_j$ are $t_1$, \dots, $t_a$, then factors $\nu$ and $\nu+1$ of
	$\Sigma_{l\left( \nu,m \right), j}$ and $\Sigma_{l\left( \nu,m
	\right), l,j}$ are, respectively,
	\begin{equation*}	
		\cdots \times \Sigma_{\{t_1, \dots,
		t_a, \lambda_1 + \dots + \lambda_{\nu}+1, \dots, \lambda_1 +
		\dots +
		\lambda_{\nu }+m\}} \times \cdots
	\end{equation*}
		and
		\begin{equation*}
	\cdots \times
		\Sigma_{\{t_1,\dots,
		t_a\}}\times \Sigma_{\{\lambda_1, \dots, \lambda_{\nu}+1, \dots,
		\lambda_{1} + \dots + \lambda_{\nu} + m\}}\times \dots.
		\end{equation*}
		Therefore
$\left[ \Sigma_{l\left( \nu,m \right),j} : \Sigma_{l\left( \nu,m \right),l,j} \right] = \binom{a + m}{m}$.
\end{proof}
Given $\lambda\in \Lambda$, let $\K_\lambda$ denote the one-dimensional
$S\left( B^+ \right)$-module $\K$, where $\xi_\lambda$ acts as identity and all
the other basis elements, $\xi_{i,j}$, where $i\le j$ and $\left( i,j
\right)\not\sim\left(
l,l \right)$, act as zero. It is well known (see \cite{aps}), that:
\begin{enumerate}
	\item $\left\{\, \K_\lambda \,\middle|\, \lambda\in \Lambda \right\}$ is
		a full set of irreducible $S\left( B^+ \right)$-modules.
	\item The module $S\left( B^+ \right)\xi_\lambda$ is a projective cover of
		$\K_\lambda$.
	\item The module $S\left( B^+ \right)\xi_\lambda$ has a $\K$-basis $\left\{\,
		\xi_{i,l}
		\,\middle|\, i\in I\left( \lambda \right) \right\}$.
	\item The module $\K_\lambda$ is  projective  if and
		only if $\lambda = \left( r,0,\dots, 0 \right)$.
	This is a consequence of $\#I\left(
		\lambda
		\right) = 1$ if and only if $\lambda = \left( r,0,\dots, 0 \right)$.
\end{enumerate}
To calculate an Auslander-Reiten sequence ending with $\K_\lambda$, we need to know
the first two steps of a minimal projective resolution of $\K_\lambda$. For
this, define
\begin{align*}
	P_0 &:= S\left( B^+ \right)\xi_\lambda; &
	P_1 &:=
	\begin{cases}
		\bigoplus\limits_{\nu\in \mathbf{n-1}} S\left( B^+
		\right)\xi_{\lambda\left( \nu, 1 \right)}, & \mbox{if $\ch \K
		=0$;}\\[1ex]
		\bigoplus\limits_{\nu\in \mathbf{n-1}} \bigoplus\limits_{1\le p^{d_\nu}\le
		\lambda_{\nu+1}} S\left( B^+ \right) \xi_{\lambda\left( \nu, p^{d_\nu} \right)}
		, & \mbox{ if $\ch\K =p$.}
	\end{cases}
\end{align*}
Then (see \cite[(5.4)]{aps}) the first two steps of a minimal projective
resolution of $\K_{\lambda}$ are
\begin{equation}
	\label{eq03}
	P_1\stackrel{p_1}{\longrightarrow} P_0 \stackrel{p_0}{\longrightarrow}
	\K_\lambda \to 0.
\end{equation}
Here $p_0$ is the $S(B^+)$-homomorphism defined on the generator by
{ $p_0(\xi_{\lambda}) = 1$}. The $S(B^+)$-homomorphism $p_1$ is  defined
on generators by
$p_1(\xi_{\lambda(\nu,1)})= \xi_{l(\nu, 1),l},
$
when char$(\K)=0$, and
$
p_1(\xi_{\lambda(\nu,p^{d_{\nu}})})= \xi_{l(\nu, p^{d_{\nu}}),l},
$
when char$(\K)=p$.

Notice that this determines the quiver of the algebra $S(B^+)$.
In fact there is an arrow from the vertex of $\lambda$ to  the vertex of $\mu$ if and only if $S(B^+)\xi_{\mu}$ occurs
as a summand of $P_1$.

\section{Auslander-Reiten sequences}
\label{almostsplit}
In this section we give an overview of some results and definitions connected to
the notion of Auslander-Reiten sequences.
Let $A$ be a finite dimensional algebra over~$\K$.

A short exact sequence
\begin{equation*}
	(E)\ \  \ 0 \to N \stackrel{f}{\longrightarrow } E
	\stackrel{g}{\longrightarrow } S \to 0
\end{equation*}
is said to be \emph{Auslander-Reiten}  if
\begin{enumerate}[(i)]
	\item  (E) is not split;
	\item the modules $S$ and $N$ are indecomposable;
	\item if $X$ is an indecomposable $A$-module and $h\colon X\to S$ is a
		non-invertible
		homomorphism of $A$-modules, then $h$ factors through $g$.
\end{enumerate}
\begin{theorem}[\cite{auslanderreiten}]
	Given any non-projective indecomposable $A$-module $S$, there is an
	Auslander-Reiten sequence $(E)$ ending with $S$. Moreover, $(E)$ is
	determined by $S$, uniquely up to isomorphism of short exact sequences.
\end{theorem}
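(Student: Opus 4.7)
The plan is to construct the sequence via the Auslander-Reiten translate $\tau = D\circ \mathrm{Tr}$ and then verify its defining properties. Given the non-projective indecomposable $S$, take a minimal projective presentation $P_1 \xrightarrow{d} P_0 \to S \to 0$, apply the functor $(-)^{*} = \Hom_A(-,A)$ to obtain a sequence of right $A$-modules, and set $\mathrm{Tr}\, S := \Coker(d^{*})$. Then $\tau S := D\,\mathrm{Tr}\, S$, where $D = \Hom_{\K}(-,\K)$, is a left $A$-module. Standard arguments (independence of the chosen minimal presentation up to projective summands, together with Krull--Schmidt) show that $\tau S$ is indecomposable and non-injective.

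The core of the argument is the Auslander-Reiten formula, giving a natural isomorphism
\begin{equation*}
\mathrm{Ext}^{1}_A(X, \tau S) \cong D\,\underline{\Hom}_A(S, X)
\end{equation*}
for every $X \in A\mbox{-mod}$, where $\underline{\Hom}$ denotes homomorphisms modulo those factoring through a projective. Establishing this is the main technical obstacle: one computes the left side from an injective copresentation of $\tau S$, which dualises to the projective presentation of $S$, and uses the adjunction between $(-)^{*}$ and $D$ to identify the outcome with $D\,\underline{\Hom}_A(S, X)$. Specialising to $X = S$ yields $\mathrm{Ext}^{1}_A(S, \tau S) \cong D\,\underline{\End}_A(S)$, and since $S$ is non-projective indecomposable, $\underline{\End}_A(S)$ is a non-zero local finite-dimensional $\K$-algebra whose linear dual therefore has non-zero left socle as an $\underline{\End}_A(S)$-module.

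Pick a non-zero $\eta$ in this socle and let $(E)\colon 0 \to \tau S \to E \xrightarrow{g} S \to 0$ be the extension represented by $\eta$. The sequence is non-split because $\eta \neq 0$, and its ends are indecomposable. For the almost split property, consider an indecomposable $X$ and a non-invertible $h\colon X \to S$. Since $X$ and $S$ are indecomposable and $h$ is not an isomorphism, $h$ is not a split epimorphism, so for every $\phi\colon S \to X$ the composite $h\phi$ is a non-unit of the local ring $\End_A(S)$ and thus represents an element of $\rad\,\underline{\End}_A(S)$. By the socle choice, $\eta$ annihilates this radical, and the pullback $h^{*}[(E)] \in \mathrm{Ext}^{1}_A(X, \tau S)$, which under the AR formula corresponds to the functional $\phi \mapsto \eta([h\phi])$, is therefore zero. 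The pullback sequence splits, and the resulting section composed with the canonical map to $E$ produces the required factorisation $X \to E \to S$ of $h$ through $g$.

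For uniqueness, given another Auslander-Reiten sequence $(E')$ ending in $S$, the almost split property of $(E')$ applied to $g$ (which is a non-split epimorphism) yields a morphism $E \to E'$ lifting $\id_S$; reversing the roles gives $E' \to E$. The two composites are endomorphisms of $E$ and $E'$ inducing the identity on $S$; Fitting's lemma applied to their restrictions to the indecomposable kernel $\tau S$ forces both composites to be automorphisms, and the five lemma then delivers $(E) \cong (E')$ as short exact sequences.
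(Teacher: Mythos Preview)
The paper does not prove this theorem: it is stated as a classical result attributed to Auslander and Reiten via the citation, with no proof given in the paper itself. Your proposal is the standard textbook construction of the Auslander--Reiten sequence via the translate $\tau = D\,\mathrm{Tr}$ together with the Auslander--Reiten formula, and the argument is essentially correct.

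One small point worth tightening in your uniqueness paragraph: when you apply the almost split property of $(E')$ to the map $g\colon E\to S$, property~(iii) as stated in the paper requires the source to be indecomposable, whereas $E$ need not be. The fix is routine --- decompose $E$ into indecomposable summands, observe that each component of $g$ is non-invertible (otherwise $(E)$ would split), factor each component through $g'$, and reassemble --- but it should be said.
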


 In this paper we will  construct an Auslander-Reiten sequence ending with
$\K_\lambda$, for a large number of $\lambda\in \Lambda(n,r)$.
 We will use two contravariant functors
\begin{equation*}
	D,\ (\cdot)^t\colon \mathrm{mod}S(B^+) \to \mathrm{mod}S(B^+)^{op}
\end{equation*}
where  for every $X\in \mathrm{mod}S(B^+)$
\begin{align*}
	X^t &:= \Hom_{S(B^+)}(X, S(B^+)),  & D X  & :=
	\Hom_\K(X,\K).
\end{align*}
Recall that $S(B^+)$ acts on the right of $X^t$ and $DX $,
respectively, by
$
	\left( \phi \xi \right) \left( x \right) = \phi\left( x \right) \xi$ and
$	\left( \psi \xi \right)\left( x \right)  =\psi\left( \xi x \right),$
where $\phi\in X^t$, $\psi\in DX $, $\xi \in S\left( B^+
\right)$, and $x\in X$.

Consider the Nakayama functor~\cite[p.10]{gabriel}
\begin{equation*}
	D(\cdot)^t\colon \mathrm{mod}S(B^+) \to \mathrm{mod}S(B^+).
\end{equation*}
This is a covariant right exact functor which turns projectives into injectives.

Fix $\lambda\in \Lambda(n,r)$, $\lambda \not= \left( r,0,\dots, 0 \right)$. Then
$\K_\lambda$ is indecomposable and non-projective. Consider the first two steps
of the minimal projective resolution~\eqref{eq03} of $\K_\lambda$
\begin{equation*}
	P_1\stackrel{p_1}{\longrightarrow}  P_0\stackrel{p_0}{\longrightarrow}
	\K_\lambda \to 0 .
\end{equation*}
Applying  the Nakayama functor we get from this the exact sequence
\begin{equation}
	\label{firstses}
	0 \to \tau \K_\lambda \to DP_1^t \stackrel{Dp_1^t}{\longrightarrow} DP_0^t
	\stackrel{Dp_0^t}{\longrightarrow} D\K_\lambda^t\to 0,
\end{equation}
where $\tau\K_\lambda= \Ker Dp_1^t\cong D\left( \Coker p_1^t \right)$, that
is $\tau$ is the Auslander-Reiten translation.

Given an $S\left( B^+ \right)$-homomorphism $\theta\colon \K_\lambda \to DP_0^t$, consider the short
exact sequence obtained from \eqref{firstses} by pullback along $\theta$:
\begin{equation}
	\label{secondses}
	0 \to \tau\K_\lambda \stackrel{f}{\longrightarrow} E\left( \theta
	\right) \stackrel{g}{\longrightarrow} \K_\lambda\to 0.
\end{equation}
Here $E\left( \theta \right) = \left\{\, (z,c)\in DP_1^t\oplus \K_\lambda
\,\middle|\, Dp_1^t\left( z \right) = \theta\left( c \right) \right\}$ is an
$S(B^+)$-submodule of $DP_1^t \oplus \K_\lambda$, and $f$, $g$ are the homomorphisms
of $S(B^+)$-modules defined by
$g\left( z,c \right) = c$, $f\left( v \right) = \left( v,0 \right)$,
for all $z\in Dp_1^t$, $v\in \A\K_\lambda$, and $c\in \K_\lambda$.
If we choose an appropriate $\theta$,
then~\eqref{secondses} is an Auslander-Reiten sequence. In our case, $DP_0^t$ has simple socle isomorphic to the one-dimensional
module $\K_{\lambda}$, so one can take for
$\theta$ any non-zero $S(B^+)$-ho\-mo\-mor\-phism.  Before constructing  Auslander-Reiten sequences,
we will determine $\K$-bases of $P_0$ and $P_1$ adapted to our
calculations.

Notice first that $ \left( S\left( B^+ \right)\xi_\alpha \right)^t$ and
$\xi_\alpha S\left( B^+ \right)$ are isomorphic right $S\left( B^+
\right)$-modules for every $\alpha\in \Lambda$. So we will identify these two $S\left( B^+ \right)$-modules.
We will also identify $\left( \bigoplus_{\alpha\in \Lambda'} S\left( B^+
\right)\xi_\alpha \right)^t$ with $\bigoplus_{\alpha\in \Lambda'} \xi_\alpha
S\left( B^+ \right)$, for every family $\Lambda'$ of elements in $\Lambda$.
\begin{lemma}
	\label{lemma21}
	Let $\alpha\in \Lambda$. Then $\left\{\, \xi_{l\left( \alpha \right), j}
	\,\middle|\, j\in J\left(\alpha  \right)
	\right\}$ is a $\K$-basis of $\xi_\alpha S\left( B^+ \right)$.
\end{lemma}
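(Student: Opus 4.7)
The plan is to identify a $\K$-basis of $\xi_\alpha S(B^+)$ in two stages: first describe the basis coming from the general basis of $S(B^+)$ via Observation~\ref{observation2}(1), then show that each $\Sigma_r$-orbit has a unique representative of the prescribed form $(l(\alpha), j)$ with $j\in J(\alpha)$.

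First I would note that, by Observation~\ref{observation2}(1), the set $\{\xi_{i,j}\mid i\le j,\ i\in\alpha\}$ (one representative per $\Sigma_r$-orbit of pairs) spans $\xi_\alpha S(B^+)$ and is linearly independent, since it is a subset of a basis of $S(B^+)$. So the task reduces to showing that the map $j\mapsto \xi_{l(\alpha),j}$ from $J(\alpha)$ to these orbits is a bijection.

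For surjectivity, take any pair $(i,j)$ with $i\in\alpha$ and $i\le j$. Since $i$ has weight $\alpha$, there exists $\pi\in\Sigma_r$ with $i\pi = l(\alpha)$, and then $(i,j)\sim (l(\alpha), j\pi)$. The inequality $i\le j$ is preserved entrywise under the simultaneous action of $\pi$, so $l(\alpha)\le j\pi$. Next I would use the fact that the stabilizer of $l(\alpha)$ is exactly $\Sigma_\alpha$, which permutes indices within each row of $T^\alpha$; applying a suitable $\sigma\in\Sigma_\alpha$ to $j\pi$ produces $j'=j\pi\sigma$ for which $T^\alpha_{j'}$ is row-semistandard. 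Crucially, since $T^\alpha_{l(\alpha)}$ is constant along each row, permutations within rows cannot spoil the inequality $l(\alpha)\le j'$, so $j'\in J(\alpha)$ and $(i,j)\sim (l(\alpha), j')$.

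For injectivity, suppose $(l(\alpha), j)\sim(l(\alpha), j')$ with $j,j'\in J(\alpha)$. Any $\pi\in\Sigma_r$ realising this equivalence must fix $l(\alpha)$, hence lies in $\Sigma_\alpha$. Thus $T^\alpha_{j'}$ is obtained from $T^\alpha_j$ by permuting entries within each row. As both tableaux are row-semistandard (weakly increasing along rows), two such rearrangements of the same multiset of entries must coincide, giving $j=j'$. The only step requiring a little care is checking that passing from $(i,j)$ to $(l(\alpha), j\pi\sigma)$ preserves the inequality $l(\alpha)\le\,\cdot\,$ throughout — and this works precisely because $l(\alpha)$ is constant on the blocks that $\Sigma_\alpha$ permutes, which is the only mildly delicate point.
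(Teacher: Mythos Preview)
Your proof is correct and follows essentially the same approach as the paper: both identify the basis of $\xi_\alpha S(B^+)$ with a set of $\Sigma_\alpha$-orbit representatives of the indices $j\ge l(\alpha)$, and use that row-semistandardness selects a unique representative from each orbit. The paper's argument is more terse (it starts directly from the spanning set $\{\xi_{l(\alpha),j}\mid j\ge l(\alpha)\}$ and invokes the stabilizer description in one line), while you spell out the surjectivity and injectivity steps explicitly, but the content is the same.
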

\begin{proof}
	We know that $\xi_\alpha S\left( B^+ \right)$ is spanned by $\left\{\,
	\xi_{l\left( \alpha \right),j} \,\middle|\,  j\in I\left(n,r
	\right),\ j\ge l\left( \alpha \right) \right\}$. As $\xi_{l\left(
	\alpha\right), j
	} = \xi_{l\left( \alpha \right), i}$ if and only if $i\pi = j$,
	for some $\pi$ in the stabilizer of $l\left( \alpha \right)$ in
	$\Sigma_r$ and this stabilizer coincides with the row stabilizer of
	$T^\alpha$, the result follows.
\end{proof}
Fix $\lambda\in \Lambda\left( n,r \right)$ and consider the result of
the application of $(\cdot)^t$ to  \eqref{eq03}. Then
$
	P_1^t \cong
		\bigoplus\limits_{\nu=1}^{n-1} \xi_{\lambda\left( \nu,1 \right)}
		S\left( B^+ \right), $      or
	$P_1^t \cong	\bigoplus\limits_{\nu=1}^{n-1} \bigoplus\limits_{1\le p^{d_\nu} \le
		\lambda_{\nu+1}}\xi_{\lambda\left( \nu, p^{d_\nu} \right)} S\left( B^+ \right),$  according as  $\ch \K = 0$ , or $\ch \K
		=p.$   Thus a $\K$-basis of $P_1^t$ is given by

\begin{equation}
	\label{1.0}
	\begin{aligned}
		B_1 &:= \left\{\, \xi_{l\left( \nu,1 \right), j}
	\,\middle|\, j\in J\left( \lambda\left(\nu,1  \right)
	\right),\ \nu\in \mathbf{n-1} \right\}, &\mbox{if $\ch\K = 0$,}\\[3ex]
	B_2 &:= \left\{\,
	\xi_{l(\nu, p^{d'}), j}
	\,\middle|\,
	\begin{aligned}	
		&j\in J( \lambda( \nu, p^{d'})),\\[1ex]&
		1\le p^{d'}\le \lambda_{\nu+1},\ \nu\in \mathbf{n-1}
		\end{aligned}
	\right\},&
	\mbox{if $\ch \K = p$.}
	\end{aligned}
\end{equation}
With the above identifications of the projective modules, the map $p^t_1\colon
P_0^t\to P_1^t$ becomes $p_1^t\left( \eta \right)
 =
	 \sum\limits_{\nu=1}^{n-1} \xi_{l(\nu,1),l}\eta, $  or
$p_1^t\left( \eta \right)
 = \sum\limits_{\nu=1}^{n-1}\sum\limits_{1\le p^{d'}\le \lambda_{\nu+1}}
	 \xi_{l(\nu,p^{d'}),l} \eta, $ according as $\ch\K =
	 0,$ or $\ch\K = p.$

To construct an Auslander-Reiten sequence ending with $\K_\lambda$, it is convenient
to obtain, from $B_1$ and $B_2$, new bases for $P_1^t$ containing $p_1^t\left(
\xi_{l\left( \lambda \right),j} \right)$, $j\in J\left( \lambda \right)$.
Suppose $\lambda$ satisfies conditions
\begin{equation}
	\begin{cases}
		\lambda_n \not=0, &\mbox{if $\ch\K=0$,}\\
		\lambda_n\not=0,\ \lambda_{n-1} <  p^{d+1}-1, & \mbox{if
		$\ch\K=p$ and $p^d\le \lambda_n<p^{d+1}$.}
	\end{cases}
	\label{cond}
\end{equation}

Given $j\in J\left( \lambda \right)$, as $j\ge l=l\left( \lambda \right)$, the
$n$th row of $T^\lambda_j$ is constant with all entries equal to $n$, and its
$\left( n-1 \right)$st row has $a$ entries equal to $n$ and $\lambda_{n-1}-a$
entries equal to $n-1$, for some $0\le a\le \lambda_{n-1}$.

We shall look first at the case $\ch\K=0$. Then by Lemma~\ref{lemma2}
\begin{equation*}
	\xi_{l\left( n-1,1 \right),l}\xi_{l,j} = \left( a+1
	\right)\xi_{l\left( n-1,1 \right),j},
\end{equation*}
and $a+1\not=0$. Using this, we shall show that we can
replace $\xi_{l\left( n-1,1 \right),j}$ by $p_1^t\left( \xi_{l,j} \right)$ in
$B_1$ and obtain a new basis for $P_1^t$.
Notice that $J\left( \lambda \right) \subset J\left( \lambda\left( n-1,1
\right) \right)$, and so $\xi_{l\left( n-1,1 \right),j}\in B_1$. On the other
hand, $\xi_{l,j}\in P_0^t=\xi_\lambda S\left( B^+ \right)$, and
\begin{equation*}
	p_1^t\left( \xi_{l,j} \right) =
\xi_{l\left( n-1,1 \right),l}\xi_{l,j}+
	\sum_{\nu=1}^{n-2} \xi_{l\left( \nu,1 \right), l}\xi_{l,j}
	 = \left( a+1
	\right)\xi_{l\left( n-1,1 \right),j}+ \sum_{\nu=1}^{n-2}
	\xi_{l\left( \nu,1 \right),l} \xi_{l,j}  .
\end{equation*}
Now $\xi_{l\left( \nu,1 \right),l}\xi_{l,j}$ is a linear combination of basis
elements of the type $\xi_{l\left( \nu,1 \right),j\pi}$, for some $\pi\in
\Sigma_r$. As, for $\nu=1,\dots, n-2$, we have that $l\left( \nu,1
\right)\not\sim l\left( n-1,1 \right)$, we get that $\xi_{l\left( n-1,1
\right),j}$ is always different from $\xi_{l\left( \nu,1 \right),i}$, for any
$i\in I\left( n,r \right)$. Therefore, we can replace $\xi_{l\left( n-1,1
\right),j}$ in $B_1$ by $p_1^t\left( \xi_{l,j} \right)$ and still get a basis
for $P_1^t$. We have proved
the following:
\begin{proposition}
	\label{lemma2(2)}
	If $\ch\K=0$ and $\lambda_n\not=0$, then
	\begin{align*}
		\overline{ B }_1 =& \left\{\, \xi_{l\left( \nu,1 \right),j}
		\,\middle|\, j\in J\left( \lambda\left( \nu,1 \right) \right),
		\ \nu=1,\dots, n-2 \right\}\\ &\cup
		\left\{\, \xi_{l\left( n-1,1 \right),j} \,\middle|\, j\in
		J\left( \lambda\left( n-1,1 \right) \right)\setminus J\left(
		\lambda \right) \right\} \cup
		\left\{\, p_1^t\left( \xi_{l,j} \right) \,\middle|\, j\in
		J\left(\lambda  \right) \right\}
	\end{align*}
	is a basis of $P_1^t$. In particular, $p_1^t$
	is a monomorphism.
\end{proposition}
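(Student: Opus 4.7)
The plan is to show that $\overline{B}_1$ arises from the known basis $B_1$ of $P_1^t$ by an invertible change of basis. First I would note, using~\eqref{eq01}, that $J(\lambda)\subseteq J(\lambda(n-1,1))$, so the subset $B':=\{\xi_{l(n-1,1),j}:j\in J(\lambda)\}$ of $B_1$ is well defined and $\overline{B}_1=(B_1\setminus B')\cup B''$, where $B'':=\{p_1^t(\xi_{l,j}):j\in J(\lambda)\}$. It then suffices to check that the transition matrix sending the ordered disjoint union $(B_1\setminus B')\sqcup B'$ to $(B_1\setminus B')\sqcup B''$ is invertible.

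For each $j\in J(\lambda)$ I would expand $p_1^t(\xi_{l,j})=\sum_{\nu=1}^{n-1}\xi_{l(\nu,1),l}\xi_{l,j}$. Since $j\in J(\lambda)$, the $n$-th row of $T_j^\lambda$ is constant with all entries equal to $n$, so Lemma~\ref{lemma2} (with $\nu=n-1$, $m=1$) gives $\xi_{l(n-1,1),l}\xi_{l,j}=(a_j+1)\,\xi_{l(n-1,1),j}$, where $a_j$ is the number of $n$'s in the $(n-1)$-st row of $T_j^\lambda$; because $\ch\K=0$, the scalar $a_j+1$ is nonzero. For $\nu\in\{1,\dots,n-2\}$, formula~\eqref{0.0} expresses $\xi_{l(\nu,1),l}\xi_{l,j}$ as a $\K$-linear combination of basis elements $\xi_{l(\nu,1)\sigma,h}$; the first index of each such element lies in the $\Sigma_r$-orbit of $l(\nu,1)$, whose weight differs from that of $l(n-1,1)$, so none of these summands is proportional to any element of $\{\xi_{l(n-1,1),k}:k\in J(\lambda(n-1,1))\}$, and in particular to no element of $B'$.

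Combining these two facts, the expansion of $p_1^t(\xi_{l,j})$ in the basis $B_1$ has the form $(a_j+1)\xi_{l(n-1,1),j}$ plus a $\K$-linear combination of elements of $B_1\setminus B'$. The transition matrix is therefore block triangular: the identity block on the $(B_1\setminus B')$-part and a diagonal block with nonzero entries $a_j+1$ on the $B'$-part. Since it is invertible, $\overline{B}_1$ is a basis of $P_1^t$. The monomorphism assertion is then immediate: by Lemma~\ref{lemma21}, $\{\xi_{l,j}:j\in J(\lambda)\}$ is a basis of $P_0^t$, and $p_1^t$ sends it bijectively onto the linearly independent set $B''\subset\overline{B}_1$, hence $p_1^t$ is injective. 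The only genuinely delicate step, which I would want to write out carefully, is the weight-mismatch observation for $\nu\le n-2$: it is precisely this fact that keeps the change of basis strictly triangular and prevents $\xi_{l(n-1,1),k}$-contributions coming in from the ``wrong'' summands.
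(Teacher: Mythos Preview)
Your proof is correct and follows essentially the same approach as the paper: apply Lemma~\ref{lemma2} to the $\nu=n-1$ summand to get the nonzero coefficient $a_j+1$, and use the weight mismatch $l(\nu,1)\not\sim l(n-1,1)$ for $\nu\le n-2$ to see that no other summand contributes to the $\xi_{l(n-1,1),j}$-component, allowing a triangular replacement of $\xi_{l(n-1,1),j}$ by $p_1^t(\xi_{l,j})$ in $B_1$. Your framing via a block-triangular transition matrix is a clean repackaging of the same argument.
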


Suppose now that $\ch\K=p$ and that $\lambda$ satisfies condition~\eqref{cond}.
We will apply Lemma~\ref{lemma2}, together with the following well known consequence of Lucas' Theorem:
\begin{proposition}
	\label{lucas}
	Assume $m$ and $q$ are positive integers and that
	\begin{align*}
		m = m_0 + m_1 p + \dots + m_tp^t \,\,\,\ \mbox{and}\,\,\,\,
		q = q_0 + q_1 p + \dots + q_sp^s
	\end{align*}
	are the $p$-adic expansions of $m$ and $q$.
	Then $p$ divides $\binom{m}{q}$ if and only if $m_\nu<q_\nu$ for some
	$\nu$.
\end{proposition}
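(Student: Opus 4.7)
The plan is to derive this by working in the polynomial ring $\mathbb{F}_p[x]$ and exploiting the Frobenius identity $(1+x)^p = 1+x^p$. Padding the $p$-adic expansions with zeros we may assume $s=t$, and iterating Frobenius gives $(1+x)^{p^i} = 1 + x^{p^i}$ in $\mathbb{F}_p[x]$ for every $i$. Hence, using $m = \sum_{i=0}^{t} m_i p^i$,
\begin{equation*}
	(1+x)^m \;=\; \prod_{i=0}^{t}\bigl(1+x^{p^i}\bigr)^{m_i} \qquad \text{in } \mathbb{F}_p[x].
\end{equation*}

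Next, I would compare the coefficients of $x^q$ on the two sides. The left hand side contributes $\binom{m}{q}$ reduced mod $p$. On the right, expanding each factor as $(1+x^{p^i})^{m_i} = \sum_{k_i=0}^{m_i} \binom{m_i}{k_i} x^{k_i p^i}$, a monomial $x^q$ arises precisely from tuples $(k_0,\dots,k_t)$ with $0 \le k_i \le m_i$ and $\sum_i k_i p^i = q$. Since each $k_i < p$, this equation is the unique $p$-adic representation of $q$, forcing $k_i = q_i$ for all $i$. Therefore
\begin{equation*}
	\binom{m}{q} \;\equiv\; \prod_{i=0}^{t} \binom{m_i}{q_i} \pmod{p},
\end{equation*}
with the convention that $\binom{m_i}{q_i} = 0$ whenever $q_i > m_i$.

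Finally, to obtain the divisibility criterion: each factor $\binom{m_i}{q_i}$ is either zero (exactly when $q_i > m_i$) or a positive integer strictly less than $p$ (when $q_i \le m_i$, since $m_i < p$). Consequently the product vanishes in $\mathbb{F}_p$ if and only if at least one factor is zero, i.e.\ if and only if $m_\nu < q_\nu$ for some $\nu$. This is precisely the asserted equivalence. The only real subtlety is bookkeeping when $s \ne t$ (extending the shorter expansion by zero digits), but this is routine and the Frobenius identity carries the whole argument.
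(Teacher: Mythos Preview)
The paper does not actually prove this proposition; it is stated as a well-known consequence of Lucas' Theorem, without proof. Your argument is the standard generating-function derivation of Lucas' congruence and is essentially correct.

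There is one small slip in the final paragraph: you claim that when $q_i \le m_i$ the integer $\binom{m_i}{q_i}$ is strictly less than $p$. This is false already for $p=5$, $m_i=4$, $q_i=2$, where $\binom{4}{2}=6$. What you need, and what is true, is that $p \nmid \binom{m_i}{q_i}$ whenever $0 \le q_i \le m_i < p$; this holds because $\binom{m_i}{q_i}$ divides $m_i!$, which is a product of positive integers all smaller than $p$ and hence coprime to $p$. With that correction the argument is complete.
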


Given $j\in J\left( \lambda \right)$, we denote by $a=a\left( j \right)$ the
number of entries equal to $n$ in the $(n-1)$st row of $T^\lambda_j$.
Let $a_0$, \dots, $a_d$ be the coefficients in the $p$-adic expansion of
$a$, with $a_d$ possibly equal to zero. As $a\le \lambda_{n-1}<p^{d+1}-1$, and all the
coefficients in the $p$-adic expansion of $p^{d+1}-1$ are equal to $p-1$, we get
that there is some $a_t\not=p-1$. Let
\begin{equation}
\label{cond3.5}
	m\left( j \right) := \min\left\{\, t \,\middle|\, a_t<p-1 \right\}
\end{equation}
and define
\begin{equation*}
	J\left(\lambda, d' \right) = \left\{\, j\in J\left( \lambda \right)
	\,\middle|\, m\left( j \right) = d'
	\right\}.	
\end{equation*}
Obviously $J\left(\lambda  \right) = \dot\bigcup_{0\le d'\le d} J\left( \lambda,
d'
\right)$.
\begin{proposition}
	\label{lemma3}
	Suppose that $\ch\K=p$
	and that $\lambda\in \Lambda\left( n,r \right)$ satisfies
	condition~\eqref{cond}.
	Then
\begin{align*}
	\overline{ B }_2 = &
	\left\{\, \xi_{l\left( \nu,p^{d'} \right), j} \,\middle|\, j\in J(
	\lambda( \nu, p^{d'} ) ),\ 1\le p^{d'}\le
	\lambda_{\nu+1},\ \nu=1,\dots, n-2\right\}
	\\&\cup
	\left\{\, \xi_{l\left( n-1,p^{d'} \right),j} \,\middle|\, j\in J(
	\lambda( n-1,p^{d'} ) )\setminus J\left( \lambda,d'
	\right),\ 0\le d'\le d \right\} \\&\cup
	\left\{\, p_1^t\left( \xi_{l,j} \right) \,\middle|\, j\in J\left(
	\lambda
	\right) \right\}
\end{align*}
is a $\K$-basis for $P_1^t$. In particular, $p_1^t$ is a monomorphism.
\end{proposition}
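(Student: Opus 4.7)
The strategy mimics the proof of Proposition~\ref{lemma2(2)}: I will swap, for each $j \in J(\lambda)$, the element $\xi_{l(n-1,p^{d'}),j}$ of $B_2$ (with $d' = m(j)$) for $p_1^t(\xi_{l,j})$, and check that the result is still a basis. Since $p^{d'} \le p^d \le \lambda_n$ by hypothesis~\eqref{cond}, observation~\eqref{eq01} gives $J(\lambda) \subseteq J(\lambda(n-1,p^{d'}))$, so the element to be removed really does belong to $B_2$.

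The key computation is to expand $p_1^t(\xi_{l,j}) = \sum_{\nu=1}^{n-1} \sum_{1 \le p^{d''} \le \lambda_{\nu+1}} \xi_{l(\nu,p^{d''}),l}\,\xi_{l,j}$ and identify the coefficient of $\xi_{l(n-1,p^{d'}),j}$. Since $j \ge l$, the $n$th row of $T^\lambda_j$ is constant with entries $n$, so Lemma~\ref{lemma2} gives, for every admissible $d''$, the equality $\xi_{l(n-1,p^{d''}),l}\,\xi_{l,j} = \binom{a(j)+p^{d''}}{p^{d''}}\,\xi_{l(n-1,p^{d''}),j}$. For $\nu < n-1$, formula~\eqref{0.0} expresses $\xi_{l(\nu,p^{d''}),l}\xi_{l,j}$ as a $\K$-linear combination of basis elements whose first coordinate has the form $l(\nu,p^{d''})$ with $\nu<n-1$, which are disjoint from all elements flagged for removal. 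To see that the coefficient $\binom{a(j)+p^{d'}}{p^{d'}}$ is a unit of $\K$, write $a(j) = a_0 + a_1 p + \dots + a_d p^d$ in base~$p$; by definition of $d' = m(j)$ we have $a_t = p-1$ for $t < d'$ and $a_{d'} < p-1$, so adding $p^{d'}$ simply raises the $d'$th digit by one with no carry, and Lucas's theorem (Proposition~\ref{lucas}) yields $\binom{a(j)+p^{d'}}{p^{d'}} \equiv a_{d'}+1 \not\equiv 0 \pmod p$.

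Once this is established, I would order $B_2$ and $\overline{B}_2$ so that the untouched elements come first. The unchanged block contributes an identity, and for the replacement block I need to check that in the expansion of $p_1^t(\xi_{l,j})$ the only removed basis element appearing with nonzero coefficient is $\xi_{l(n-1,p^{d'}),j}$ itself. Removed elements all have first coordinate of the form $l(n-1,p^{d^*})$, so the terms with $\nu<n-1$ contribute nothing; the terms with $\nu=n-1$ all have second coordinate $j$; and the only removed element with second coordinate $j$ corresponds to $d^* = m(j) = d'$, since $j \in J(\lambda,d^*)$ forces $d^* = m(j)$. Hence the replacement block is diagonal with unit entries, the total change-of-basis matrix is block triangular and invertible, and $\overline{B}_2$ is a basis of $P_1^t$. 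The $\K$-linear independence of $\{p_1^t(\xi_{l,j})\}_{j\in J(\lambda)}$ then gives the injectivity of $p_1^t$. The main delicacy is the combinatorial bookkeeping in the last step, i.e. verifying that distinct removed elements cannot interfere with each other across the expansions of the $p_1^t(\xi_{l,j})$.
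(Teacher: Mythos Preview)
Your proof is correct and follows essentially the same route as the paper's: you use Lemma~\ref{lemma2} to compute the $\nu=n-1$ summands of $p_1^t(\xi_{l,j})$, invoke Lucas's theorem to see that the coefficient at $\xi_{l(n-1,p^{m(j)}),j}$ is a unit, and then argue that the replacement $\xi_{l(n-1,p^{m(j)}),j}\rightsquigarrow p_1^t(\xi_{l,j})$ is encoded by an invertible (block-triangular with diagonal replacement block) change-of-basis matrix. The paper carries out the same replacement, phrasing the independence check from the transposed viewpoint (for each removed element $\xi_{l(n-1,p^{m(j)}),j}$, it appears only in the expansion of $p_1^t(\xi_{l,j})$ and in no other $p_1^t(\xi_{l,h})$), but the two viewpoints are equivalent and both yield a diagonal replacement block.
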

\begin{proof}
Let $j\in J\left( \lambda \right)$. Just like in the characteristic zero case,
we consider the basis element $\xi_{l,j}$ of $P_0^t$ and look at
\begin{equation*}
	p_1^t\left( \xi_{l,j} \right) = \sum_{\nu=1}^{n-1}\sum_{1\le
	p^{d'}\le \lambda_{\nu+1}} \xi_{l\left( \nu, p^{d'} \right),l}
	\xi_{l,j}.
\end{equation*}
 By Lemma~\ref{lemma2} for any $d'$ such that $0\le p^{d'}\le \lambda_{n}$
we have
\begin{equation*}
	\xi_{l\left( n-1,p^{d'} \right),l} \xi_{l,j} =
	\binom{a + p^{d'}}{p^{d'}} \xi_{l\left( n-1,p^{d'}
	\right),j}.
\end{equation*}
Here $a$ is the number of times $n$ occurs in row $n-1$ of $T_j^{\lambda}$.
It follows from Proposition~\ref{lucas} and the definition of $m\left( j
\right)$ (see~\eqref{cond3.5} ), that
$p$ does not divide $\binom{a+p^{m\left( j \right)}}{p^{m\left( j \right)}}$
and divides all $\binom{a+p^{d'}}{p^{d'}}$ for $d'<m\left( j \right)$.
Therefore
\begin{equation}
	\label{1.4}
	p^t_1\left( \xi_{l,j} \right)
	= \sum_{\nu=1}^{n-2}\sum_{1\le p^{d'}\le \lambda_{\nu+1}} \xi_{l\left(
	\nu,p^{d'} \right),l}\xi_{l,j} +
	\sum_{p^{m(j)}\le p^{d'}\le \lambda_n}
	\binom{a+p^{d'}}{p^{d'}} \xi_{l\left( n-1,p^{d'} \right),j}
\end{equation}
and the coefficient of $\xi_{l\left( n-1,p^{m\left( j \right)} \right),j}$ in
this sum is non-zero. As, for $\nu\not=n-1$, we have $l\left( n-1,p^{m\left(
j \right)} \right)\not\sim l\left( \nu,p^{d'} \right)$ it follows that
$\xi_{l( n-1,p^{m\left( j \right)} ),j}$ does not appear in the
basis
expansion of $\xi_{l\left( \nu,p^{d'} \right),l}\xi_{l,h}$, for any $h\in
J\left( \lambda \right)$. Also, if
$d'\not=m\left( j \right)$, then $l\left( n-1,p^{d'} \right)\not\sim l\left(
n-1,p^{m\left( j \right)} \right)$ and so $\xi_{l\left( n-1,p^{d'}
\right),h}\not= \xi_{l\left( n-1,p^{m\left( j \right)} \right),j}$ for all
$h\in J\left(\lambda  \right)$.
Finally, suppose $h\in J\left( \lambda \right)$ satisfies
$
	(\, l( n-1,p^{m( j )} ),j \,)\sim (\,
	l(
	n-1,p^{m( h )}),h
\,).$
	Then $h=j\pi$ for some $\pi\in \Sigma_{\lambda\left( n-1,p^{m\left( j
\right)} \right)}$. But, since both $h$, $j\ge l$, we can not move any
entry $n-1$ in   row $(n-1)$ of $T^\lambda_j$ to row $n$ to obtain
$T^\lambda_h$. This implies that $\pi$ belongs to the row stabilizer of
$T^\lambda$. As both $T^\lambda_j$ and $T^\lambda_h$ are row semistandard
we get $h=j$.
Therefore $\xi_{l\left( n-1,p^{m\left( j \right)} \right),j}$ appears only once
in $\overline{ B }_2$: in the expression~\eqref{1.4} of $p_1^t\left(
\xi_{l,j}
\right)$ with the coefficient $\binom{a+p^{m\left( j \right)}}{p^{m\left( j
\right)}}$. Hence, we can replace $\xi_{l\left( n-1,p^{m\left( j \right)}
\right),j}$ by $p^t_1\left( \xi_{l,j} \right)$ in $B_2$ for all $j\in J\left(
\lambda \right)$ and still have a basis for $P_1^t$.
\end{proof}
It is now easy to obtain an Auslander-Reiten sequence ending with $\K_\lambda$ for
$\lambda$ satisfying~\eqref{cond}. Denote, respectively, by $B_1^*$ and $B_2^*$ the $\K$-basis
of $DP_1^t $ dual to $\overline{ B }_1$ and $\overline{ B }_2$.
For $j\in J\left( \lambda \right)$, we denote by $z_{l,j}$ the element in
$B_1^*$ (respectively, in $B_2^*$) that is dual to $p_1^t\left( \xi_{l,j}
\right)$.
Let $U_\lambda$ be the subspace  of $DP_1^t $
with $\K$-basis $B_1^*\setminus \left\{\, z_{l,j} \,\middle|\, j\in J\left(
\lambda
\right)\right\}$  if $\ch\K=0$ or $B_2^*\setminus \left\{\, z_{l,j}
\,\middle|\, j\in J\left( \lambda \right) \right\}$ if $\ch\K=p$. Then $U_{\lambda}$ is in fact a $S(B^+)$-submodule. Define
\begin{equation*}
	E\left( \lambda \right) = \left\{\, \left( z,c \right)\in D P_1^t
	\oplus \K_\lambda \,\middle|\,  z\in \left( U_\lambda + c
	z_{l,l} \right) \right\}.
\end{equation*}
Then we have the following result.
\begin{theorem}
	\label{prop1}
	Suppose that $\lambda\in \Lambda\left( n,r \right)$
	satisfies~\eqref{cond}.Then
	the sequence
	\begin{equation}
		\label{**}
		0\to U_\lambda \stackrel{f}{\longrightarrow} E\left( \lambda
		\right)
		\stackrel{g}{\longrightarrow} \K_\lambda \to 0,
	\end{equation}
where $f$ and $g$ are defined by $f\left( z \right) = \left( z,0 \right)$ and
	$g\left( z',c \right) = c,$ for all $ z\in U_\lambda, (z',c)\in E\left( \lambda \right),$ is an Auslander-Reiten sequence.
\end{theorem}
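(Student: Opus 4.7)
The plan is to identify~\eqref{**} with the Auslander-Reiten sequence obtained by pullback of the Nakayama-functored minimal projective resolution of $\K_\lambda$, as described in the setup above. Concretely this reduces to two verifications: (i) that $U_\lambda = \tau\K_\lambda = \Ker(Dp_1^t)$, and (ii) that $E(\lambda) = E(\theta)$ for some non-zero $S(B^+)$-homomorphism $\theta\colon \K_\lambda \to DP_0^t$. Once these are in place, the AR property follows from the general construction recalled at the start of this section.

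For~(i): Propositions~\ref{lemma2(2)} and~\ref{lemma3} state that $p_1^t$ is a monomorphism, so $Dp_1^t$ is surjective and $\Ker(Dp_1^t) \cong \tau\K_\lambda$. A functional on $P_1^t$ lies in $\Ker(Dp_1^t)$ iff it vanishes on $p_1^t(P_0^t)$. By Lemma~\ref{lemma21}, $P_0^t = \xi_\lambda S(B^+)$ has $\K$-basis $\{\xi_{l,j} : j \in J(\lambda)\}$, and by construction the elements $p_1^t(\xi_{l,j})$, $j \in J(\lambda)$, are part of the basis $\overline{ B }_1$ (respectively $\overline{ B }_2$) of $P_1^t$. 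Consequently $\Ker(Dp_1^t)$ is spanned by all dual basis elements other than the $z_{l,j}$'s, which is by definition $U_\lambda$.

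For~(ii): Set $\phi_{l,l} := Dp_1^t(z_{l,l}) \in DP_0^t$ and define $\theta\colon \K_\lambda \to DP_0^t$ by $\theta(c) := c\,\phi_{l,l}$. The dual-basis property gives $\phi_{l,l}(\xi_{l,k}) = z_{l,l}(p_1^t(\xi_{l,k})) = \delta_{k,l}$ for every $k\in J(\lambda)$, so $\phi_{l,l}\neq 0$. The set $P_0^t\cdot \rad(S(B^+)) = \xi_\lambda \rad(S(B^+))$, spanned using Observation~\ref{observation2}(1) by those $\xi_{l,k}$ with $k\in J(\lambda)\setminus\{l\}$ (each such $\xi_{l,k}$ satisfies $l<k$, hence lies in $\rad(S(B^+))$), is annihilated by $\phi_{l,l}$. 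Therefore $\phi_{l,l}$ lies in the socle of $DP_0^t$, which is simple and $\K_\lambda$-isomorphic, and $\theta$ is a well-defined non-zero module map. The identification
\[
E(\theta) = \{(z,c) : Dp_1^t(z) = c\,\phi_{l,l} = Dp_1^t(c z_{l,l})\} = \{(z,c) : z-cz_{l,l}\in U_\lambda\} = E(\lambda)
\]
shows that~\eqref{**} is the pullback of the Nakayama-functored resolution along the non-zero socle map $\theta$, and hence is the Auslander-Reiten sequence ending in $\K_\lambda$.

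The main obstacle is the socle check in~(ii): although conceptually routine, it requires spelling out the description of $\xi_\lambda\rad(S(B^+))$ via Observation~\ref{observation2}(1) and identifying it as the span of those basis elements of $P_0^t$ where $\phi_{l,l}$ vanishes. The rest is formal.
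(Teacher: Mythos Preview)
Your proof is correct and follows essentially the same route as the paper's. The only cosmetic difference is in how $\theta$ is introduced: the paper defines $\theta(c)(\eta)=\eta c$ directly, which is manifestly an $S(B^+)$-map, whereas you set $\phi_{l,l}=Dp_1^t(z_{l,l})$ and then verify via the socle description that $c\mapsto c\,\phi_{l,l}$ is a module map; since both functionals satisfy $\theta(1)(\xi_{l,k})=\delta_{k,l}$ these are the same $\theta$, and the identifications $U_\lambda=\Ker(Dp_1^t)$ and $E(\lambda)=E(\theta)$ proceed identically.
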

\begin{proof}
	Notice first that $\K_\lambda$
	is not projective, since $\lambda \not=\left( r,0,\dots,0 \right)$. Hence an Auslander-Reiten sequence ending with
	$\K_\lambda$ exists.
	
	Now we will prove the theorem in the case $\ch\K=0$. The case of
	$\ch\K=p$ is similar.

As we mentioned before, $DP_0^t $  has simple socle, isomorphic   to $\K_\lambda.$ Therefore, for any non-zero $\theta \in \Hom_{S(B^+)}(\K_\lambda,DP_0^t ),$ the sequence~\eqref{secondses} is an Auslander-Reiten sequence. We will consider $\theta$ defined by
\begin{equation*}
	\mbox{
	$\theta(c)(\eta) = \eta c,$
	for all $\eta \in P_0^t = \xi_{\lambda}S(B^+)$ and all $c\in
	\K_\lambda$.}
\end{equation*}
Note that as $P_0^t$ has $\K$-basis $\left\{\, \xi_{l,j} \,\middle|\,
	j\in J\left(\lambda  \right)
	\right\}$ and, for $j\in J\left( \lambda \right)$ and $c\in \K_\lambda$, $\xi_{l,j}c = c$ or $0$, according as $j=l$ or $j\not=l$,
	we have that $\theta$ is completely determined by saying that $\theta(c)(\xi_{l,j} ) = c$  if $j=l$, and 0 otherwise.
Given $z\in DP_1^t $ we can write $z$ as a linear combination of
the elements of $B_1^*$. Then for any $c\in \K_\lambda$, we have
$Dp_1^t(z) = \theta (c)$ if and only if $zp_1^t = \theta(c)$, which in turn
holds if and only if  for all $j\in J\left( \lambda \right)$  there holds $zp_1^t\left( \xi_{l,j} \right) = c$ if $j=l$, and 0 otherwise.
	Thus $z =c z_{l,l}+u$ for some $u\in U_\lambda$.
	Hence
	\begin{equation*}
		E\left( \theta \right) = \left\{\, (z,c)\in D P_1^t
		\oplus \K_\lambda \,\middle|\, Dp_1^t \left( z
		\right) = \theta(c)
		\right\} = E\left( \lambda \right).
	\end{equation*}
In a similar way, we see that $z\in\A\K_\lambda = \ker Dp_1^t  $
if and only if $zp_1^t = 0$, that is if and only if $z\in U_\lambda$. Therefore
$\A\K_\lambda = U_\lambda$.
\end{proof}
\begin{remark}\label{be} We have explained  that any non-zero homomorphism from the simple module $\K_{\lambda}$ into $DP_0^t$  gives an Auslander-Reiten
sequence. In particular if we replace $\theta$ by $c\theta$, where $c$ is a non-zero scalar, then this gives the same Auslander-Reiten sequence.
In fact we can say more. Recall the Auslander-Reiten formula. For any modules $X, Y$ of some algebra, we have
(see, for example, Theorem 2.20 in \cite{greenunpublished})
$${\rm Ext}^1(X, \tau Y) \cong D\underline{\rm Hom}(Y, X).
$$
Here $\underline{\rm Hom}(U, V)$ is the quotient space of ${\rm Hom}(U, V)$ modulo homomorphisms which factor through a projective module.
We apply this with $X=Y =\K_{\lambda}$.
Then the right hand side is trivially one-dimensional. Hence ${\rm Ext}^1(\K_{\lambda}, \tau\K_{\lambda})\cong \K$.
Therefore,  by the previous observation, if we have a non-split exact sequence with end terms $\K_\lambda$ and $\tau\K_{\lambda}$ this
must be an Auslander-Reiten sequence.
\end{remark}

We have constructed Auslander-Reiten sequences ending with $\K_\lambda $, for all $\lambda\in \Lambda\left( n,r \right)$ satisfying conditions~\eqref{cond}. For this we only need to deal with the multiplication of basis elements of $S\left( B^+ \right)$ where the formula in Lemma~\ref{lemma2} can be used. For $\lambda $  not satisfying~\eqref{cond}, the calculations for multiplication of basis elements get very tortuous, with many particular cases to consider,  and the method we use does not work well  in the construction of the desired sequences. Two more cases are studied: the case $n=2$ is completely treated in Section \ref{n2}, and in Section \ref{n3} we give an example for $n=3.$
 \section{The middle term of the Auslander-Reiten sequence}\label{newsection}

Given the Auslander-Reiten sequence  \eqref{**}, one would like to know
when the module $E(\lambda)$ is indecomposable. This seems to be a difficult question in general, as one can see for the Borel-Schur algebras
of finite type  (see Section \ref{finitetype}). In fact  one of the motivations for our classification
was to get a better understanding of this question.
We have two easy observations, which deal with most of the cases when the algebra has finite type. The first one involves the indecomposibility of the module $P_1$. Notice that  $P_1$ is indecomposable
if and only if $\lambda=(\lambda_1, 0, \cdots,0, \lambda_{\nu },0, \cdots,0)$, for some $2 \leq\nu \leq n $,  $\lambda_{\nu }\geq 1$, if $\ch\K=0$, and $1 \leq\lambda_{\nu }< p$, if $\ch\K=p$.
\begin{proposition} Given $\lambda \in \Lambda\left( n,r \right)$,
assume  the module $P_1$ is indecomposable.  Then the
middle term $E(\lambda)$ is indecomposable.
\end{proposition}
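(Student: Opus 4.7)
The plan is to embed $E(\lambda)$ as an $S(B^+)$-submodule of $DP_1^t$ and then exploit the fact that, when $P_1$ is indecomposable, the ambient module $DP_1^t$ is an indecomposable injective module, hence has a simple socle.

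First I would observe that $P_1$ indecomposable means $P_1 = S(B^+)e$ for a primitive idempotent $e$, so $P_1^t \cong eS(B^+)$ is an indecomposable projective right $S(B^+)$-module. The Nakayama functor $D(\cdot)^t$, already discussed before Theorem~\ref{prop1}, then sends $P_1$ to an indecomposable injective left $S(B^+)$-module $DP_1^t$, which automatically has simple socle (any indecomposable injective over a finite-dimensional algebra does).

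Next I would check that the projection onto the first factor $\pi_1 \colon E(\lambda) \to DP_1^t$, $(z,c) \mapsto z$, is an injective homomorphism of $S(B^+)$-modules. A pair $(0,c)$ lies in $E(\lambda)$ precisely when $c\,z_{l,l} \in U_\lambda$. Since $l \in J(\lambda)$, the vector $z_{l,l}$ is one of the dual basis vectors deliberately excluded when forming $U_\lambda$ in the paragraph preceding Theorem~\ref{prop1}. Hence $z_{l,l} \notin U_\lambda$, which forces $c = 0$ and so $\ker \pi_1 = 0$.

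Combining these two facts, $E(\lambda)$ is a nonzero submodule of a module with simple socle, so it itself has simple socle, and a finite-dimensional module with simple socle is necessarily indecomposable. I do not foresee a real obstacle; the entire argument reduces to the bookkeeping of which basis vectors of $DP_1^t$ were removed to form $U_\lambda$, and that is transparent from the construction.
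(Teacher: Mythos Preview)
Your proof is correct and follows essentially the same route as the paper's: both embed $E(\lambda)$ into the indecomposable injective $DP_1^t$ via the first-coordinate projection and deduce indecomposability from the simplicity of its socle. The only difference is cosmetic: the paper obtains injectivity of this embedding by applying the Snake Lemma to the pullback diagram (using that $\theta\colon\K_\lambda\hookrightarrow DP_0^t$ is injective), whereas you read it off directly from the fact that $z_{l,l}\notin U_\lambda$.
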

\begin{proof}
We construct $E(\lambda)$ as a pullback, and hence we have a commutative diagram with exact rows
\begin{equation}\label{diagramK}
\CD  0@>>> \tau \K_{\lambda} @>>> E(\lambda) @>>> \K_{\lambda} @>>> 0 \cr
&&   @V{1}VV  @V{\tilde{\theta}}VV @V{\theta}VV \cr
0@>>> \tau \K_{\lambda} @>>> DP_1^t @>{Dp_1^t}>> DP_0^t
\endCD
\end{equation}
By the Snake Lemma, the map $\tilde{\theta}$ is injective. Since $P_1$ is indecomposable, the module $DP_1^t$ is indecomposable
injective and hence has a simple socle. Therefore the socle of $E(\lambda)$ is simple, and the module is
indecomposable.
\end{proof}
  Note that  if $n=2$ and $\ch\K=0$, then $P_1$ is always indecomposable. Therefore we have the following result.
\begin{corollary} If  $n=2$ and $\ch\K=0$, then  the
middle term $E(\lambda)$ is always indecomposable.
\end{corollary}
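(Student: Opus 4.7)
The plan is to deduce this directly from the preceding Proposition. I would first check that when $n=2$ and $\ch\K=0$, the module $P_1$ is automatically indecomposable for every $\lambda\in \Lambda(2,r)$ for which the Auslander-Reiten sequence \eqref{**} is defined.

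Concretely: since $\K_\lambda$ is non-projective (as required for an Auslander-Reiten sequence to exist), we have $\lambda\ne (r,0)$, so $\lambda=(\lambda_1,\lambda_2)$ with $\lambda_2\ne 0$. Note that this is exactly condition \eqref{cond} in characteristic zero. Since $\mathbf{n-1}=\{1\}$ is a singleton, the definition of $P_1$ in characteristic zero collapses to
\begin{equation*}
	P_1 = S(B^+)\xi_{\lambda(1,1)},
\end{equation*}
a single module of the form $S(B^+)\xi_\mu$. Such a module is the projective cover of the simple module $\K_\mu$ (by the basic facts recalled at the end of Section~\ref{notation}), hence is indecomposable.

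Having verified the hypothesis, the Proposition immediately gives that the middle term $E(\lambda)$ is indecomposable. There is no real obstacle here: the corollary is just the observation that for $n=2$ there is only one summand available in the definition of $P_1$ in characteristic zero, so the indecomposability hypothesis of the Proposition is always satisfied.
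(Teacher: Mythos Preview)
Your proposal is correct and matches the paper's argument exactly: the paper simply observes that when $n=2$ and $\ch\K=0$ the module $P_1$ is always indecomposable (precisely because the index set $\mathbf{n-1}$ is a singleton, so $P_1=S(B^+)\xi_{\lambda(1,1)}$), and then invokes the preceding Proposition.
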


We can also identify from \eqref{diagramK} the Auslander-Reiten sequence for $\K_{\lambda}$ when $\lambda = (0, 0, \ldots, r)$. This is the unique simple module which is
injective. It follows that $DP_0^t\cong \K_{\lambda}$ and the map $\theta$ is an isomorphism. In this case the
map $Dp_1^t$ must be onto, and then the  Auslander-Reiten sequence is equivalent to the exact sequence which is the bottom row of the diagram  \eqref{diagramK}.
\section{The case $n=2$}
\label{n2}
In this section we study the construction of an Auslander-Reiten sequence ending with
$\K_\lambda$ in the particular case of $n=2$. We will show that it is very easy to
obtain such sequences with no restriction on $\lambda$ or the characteristic of
$\K$.

Let $\lambda=\left( \lambda_1,\lambda_2 \right)$. Since $\K_\lambda$ is
non-projective if and only if $\lambda_2\not=0$, all compositions we are
interested in satisfy this condition. So in this section we assume that
$\lambda_2\not=0$. In particular, the construction of Auslander-Reiten sequences in the
characteristic zero and $n=2$ case is completely answered in
Theorem~\ref{prop1}.

Suppose now that $\ch \K = p$ and $d$ is such that $p^d \le
\lambda_2<p^{d+1}$. Given $j\in J\left( \lambda \right)$, recall that $a\left(
j \right)$ is the number of $2$'s in the first row of $T^\lambda_j$. If
\begin{equation}
	\label{2.1}
	a = a(j) = (p-1) + (p-1)p + \dots + (p-1)p^d + \dots
\end{equation}
is the $p$-adic expansion of $a$ then, by Proposition~\ref{lucas}, for all
$0\le d'\le d$ the binomial coefficient $\binom{a + p^{d'}}{p^{d'}}$ is
divisible by $p$.  Hence
\begin{equation*}
	p_1^t \left( \xi_{l,j} \right) =
	\sum_{ d' =0}^d \xi_{l\left( 1,p^{d'} \right),l} \xi_{l,j} =
	\sum_{d'=0}^d \binom{a+p^{d'}}{p^{d'}} \xi_{l\left( 1,p^{d'} \right),j}
	= 0.
\end{equation*}
Next we suppose that $a=a\left( j \right)$ has $p$-adic expansion $	a = a_0 + a_1 p + \dots + a_s p^s,$
with $a_t\not=p-1$ for some $0\le t\le d$. Define $m(j) = \min\left\{\, t
\,\middle|\, t\le d \mbox{ and } a_t<p-1 \right\}$ and
\begin{equation*}
	\hat{J}(\lambda) = \left\{\, j\in J\left( \lambda \right) \,\middle|\,
	a(j) \not= (p-1) + (p-1)p + \dots + (p-1)p^d + \dots \right\}.
\end{equation*}
For $0\le d'\le d$ we denote by $\hat{J}(\lambda,d')$ the subset of those
$j\in \hat{J}(\lambda)$ such that $m(j) = d'$. Then $\hat{J}(\lambda) =
\dot\bigcup_{0\le d'\le d} \hat{J}(\lambda,d')$ and $\hat{J}\left( \lambda,d'
\right)\subset J(\lambda(1,p^{d'}))$. Now with a proof completely analogous to
the proof of Proposition~\ref{lemma3}, we see that, for $j\in \hat{J}(\lambda,d')$, the
element $\xi_{l(1,p^{m(j)}),j}$ in $B_2$ can be replaced by
$p_1^t(\xi_{l,j})$ and the resulting set $\overline{ B }_2$ is a new basis
for $P_1^t$. This proves the following result.
\begin{proposition}
	\label{lemma4}
Suppose that $\ch\K= p$ and $\lambda=\left( \lambda_1,\lambda_2 \right)$, with
$\lambda_2\not=0$. Then
\begin{align*}
	\overline{ B }_2=&
	\left\{\, \xi_{l\left( 1,p^{d'} \right),j} \,\middle|\, j\in
	J(\lambda(1,p^{d'}))\setminus \hat{J}(\lambda,d'),\ 0\le d'\le d \right\}
	\\&\cup
	\left\{\, p_1^t\left( \xi_{l,j} \right) \,\middle|\, j\in \hat{J}\left(
	\lambda \right)\right\}
\end{align*}
is a $\K$-basis for $P_1^t$.

We also have that
 $\left\{\, \xi_{l,j} \,\middle|\, j\in J\left( \lambda
\right)\setminus \hat{J}(\lambda) \right\}$ is a $\K$-basis for $\ker (p^t_1)$.
In particular, $p^t_1$ is injective if and only if $p^d \le
\lambda_2<p^{d+1}$ and $\lambda_1< p^{d+1}-1$, i.e., if and only if $\lambda$ satisfies condition \ref{cond}.
\end{proposition}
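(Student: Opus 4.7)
The plan is to mimic the proof of Proposition~\ref{lemma3} in the simpler $n=2$ setting, but now without the restriction~\eqref{cond}. The starting point is that for $n=2$ every $j\in J(\lambda)$ has the second row of $T^\lambda_j$ constant (equal to $2$), so the hypothesis of Lemma~\ref{lemma2} holds automatically and, writing $a=a(j)$, one obtains
\begin{equation*}
  p_1^t(\xi_{l,j}) = \sum_{d'=0}^{d} \binom{a+p^{d'}}{p^{d'}} \xi_{l(1,p^{d'}),j}.
\end{equation*}

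To show that $\overline{B}_2$ is a basis of $P_1^t$, I would, for each $j\in\hat{J}(\lambda,d')$, substitute the element $\xi_{l(1,p^{d'}),j}$ of $B_2$ by $p_1^t(\xi_{l,j})$. By Proposition~\ref{lucas} and the definition of $m(j)=d'$, the coefficient of $\xi_{l(1,p^{d'}),j}$ in this image is a unit in $\K$, while the coefficients of $\xi_{l(1,p^{d''}),j}$ with $d''<d'$ vanish. To conclude that this change of basis is triangular with units on the diagonal, I would verify, exactly as in Proposition~\ref{lemma3}, that $\xi_{l(1,p^{d'}),j}$ does not appear in any other expansion $p_1^t(\xi_{l,h})$ with $h\in\hat{J}(\lambda)$. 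This reduces to showing that $(l(1,p^{d'}),j)\sim(l(1,p^{d''}),h)$ forces $d''=d'$ (already implied by $l(1,p^{d'})\sim l(1,p^{d''})$) and then $h=j$ by row-semistandardness, since no permutation in $\Sigma_{\lambda(1,p^{d'})}$ can move a $1$ from row~$1$ into row~$2$ while keeping $h\ge l$.

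For the kernel statement, observe that for $j\in J(\lambda)\setminus\hat{J}(\lambda)$ every digit $a_t$ with $0\le t\le d$ equals $p-1$, so by Proposition~\ref{lucas} \emph{every} coefficient in the displayed sum vanishes in $\K$, whence $p_1^t(\xi_{l,j})=0$. The elements $\{\xi_{l,j}:j\in J(\lambda)\setminus\hat{J}(\lambda)\}$ are linearly independent as part of the basis of $P_0^t$ furnished by Lemma~\ref{lemma21}, and they lie in $\ker p_1^t$. Since the image of $p_1^t$ is spanned by the $|\hat{J}(\lambda)|$ independent vectors $p_1^t(\xi_{l,j})$ with $j\in\hat{J}(\lambda)$ that sit inside $\overline{B}_2$, a rank--nullity count forces these elements to span $\ker p_1^t$.

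The injectivity characterisation then reduces to identifying when $J(\lambda)=\hat{J}(\lambda)$. The smallest $a$ whose $p$-adic expansion has $a_t=p-1$ for all $0\le t\le d$ is $(p-1)(1+p+\cdots+p^d)=p^{d+1}-1$, and for $n=2$ every integer $0\le a\le\lambda_1$ is realised as $a(j)$ for a unique $j\in J(\lambda)$; hence no such $j$ exists iff $\lambda_1<p^{d+1}-1$, which together with $\lambda_2\ne 0$ is precisely condition~\eqref{cond}. The only non-routine step is the triangular-substitution argument, but the orbit combinatorics simplify substantially for $n=2$, so this should go through directly.
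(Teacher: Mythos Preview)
Your proposal is correct and follows essentially the same approach as the paper, which simply states that the argument is ``completely analogous to the proof of Proposition~\ref{lemma3}'' and relies on the discussion preceding the statement for the vanishing of $p_1^t(\xi_{l,j})$ when $j\notin\hat J(\lambda)$. Your rank--nullity argument for the exact description of $\ker p_1^t$ and your explicit identification of when $J(\lambda)=\hat J(\lambda)$ are a bit more detailed than the paper, but the underlying ideas are the same.
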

Denote by $B_2^*$ the basis of $D P_1^t $ dual to $\overline{ B
}_2$. We write $z_{l,j}$ for the element dual to $p_1^t(\xi_{l,j})$, where
$j\in \hat{J}(\lambda)$. Let $U_\lambda$ be the $S(B^+)$-submodule of
$DP_1^t$ with $\K$-basis $B_2^*\setminus \left\{\, z_{l,j} \,\middle|\, j\in
\hat{J}(\lambda)
\right\}$ and
\begin{equation*}
	E\left( \lambda \right) = \left\{\, \left( z,c \right)\in D P_1^t
	\oplus \K_\lambda \,\middle|\, z\in (U_\lambda + cz_{l,l})
	\right\}.
\end{equation*}
Then, adapting the proof of Theorem~\ref{prop1}, we can conclude the
following result.
\begin{theorem}
	\label{thmn2}
Suppose that $\ch\K=p$ and $\lambda=\left( \lambda_1,\lambda_2 \right)$, with
$\lambda_2\not=0$. Then the sequence
\begin{equation*}
	0\to U_\lambda \stackrel{f}{\longrightarrow} E\left( \lambda \right)
	\stackrel{g}{\longrightarrow} \K_\lambda\to 0,
\end{equation*}
where $f$ and $g$ are defined by $f\left( z \right) = \left( z,0 \right)$ and
	$g\left( z',c \right) = c,$ for all $ z\in U_\lambda, (z',c)\in E\left( \lambda \right),$ is an Auslander-Reiten sequence.
\end{theorem}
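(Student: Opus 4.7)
The plan is to adapt the proof of Theorem~\ref{prop1} almost verbatim, replacing the basis from Proposition~\ref{lemma3} by the basis $\overline{B}_2$ of Proposition~\ref{lemma4}. First I would observe that $\K_\lambda$ is non-projective (since $\lambda_2\ne 0$, so $\lambda\ne(r,0)$) and that $DP_0^t$ still has simple socle isomorphic to $\K_\lambda$, so that any non-zero homomorphism $\theta\colon \K_\lambda\to DP_0^t$ yields an Auslander-Reiten sequence via the pullback of~\eqref{firstses} along $\theta$.

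Next, exactly as in Theorem~\ref{prop1}, I would choose $\theta$ defined by $\theta(c)(\eta)=\eta c$ for $\eta\in P_0^t=\xi_\lambda S(B^+)$, so that $\theta(c)(\xi_{l,j})=c$ if $j=l$ and $0$ otherwise. Writing an arbitrary $z\in DP_1^t$ in the dual basis $B_2^*$, the equation $Dp_1^t(z)=\theta(c)$ becomes the family of equations $z(p_1^t(\xi_{l,j}))=\theta(c)(\xi_{l,j})$, one for each $j\in J(\lambda)$.

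The one new feature compared with Theorem~\ref{prop1} is that $p_1^t$ need not be injective: by Proposition~\ref{lemma4}, $p_1^t(\xi_{l,j})=0$ exactly when $j\in J(\lambda)\setminus\hat{J}(\lambda)$. I would dispose of those $j$ by noting that $l\in\hat{J}(\lambda)$ (because $a(l)=0$ fails to begin with the forbidden run of $(p-1)$-digits), so that for such $j$ one has $j\ne l$ and the equation reduces to $0=0$; no constraint on $z$ is imposed. For $j\in\hat{J}(\lambda)$, the element $p_1^t(\xi_{l,j})$ is a member of $\overline{B}_2$, hence $z(p_1^t(\xi_{l,j}))$ is just the coefficient of $z_{l,j}$ in the expansion of $z$; the equations then demand that this coefficient equal $c$ when $j=l$ and $0$ when $j\ne l$, while the coefficients of $z$ on $B_2^*\setminus\{z_{l,j}:j\in\hat{J}(\lambda)\}$ remain free.

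Combining these, $(z,c)\in E(\theta)$ if and only if $z\in U_\lambda+c\,z_{l,l}$, which identifies $E(\theta)$ with $E(\lambda)$. Taking $c=0$ in the same analysis gives $\A\K_\lambda=\ker Dp_1^t=U_\lambda$, which also verifies in passing that $U_\lambda$ is indeed an $S(B^+)$-submodule. Thus the short exact sequence \eqref{secondses} coincides with the one in the statement, and is an Auslander-Reiten sequence. The only real obstacle is keeping track of the two disjoint classes $\hat{J}(\lambda)$ and $J(\lambda)\setminus\hat{J}(\lambda)$ simultaneously in the bookkeeping; this is resolved cleanly by the observation that $l\in\hat{J}(\lambda)$, which makes the constraints at the new class $J(\lambda)\setminus\hat{J}(\lambda)$ vacuous.
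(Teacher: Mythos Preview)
Your proposal is correct and follows exactly the approach the paper intends: the paper's own proof is simply the sentence ``adapting the proof of Theorem~\ref{prop1}, we can conclude the following result,'' and you have carried out that adaptation, using Proposition~\ref{lemma4} in place of Proposition~\ref{lemma3}. Your explicit handling of the new feature---that $p_1^t$ may have a kernel---via the observation $l\in\hat{J}(\lambda)$ (since $a(l)=0$) is precisely the extra check needed, and is more detailed than what the paper itself records.
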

\section{Some results in the case $n=3$}
\label{n3}
We will consider fields of characteristic $p$, $n=3$, and $\lambda\in
\Lambda(3,r)$ with $\lambda_3\not=0$.  Define $d$ by $p^d\le
\lambda_3<p^{d+1}$. If $\lambda_2<p^{d+1}-1$, we know from
Theorem~\ref{prop1} an Auslander-Reiten sequence ending with $\K_\lambda$. In
this section we study the construction of Auslander-Reiten sequences for $\lambda$
with $\lambda_2 = 2p^{d+1} -1$.

In this case $p_1^t$ may not be injective. Our first step will be again to
determine a basis for $P_1^t$, which contains a basis of $\im(p_1^t) $.
Recall that
\begin{align*}
	B_2 = & \left\{\, \xi_{l\left( \nu,p^{d'} \right),j} \,\middle|\, j\in
	J\left( \lambda(\nu, p^{d'}) \right),\ 1\le p^{d'}\le \lambda_{\nu+1},\
	\nu=1,2\right\}	
	\label{4.0}
\end{align*}
is a $\K$-basis of $P_1^t$. We will end this section by explaining how to
replace some of these elements $\xi_{l(\nu,p^{d'}),j}$ by elements of the form
$p_1^t(\xi_{l,h})$ and obtain a new basis of~$P_1^t$. The construction of the Auslander-Reiten sequence ending with
$\K_\lambda$ is then similar to the one in the previous sections.

Given $j\in J\left( \lambda \right)$, suppose that the number of entries
equal to $3$ in the second row of $T^\lambda_j$ is $a=a\left( j \right)$. Let
$a_0$, \dots, $a_{d+1}$ be the coefficients of the $p$-adic expansion of $a$.
If $a_t\not=p-1$, for some $t\le d$, let $m=m\left( j \right) = \min\left\{\,
t \,\middle|\, a_t<p-1 \right\}$. Then
\begin{equation*}
	p_1^t\left( \xi_{l,j} \right) =
	\sum_{d'=0}^{d+1} \xi_{l\left( 1,p^{d'} \right),l}\xi_{l,j} +
	\sum_{d'=m(j)}^{d} \binom{a+p^{d'}}{p^{d'}} \xi_{l\left( 2,p^{d'}
	\right),j}
\end{equation*}
and $p$ does not divide $\binom{a+p^{m(j)}}{p^{m(j)}}$. Now, like in the proof
of Proposition~\ref{lemma2(2)}, it is simple to see that if we replace
$\xi_{l(2,p^{m(j)}),j}$ by $p_1^t(\xi_{l,j})$ in $B_2$ for all $j$'s satisfying
these conditions, we obtain a new basis $B_2'$ for $P_1^t$.

The problem arises when the $p$-adic expansion of $a$ is
\begin{equation}
	a = (p-1) + (p-1)p + \dots + (p-1)p^d + cp^{d+1},\ c=0,1.
	\label{4.1}
\end{equation}
If $a=a(j)$ satisfies \eqref{4.1}, we say that $j$ is a \emph{critical} element of
$J\left( \lambda \right)$. In this case we have
\begin{equation*}
	p_1^t\left( \xi_{l,j} \right) = \sum_{d'=0}^{d+1}\xi_{l\left(
	1,p^{d'}
	\right),l}\xi_{l,j}.
\end{equation*}
If $c=0$, then the second row of $T^\lambda_j$ is not constant and we can not
use the multiplication formula in Lemma~\ref{lemma2} to calculate
$\xi_{l(1,p^{d'}),l}\xi_{l,j}$. We will use a different version of the
multiplication formula \eqref{0.0} to study these products (see
\cite[(2.7)]{green1}).
Given $i$, $j$, $k\in I\left( 3,r \right)$
the double cosets $\Sigma_{i,j} \sigma \Sigma_{j,k}$ in $\Sigma_j$ correspond
one-to-one to the $\Sigma_{j,k}$-orbits of $i\Sigma_j$. So \eqref{0.0} becomes
\begin{equation}
	\xi_{i,j}\xi_{j,k} = \sum_{h}\left[ \Sigma_{h,k}:\Sigma_{h,j,k}
	\right]\xi_{h,k},
	\label{4.2}
\end{equation}
where the sum is over a transversal $\left\{ h \right\}$ of the
$\Sigma_{j,k}$-orbits in the set $i\Sigma_j$. Now we fix a critical $j\in J\left( \lambda
\right)$  such that $a=a\left( j \right) = p^{d+1}-1$. Suppose that the number
of entries equal to $2$ and the number of entries equal to $3$ in the first row
of $T^\lambda_j$ are $t_2 = t_2\left(
j \right)$ and  $t_3 = t_3\left( j \right)$, respectively.
Thus we have
\begin{equation}
T_j^\lambda=
\raisebox{+1.8ex}{
\ensuremath{
\begin{array}{l}
	1\dots 1 \overbrace{2\dots\dots 2}^{t_2} \overbrace{3\dots\dots \dots
	3}^{t_3}\\[3ex]
	2\dots\dots 2\underbrace{3\dots\dots\dots\dots\dots\dots 3}_a\\[2ex]
	3\dots\dots\dots\dots\dots 3
\end{array}}}
	\label{4.3}
\end{equation}
Applying \eqref{4.2} in the case of our composition, we obtain
\begin{equation}
	\xi_{l\left( 1,p^{d'} \right),l}\xi_{l,j} = \sum_{h}
	\binom{t_2 + s}{s} \binom{t_3 + t}{t} \xi_{h,j}
	\label{4.4}
\end{equation}
where
\begin{equation*}
	h = ( \underbrace{1,\dots,1}_{\lambda_1+s},
	\underbrace{2,\dots,2}_{\lambda_2-a-s}, \underbrace{1,\dots,1}_t,
	\underbrace{2,\dots,2}_{a-t}, \underbrace{3,\dots,3}_{\lambda_3})
\end{equation*}
and $s+t =p^{d'}$, $t<p^{d+1}$.  Note that all the $\xi_{h,j}$ in
\eqref{4.4} are distinct.
\begin{remark}
	\label{4.5}
	Suppose that $j_1$ and $j_2$ are critical elements of
$J\left( \lambda \right)$. Then $\left( h,j_1 \right)\sim \left( h',j_2
\right)$ implies $j_1\sim j_2$. Hence if $j_1\not\sim j_2$ all the basis
elements $\xi_{h,j_1}$ appearing in the $B_2$-expansion of $p_1^t\left( \xi_{l,j_1}
\right)$ are distinct from those appearing in the $B_2$-expansion of
$p_1^t\left( \xi_{l,j_2} \right)$.
\end{remark}
Thus given $j$ defined by \eqref{4.3}, we only have to study the linear
independence of $\left\{ p_1^t\left( \xi_{l,j}\right),\ p_1^t\left( \xi_{l,j'} \right)
 \right\}$, where $j'$ is a critical element of $J\left( \lambda \right)$
and $j'\sim j$.
Hence
\begin{equation}
	\label{4.6}
	T^\lambda_{j'} =
	\raisebox{1.7ex}{
	$\begin{array}{l}
		1\dots 1 \overbrace{2\dots\dots\dots 2}^{t_2 + p^{d+1}}
		\overbrace{3\dots\dots 3}^{t_3 -p^{d+1}}\\
		3\dots \dots \dots \dots \dots 3\\
		3\dots \dots \dots 3
	\end{array}$
	}.
\end{equation}
Note that if $t_3<p^{d+1}$, then $j'$ and $p_1^t\left( \xi_{l,j'} \right)$ are not
defined. Thus we will assume that $t_3\ge p^{d+1}$.

Recall that, from Lemma~\eqref{lemma2}, we have
\begin{equation}
	\label{4.7}
	p_1^t(\xi_{l,j'}) = \sum_{0\le d'\le d+1}
	\binom{t_3 -p^{d+1}+p^{d'}}{p^{d'}}\xi_{l\left( 1,p^{d'} \right),j'}.
\end{equation}
Before we proceed we need a technical result. Its proof is an easy consequence
of Proposition~\ref{lucas}.
\begin{lemma}
	\label{prop4.8}
	Let $0\le m\le d+1$. Then $p$ divides  all the products
	$\binom{t_2+s}{s}\binom{t_3+t}{t}$ with  $s+t = p^{d'}$ and $0\le d'\le
	m$
	if and only if the $p$-adic expansions of $t_2$ and $t_3$ have the form
	\begin{equation}
		\label{t2t3}
	\begin{aligned}
		t_2& = (p-1) + (p-1)p + \dots+  (p-1)p^m + c'_{m+1} p^{m+1} +
		\dots \\
		t_3 &= (p-1) + (p-1) p + \dots + (p-1) p^m + c''_{m+1}
		p^{m+1} + \dots.
	\end{aligned}
\end{equation}
\end{lemma}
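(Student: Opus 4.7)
The plan is to derive both directions from Proposition~\ref{lucas}, applied separately to $\binom{t_2+s}{s}$ and $\binom{t_3+t}{t}$. Throughout, write the $p$-adic digits of $t_2$ and $t_3$ as $(t_2)_i$, $(t_3)_i$.

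For the necessity direction, I would exploit two extremal choices of $(s,t)$ with $s+t=p^{d'}$. Taking $s=p^{d'}$, $t=0$ makes the second binomial equal $1$, so the hypothesis forces $p\mid\binom{t_2+p^{d'}}{p^{d'}}$. By Lucas' Theorem, this happens iff the $d'$-th digit of $t_2+p^{d'}$ is strictly less than the $d'$-th digit of $p^{d'}$ (which is $1$), i.e.\ iff adding $p^{d'}$ produces a carry at position $d'$, which is equivalent to $(t_2)_{d'}=p-1$. Symmetrically, taking $s=0$, $t=p^{d'}$ forces $(t_3)_{d'}=p-1$. Running $d'$ from $0$ to $m$ yields exactly the form \eqref{t2t3}.

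For the sufficiency direction, assume $t_2$ and $t_3$ have their first $m+1$ digits equal to $p-1$. Fix $0\le d'\le m$ and a decomposition $s+t=p^{d'}$; since $p^{d'}\ge 1$, at least one of $s,t$ is nonzero, and I claim that whichever is nonzero produces a divisibility. Suppose $s\ne 0$ and let $i$ be the smallest index with $s_i\ne 0$; then $i\le d'\le m$, so $(t_2)_i=p-1$, and adding $s_i\ge 1$ at position $i$ forces a carry. Hence $\binom{t_2+s}{s}$ has positive $p$-adic valuation, by the carry interpretation of Lucas' Theorem (or equivalently Kummer's Theorem). The same argument with the roles of $s,t_2$ and $t,t_3$ swapped handles the case $t\ne 0$. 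In either case the product $\binom{t_2+s}{s}\binom{t_3+t}{t}$ is divisible by $p$.

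The argument is almost entirely bookkeeping once Lucas' Theorem is in hand; the only mildly subtle point is recognizing that the ``$p-1$ up to position $m$'' condition is precisely what is pinned down by the extreme choices $s=p^{d'}$ or $t=p^{d'}$, and that no intermediate split $(s,t)$ imposes any further constraint because the carry at the lowest nonzero digit of $s$ (resp.\ $t$) is automatic under the hypothesis. No further obstacle is expected.
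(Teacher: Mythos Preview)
Your argument is correct and essentially identical to the paper's proof: both directions rest on Proposition~\ref{lucas}, the necessity is obtained from the extremal choices $(s,t)=(p^{d'},0)$ and $(0,p^{d'})$, and sufficiency comes from locating the lowest nonzero $p$-adic digit of $s$ (or $t$) and observing that the corresponding digit $p-1$ of $t_2$ (or $t_3$) forces a carry. The only cosmetic difference is that the paper splits sufficiency into the cases $s=0$ versus $s\neq 0$, whereas you phrase it symmetrically via ``at least one of $s,t$ is nonzero''.
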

\begin{proof}
Suppose $p$ divides all the products $\binom{t_2+s}{s}\binom{t_3+t}{t}$ with
$s+t = p^{d'}$ and $0\le d'\le m$. Taking $s=p^{d'}$ and $t=0$
we get $\binom{t_3 + t }{t}=1$. Thus, $\binom{t_2 + p^{d'}}{p^{d'}}$ is
divisible by $p$ for any $0\le d'\le m$. It follows from
Proposition~\ref{lucas} that the  coefficient of $p^{d'}$ in the $p$-adic expansion of
$t_2$ is $(p-1)$ for any $0\le d'\le m$. The case of $t_3$ is proved similarly.

Now, suppose that $t_2$ and $t_3$ satisfy \eqref{t2t3} and $0\le d'\le p^m$,
$s+t = p^{d'}$. If $s=0$, then $t=p^{d'}$ and $\binom{t_3 +p^{d'}}{p^{d'}}$ is
divisible by $p$ by Proposition~\ref{lucas}. Suppose $s\not=0$ and $s_i$ is the
first non-zero coefficient  in the $p$-adic expansion of $s$. Then the
$i$th coefficient of $t_2 + s$ in its $p$-adic expansion is $s_i-1$. Since
$s_i-1<s_i$, we get, from Proposition~\ref{lucas}, that $\binom{t_2 + s}{s}$ is
divisible by $p$.
\end{proof}
As an immediate consequence of Lemma~\eqref{prop4.8} we get the following
result.
\begin{lemma}
	\label{lemma4.1}
	Given $j$ and $j'$ as above we have:
	\begin{enumerate}[(i)]
		\item $p_1^t\left( \xi_{l,j} \right) = 0$ if and only if the
			$p$-adic expansions of $t_2$ and $t_3$ are
			\begin{align*}
				t_2& = (p-1) + (p-1)p + \dots+
				(p-1)p^{d+1} + c'_{d+2} p^{d+2} +
		\dots \\
		t_3 &= (p-1) + (p-1) p + \dots + (p-1) p^d + c''_{d+1}
		p^{d+1} + \dots;
			\end{align*}
		\item $p_1^{t}\left( \xi_{l,j'} \right) =0$ if and only if the
			$p$-adic expansion of $t_3$ is of the form
			\begin{equation*}
				(p-1) + (p-1)p + \dots + (p-1) p^{d} + 0\cdot
				p^{d+1} + \dots .
			\end{equation*}
	\end{enumerate}
\end{lemma}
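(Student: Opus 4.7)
The plan is to reduce each of the two vanishing questions to a system of congruences modulo $p$ by writing $p_1^t(\xi_{l,j})$ and $p_1^t(\xi_{l,j'})$ as $\K$-linear combinations of pairwise distinct basis elements of $P_1^t$. Once this reduction is in place, Proposition~\ref{lucas} together with the carry-analysis already used in the proof of Lemma~\ref{prop4.8} pins down exactly when all coefficients are divisible by $p$.

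Part (ii) is the easier of the two. Formula \eqref{4.7} exhibits
\[
p_1^t(\xi_{l,j'}) \;=\; \sum_{d'=0}^{d+1}\binom{t_3-p^{d+1}+p^{d'}}{p^{d'}}\,\xi_{l(1,p^{d'}),j'},
\]
and the $\xi_{l(1,p^{d'}),j'}$ for $0\le d'\le d+1$ are distinct elements of the basis $B_2$, so vanishing is equivalent to $p\mid\binom{(t_3-p^{d+1})+p^{d'}}{p^{d'}}$ for every such $d'$. By Proposition~\ref{lucas} this amounts to the $p$-adic digits of $t_3-p^{d+1}$ in positions $0,\dots,d+1$ all being $p-1$. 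A short borrow/carry analysis (using $t_3\ge p^{d+1}$) converts this back into the stated condition: the digits of $t_3-p^{d+1}$ in positions $0,\dots,d+1$ are all $p-1$ if and only if the digits of $t_3$ in positions $0,\dots,d$ are all $p-1$ and the $(d+1)$st digit of $t_3$ is $0$.

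For part (i), combining \eqref{4.4} with the expansion of $p_1^t$ gives
\[
p_1^t(\xi_{l,j}) \;=\; \sum_{d'=0}^{d+1}\;\sum_{\substack{s+t=p^{d'}\\ 0\le t\le a}}\binom{t_2+s}{s}\binom{t_3+t}{t}\,\xi_{h(s,t),j},\qquad a=p^{d+1}-1.
\]
The remark after \eqref{4.4} and the explicit shape of $h(s,t)$ imply that the $\xi_{h(s,t),j}$ are pairwise distinct basis vectors, so the sum vanishes iff every coefficient is $\equiv 0\pmod p$. Specializing to $(s,t)=(p^{d'},0)$ for $0\le d'\le d+1$ and applying Proposition~\ref{lucas} forces the $p$-adic digits of $t_2$ in positions $0,\dots,d+1$ to be $p-1$; specializing to $(s,t)=(0,p^{d'})$, which is allowed only for $0\le d'\le d$ because of the constraint $t\le a$, forces the digits of $t_3$ in positions $0,\dots,d$ to be $p-1$. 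The converse direction then follows verbatim from the argument in the proof of Lemma~\ref{prop4.8}: under these $p$-adic conditions, $p\mid\binom{t_2+s}{s}$ whenever $s\ge 1$, and $p\mid\binom{t_3+t}{t}$ whenever $s=0$ (which forces $d'\le d$), so every remaining coefficient is divisible by $p$.

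The only real subtlety, and the reason for the asymmetry between the $t_2$-condition (through position $d+1$) and the $t_3$-condition (only through position $d$) in part (i), is the exclusion of the pair $(s,t)=(0,p^{d+1})$ at the top level $d'=d+1$; this single missing term is precisely what prevents us from imposing $p\mid\binom{t_3+p^{d+1}}{p^{d+1}}$ and hence from forcing the $(d+1)$st $p$-adic digit of $t_3$ to equal $p-1$.
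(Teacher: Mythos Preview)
Your proof is correct and follows the same route the paper intends: the paper states only that the lemma is an immediate consequence of Lemma~\ref{prop4.8}, and what you have written is precisely the unpacking of that claim. You correctly identify that the basis elements $\xi_{h(s,t),j}$ (respectively $\xi_{l(1,p^{d'}),j'}$) are pairwise distinct, reduce vanishing to divisibility of all coefficients, and then apply the digit argument from Proposition~\ref{lucas} exactly as in the proof of Lemma~\ref{prop4.8}, with the one adjustment that the constraint $t\le a=p^{d+1}-1$ removes the pair $(s,t)=(0,p^{d+1})$ and hence relaxes the condition on the $(d{+}1)$st digit of $t_3$.
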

Suppose now that $p_1^t\left( \xi_{l,j} \right)\not=0$. Let
\begin{equation*}
	b:= b(j):= \min\left\{\,  0\le d'\le d+1 \,\middle|\,
	\xi_{l(1,p^{d'}),l} \xi_{l,j}\not=0 \right\}.
\end{equation*}
Then $p$ divides all the products $\binom{t_2 + s}{s}\binom{t_3 + t}{t}$ with
$s+t = p^{d'}$ and $0\le d'< b$,  and there are $s$ and $t$ such that $s+t= p^b$
and $\binom{t_2 + s}{s} \binom{t_3 + t}{t}$ is non-zero in $\K$.
From Lemma~\ref{prop4.8}, we obtain that the $p$-adic expansions of $t_2$ and
$t_3$ should be of the form
\begin{equation}
	\label{4.9}
	\begin{aligned}
		t_2 & = (p-1) + \dots + (p-1)p^{b-1} + c_b' p^b + \dots \\
		t_3 & = (p-1) + \dots + (p-1)p^{b-1}+  c_b'' p^b + \dots,
	\end{aligned}
\end{equation}
where either $c_b'$ or $c_b''$ is different from $p-1$.

\begin{lemma}
	\label{lemma4.2}
Given $j$, $j'$ and $b=b(j)$ as above, we have
$\xi_{l\left( 1,p^{d'} \right), l}\xi_{l,j}=0$, if $d'<b$, and
\begin{equation*}
	\xi_{l(1,p^b),l} \xi_{l,j} =
	\begin{cases}
		\binom{t_2 + p^b}{p^b} \xi_{l\left( 1,p^b \right),j} +
		\binom{t_3 + p^b}{p^b} \xi_{i,j}, & \mbox{if $b<d+1$}\\[2ex]
		\binom{t_2 + p^{d+1}}{p^{d+1}} \xi_{l\left( 1,p^{d+1}
		\right),j}, & \mbox{ if $b=d+1$},
	\end{cases}
\end{equation*}
where
\begin{equation*}
T^\lambda_i =
\raisebox{0.0ex}{
$\begin{array}{l}
1\dots\dots\dots 1\\[1ex]
\underbrace{2\dots\dots 2}_{p^{d+1}} \underbrace{1\dots \dots 1}_{p^b} 2\dots
\dots \dots 2\\
3\dots \dots \dots 3
\end{array}$
}.
\end{equation*}
\end{lemma}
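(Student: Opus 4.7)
The plan is to specialize the multiplication formula~\eqref{4.4} to $d' = b$ and to use the $p$-adic expansions~\eqref{4.9} of $t_2$ and $t_3$ to show that almost all resulting binomial coefficients vanish modulo $p$. The first assertion $\xi_{l(1,p^{d'}),l}\xi_{l,j} = 0$ for $d' < b$ is immediate from the definition of $b = b(j)$, so the entire content lies in the computation at $d' = b$.

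Writing $h(s,t) = (1^{\lambda_1+s},2^{\lambda_2-a-s},1^t,2^{a-t},3^{\lambda_3})$, formula~\eqref{4.4} gives
\[
\xi_{l(1,p^b),l}\xi_{l,j} \;=\; \sum_{s+t = p^b} \binom{t_2+s}{s}\binom{t_3+t}{t}\,\xi_{h(s,t),j},
\]
subject to $0 \le s \le \lambda_2 - a = p^{d+1}$ and $0 \le t \le a = p^{d+1}-1$. The key step is to show that for every $u$ with $0 < u < p^b$, both $\binom{t_2+u}{u}$ and $\binom{t_3+u}{u}$ are divisible by $p$. Indeed, by~\eqref{4.9} the $p$-adic digits of $t_2$ (respectively $t_3$) in positions $0,\dots,b-1$ are all equal to $p-1$. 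Letting $k = \min\{i : u_i > 0\} \le b-1$, the $p$-adic addition of $t_2 + u$ produces a carry beginning at position $k$, so that the $k$-th digit of $t_2 + u$ equals $u_k - 1$; Proposition~\ref{lucas} then yields the vanishing factor $\binom{u_k - 1}{u_k} = 0$, and the same argument applies to $t_3 + u$.

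Applying this to both $s$ and $t$ whenever $0 < s < p^b$ (equivalently $0 < t < p^b$) eliminates every ``interior'' term of the sum, leaving only the extreme pairs $(s,t) = (p^b,0)$ and $(s,t) = (0,p^b)$. Feasibility of $(0,p^b)$ requires $p^b \le a = p^{d+1}-1$, i.e.\ $b \le d$, so this term is absent precisely when $b = d+1$, while $(p^b,0)$ is always feasible. A direct inspection shows that $h(p^b,0) = l(1,p^b)$ and that $h(0,p^b)$ is exactly the element $i$ described in the statement, which yields the two displayed cases. There is no deeper obstacle: the argument is a careful Lucas bookkeeping exercise, with the only case distinction being whether the boundary pair $(0,p^b)$ lies in the admissible range for $t$.
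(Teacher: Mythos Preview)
Your proof is correct and follows essentially the same approach as the paper's: specialize formula~\eqref{4.4} at $d'=b$, use the $p$-adic form~\eqref{4.9} and the carry argument (Proposition~\ref{lucas}) to kill all interior terms $0<s<p^b$, and then observe that the term $(s,t)=(0,p^b)$ is excluded by the constraint $t<p^{d+1}$ precisely when $b=d+1$. The only cosmetic difference is that you show \emph{both} binomial factors vanish for interior pairs, whereas the paper notes that the vanishing of $\binom{t_2+s}{s}$ alone already suffices.
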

\begin{proof}
In the conditions of the lemma, we see that $t_2$ and $t_3$ are as in
\eqref{4.9}. Suppose $b<d+1$.  Given $0<s<p^b$, if $s_i$ is the first
non-zero coefficient in the  $p$-adic expansion of $s$, then the $i$th
coefficient in the $p$-adic expansion of $t_2 + s$ is $s_i-1$. Therefore $p$ divides $\binom{t_2+s}{s}$.
Hence, when we apply \eqref{4.4} to $\xi_{l(1,p^b),l} \xi_{l,j}$, only the
summands corresponding to $s=p^b$, $t=0$ and $s=0$, $t=p^b$ remain.

If $b=d+1$ a similar argument applies. Only this time, the condition
$t<p^{d+1}$ in \eqref{4.4}
implies that we are left only with the summand that corresponds to $s=p^b$,
$t=0$.
\end{proof}
\begin{lemma}
	\label{lemma4.3}
Given $j$,  $j'$ and $b=b(j)$
as above
we have:
\begin{enumerate}[(i)]
	\item if $b<d+1$ and $p_1^t\left( \xi_{l,j'} \right)\not=0$, then
		$p_1^t\left( \xi_{l,j} \right)$ and $p_1^t\left( \xi_{l,j'}
		\right)$ are linearly independent;
	\item if $b=d+1$, then $p_1^t\left( \xi_{l,j} \right)$ and $p_1^t\left(
		\xi_{l,j'} \right)$ are linearly dependent.
\end{enumerate}
\end{lemma}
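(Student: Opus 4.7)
The strategy is to expand both $p_1^t(\xi_{l,j})$ and $p_1^t(\xi_{l,j'})$ in the basis $B_2$ of $P_1^t$ and compare them summand by summand across the decomposition $P_1^t=\bigoplus_{\nu,d'}\xi_{\lambda(\nu,p^{d'})}S(B^+)$. Every basis element appearing in either expansion sits in some $\nu=1$ summand, and the $d'$ of that summand coincides with the $d'$ of the $\xi_{l(1,p^{d'}),l}\xi_{l,\cdot}$ that produced it. Linear dependence thus reduces, inside each summand $\xi_{\lambda(1,p^{d'})}S(B^+)$, to deciding whether pairs $(l(1,p^{d'}),h)$ and $(l(1,p^{d'}),h')$ lie in the same $\Sigma_r$-orbit, i.e.\ to comparing row-semistandard $\lambda(1,p^{d'})$-tableaux.

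For part~(i), Lemma~\ref{lemma4.2} pins down the $\xi_{\lambda(1,p^b)}S(B^+)$-component of $p_1^t(\xi_{l,j})$ as $\binom{t_2+p^b}{p^b}\xi_{l(1,p^b),j}+\binom{t_3+p^b}{p^b}\xi_{i,j}$, whereas the corresponding component of $p_1^t(\xi_{l,j'})$ contains at most the single term $\binom{t_3-p^{d+1}+p^b}{p^b}\xi_{l(1,p^b),j'}$ from \eqref{4.7}. Writing $\xi_{i,j}=\xi_{l(1,p^b),k'}$ with $k'\in J(\lambda(1,p^b))$ and computing the three tableaux $T^{\lambda(1,p^b)}_j$, $T^{\lambda(1,p^b)}_{j'}$, $T^{\lambda(1,p^b)}_{k'}$ directly, one sees that the row-one distributions of $2$'s and $3$'s are pairwise different, so the three basis vectors $\xi_{l(1,p^b),j}$, $\xi_{i,j}$, $\xi_{l(1,p^b),j'}$ are distinct. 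By \eqref{4.9} at least one of $c_b',c_b''$ is less than $p-1$; Proposition~\ref{lucas} then gives a nonzero coefficient in $\K$ for the corresponding one of $\xi_{l(1,p^b),j}$, $\xi_{i,j}$ in $p_1^t(\xi_{l,j})$. A putative relation $\alpha p_1^t(\xi_{l,j})+\beta p_1^t(\xi_{l,j'})=0$ then forces $\alpha=0$ by reading off that distinguished coefficient, and the hypothesis $p_1^t(\xi_{l,j'})\neq 0$ gives $\beta=0$.

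For part~(ii), the hypothesis $b=d+1$ forces the first $d+1$ $p$-adic digits of $t_3$ to equal $p-1$, so Proposition~\ref{lucas} kills every summand of \eqref{4.7} with $d'\le d$, leaving at most the term $\binom{t_3}{p^{d+1}}\xi_{l(1,p^{d+1}),j'}$. If $c_{d+1}''\ge 1$ this term is nonzero, and Lemma~\ref{lemma4.2} simultaneously gives $p_1^t(\xi_{l,j})=\binom{t_2+p^{d+1}}{p^{d+1}}\xi_{l(1,p^{d+1}),j}$; a direct comparison of $T^{\lambda(1,p^{d+1})}_j$ and $T^{\lambda(1,p^{d+1})}_{j'}$ then shows that both semistandardize to the same tableau (row one: $\lambda_1-t_2-t_3$ ones, $t_2+p^{d+1}$ twos, $t_3$ threes; row two: $p^{d+1}-1$ threes; row three: $\lambda_3$ threes), so both $p_1^t$'s are scalar multiples of a single basis vector. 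If $c_{d+1}''=0$, the borrowing required to compute $t_3-p^{d+1}$ makes every surviving binomial divisible by $p$, so $p_1^t(\xi_{l,j'})=0$ and the dependence is trivial; Lemma~\ref{lemma4.1} disposes of any residual case in which additionally $p_1^t(\xi_{l,j})=0$.

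The main obstacle is the semistandard bookkeeping underlying these comparisons. Since $i$, $j$, $j'$, $k'$ all share the weight $\lambda(1,p^{d'})$, distinguishing or identifying their classes in $J(\lambda(1,p^{d'}))$ comes down to tracking which entries from row two of $T^\lambda$ are absorbed into row one of $T^{\lambda(1,p^{d'})}$ and how they interact with the $2$'s and $3$'s already present. The contrast between the two parts of the lemma is precisely that at shape $\lambda(1,p^{d+1})$ the entire row-two block separating $j$ from $j'$ is absorbed (producing a coincidence), whereas at $\lambda(1,p^b)$ with $b<d+1$ only a proper piece is absorbed and an observable discrepancy remains.
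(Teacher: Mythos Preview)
Your proof is correct and follows essentially the same route as the paper's. Both arguments isolate the $d'=b$ component of $P_1^t$ and reduce the question to whether the basis elements $\xi_{l(1,p^b),j}$, $\xi_{i,j}$, and $\xi_{l(1,p^b),j'}$ coincide or not. The only stylistic difference is in how this comparison is carried out: the paper works directly with $\Sigma_r$-orbits, arguing that no $\pi\in\Sigma_{\lambda(1,p^b)}$ can carry $j$ to $j'$ (since at most $p^b<p^{d+1}$ twos can cross rows) and similarly for $(i,j)$, whereas you compute the row-semistandard representatives in $J(\lambda(1,p^b))$ and compare their row-one contents. For part~(ii) the situation is parallel: the paper exhibits a permutation in $\Sigma_{\lambda(1,p^{d+1})}$ taking $j$ to $j'$, while you verify that both semistandardize to the same $\lambda(1,p^{d+1})$-tableau. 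Your case split on $c''_{d+1}$ is unnecessary (the paper's uniform computation of the $p$-adic digits of $t_3-p^{d+1}+p^{d'}$ covers both), but it does no harm.
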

\begin{proof}
	Suppose $b<d+1$. Using Lemma~\ref{lemma4.2}, we only have to make sure
	that $\xi_{l(1,p^b), j'}$ is different from $\xi_{l(1,p^b),j}$ and
	$\xi_{i,j}$. Note first that $j'\not=j\pi$ for any $\pi\in
	\Sigma_{\lambda(1,p^b)}$. In fact, $\pi$ of $\Sigma_{\lambda(1,p^b)}$ can move at most
	$p^b$
	$2$'s from the second row of $T^\lambda_j$ to its first row. But
	$T^\lambda_{j'}$ is obtained from $T^{\lambda}_j$ by moving exactly
	$p^{d+1}$  $2$'s from the second to the first row. As $p^{d+1}>p^b$ this
	can not be achieved by application of $\pi\in \Sigma_{\lambda(1,p^b)}$.
	Thus $\left( l\left( 1,p^b \right), j \right)\not\sim \left( l\left(
	1,p^b \right), j'
	\right)$. In a similar way, we see that $\left( l(1,p^b), j'
	\right)\not\sim \left( i,j \right)$, since no $\sigma$ satisfying
	$j\sigma = j'$ can move the $p^b$ $1$'s from the second row of
	$T^\lambda_i$ to the first $p^b$ positions of this row.

	Now consider the case $b=d+1$. In this case, the permutations that
	permute the $p^{d+1}$ $2$'s in the second row of $T^\lambda_j$ with the
	first $p^{d+1}$ $3$'s in the first row belong to
	$\Sigma_{\lambda(1,p^{d+1})}$. So $\xi_{l\left( 1,p^{d+1} \right), j'} =
	\xi_{l\left( 1,p^{d+1} \right),j}$.
Notice also that $b=d+1$ implies that
		$ t_3 = (p-1) + \dots + (p-1) p^d + c''_{d+1} p^{d+1} + \cdots.$
	Thus
	$t_3 - p^{d+1} + p^{d'} = (p-1) + \dots + (p-1)p^{d'-1} + c''_{d+1}
	p^{d+1} + \cdots.$
Hence
$	p_1^t\left( \xi_{l,j'} \right) = \binom{t_3}{p^{d+1}} \xi_{l\left(
	1,p^{d+1}
	\right),j'}  = \binom{t_3}{p^{d+1}} \xi_{l( 1,p^{d+1} ), j}.$
\end{proof}
At this point, we should remark that if $t_3\not=0$, then $j\not\in J\left(
\lambda\left( 1,p^b \right)
\right)$ for any $b$, since $T^{\lambda(1,p^b)}_j$ is not row semistandard. But
$\xi_{l(1,p^b),j}= \xi_{l(1,p^b),j\pi}$, for any $\pi$ in the row stabilizer of
$T^{\lambda(1,p^b)}$, and we can choose $\pi$ such that $j\pi\in J\left(
\lambda(1,p^b)
\right)$. So $\xi_{l(1,p^b),j}= \xi_{l(1,p^b),j\pi} \in B_2$. In the particular
case of $b=d+1$, we have $\xi_{l(1,p^{d+1}), j} = \xi_{l(1,p^{d+1}),j'}$.

The following is the main result of this section.
\begin{proposition}
	\label{prop4.4}
	Let $\ch\K= p$, $d$ a natural number,  and $\lambda = \left( \lambda_1,\lambda_2,\lambda_3
	\right)\in \Lambda(3,r)$ with $p^d\le \lambda_3<p^{d+1}$. Suppose
	$\lambda_2 = 2p^{d+1} -1$. Let $B_2$ be the basis~\eqref{1.0} of
	$P_1^t$. Then we obtain from $B_2$ a new basis $\hat{B}_2$ of $P_1^t$ in
	the following way: given $j\in J\left( \lambda \right)$, suppose the
	$p$-adic expansion of $a(j)$ is $\sum_{q=0}^{d+1} a_qp^q$. Then:
	\begin{enumerate}[(a)]
		\item If $a_q \not=p-1$, for some $q\le d$, we replace
			$\xi_{l(2,p^{m(j)}),j}$ by $p_1^t(\xi_{l,j})$ in
			$B_2$.
		\item If $a(j)= p^{d+1}-1$, let $t_2$ and
			$t_3$ be the number of $2$'s and $3$'s in the first row
			of $T^\lambda_j$, respectively.
			\begin{enumerate}[(i)]
				\item
			If the $p$-adic expansions of $t_2$ and $t_3$ are:
\begin{align*}
	t_2 & =(p-1) + \dots + (p-1)p^{b-1} + c'_b p^b + \dots \\
	t_3 &= (p-1) + \dots + (p-1) p^{b-1} + c''_b p^b + \dots\\& \not=
	(p-1) + \dots + (p-1) p^d + 0\cdot p^{d+1} + \dots
\end{align*}
and $c'_b \not= p-1$ or $c''_b\not=p-1$ for some $b<d+1$,
then in $B_2$, we replace $\xi_{l(1,p^b), j}$ and $\xi_{l(1,p^{d+1}), j'}$ by
$p_1^t\left( \xi_{l,j} \right)$ and $p_1^t(\xi_{l,j'})$, respectively.
\item If the $p$-adic expansions of $t_2$ and $t_3$ are:
\begin{align*}
	t_2 & =(p-1) + \dots + (p-1)p^{b-1} + c'_b p^b + \dots \\
	t_3 &= (p-1) + \dots + (p-1) p^d + 0\cdot p^{d+1} + \dots
\end{align*}
for some $b<d+1$  such that $c'_b \not=p-1$, then $p_1^t\left(
\xi_{l,j'} \right)= 0
$ and we replace $\xi_{l(1,p^b),j}$ by $p_1^t(\xi_{l,j})$ in $B_2$.
\item If the $p$-adic expansions of $t_2$ and $t_3$ are:
\begin{align*}
	t_2 & =(p-1) + \dots + (p-1)p^{d} + c'_{d+1} p^{d+1} + \dots \\
	t_3 &= (p-1) + \dots + (p-1) p^d + c''_{d+1}p^{d+1} + \dots
\end{align*}
with $c''_{d+1}\not=0$, then we replace $\xi_{l(1,p^{d+1}), j}$ by
$p_1^t(\xi_{l,j'})$ in $B_2$. In this case $\xi_{l(1,p^{d+1}), j} =
\xi_{l(1,p^{d+1}), j'}$ and $p_1^t(\xi_{l,j})$ is a multiple of
$p_1^t(\xi_{l,j'})$.
\item If the $p$-adic expansions of $t_2$ and $t_3$ are:
\begin{align*}
	t_2 & =(p-1) + \dots + (p-1)p^{d} + c'_{d+1} p^{d+1} + \dots \\
	t_3 &= (p-1) + \dots + (p-1) p^d + 0 \cdot p^{d+1} + \dots
\end{align*}
with $c'_{d+1}\not=p-1$, then $p_1^t(\xi_{l,j'}) = 0$ and we replace
$\xi_{l(1,p^{d+1}), j'}$ by $p_1^t(\xi_{l,j})$ in $B_2$
	\end{enumerate}
\item If $a(j) = 2p^{d+1}-1$ with $t_2<p^{d+1}$.
	\begin{enumerate}[(i)]
		\item If the $p$-adic expansion of $t_3$ is:
			\begin{equation*}
				t_3 = (p-1) + \dots + (p-1) p^{b-1} + c_b p^b +
				\dots
			\end{equation*}
			with $b\le d+1$ and $c_b\not= p-1$, then we replace
			$\xi_{l(1,p^b),j}$ by $p_1^t(\xi_{l,j})$ in $B_2$.
		\item If the $p$-adic expansion of $t_2$ is
			\begin{equation*}
				t_3 = (p-1) + \dots + (p-1)p^{d+1} + \dots,
			\end{equation*}
			then $p_1^t(\xi_{l,j}) = 0$.
	\end{enumerate}
	\end{enumerate}
\end{proposition}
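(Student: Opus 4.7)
The plan is to verify the replacements case-by-case by showing that each substitution $\xi_{l(\nu,p^{d'}),h}\mapsto p_1^t(\xi_{l,j})$ is an invertible change of basis. Concretely, for each prescribed $j$ I would exhibit a single ``pivot'' element $\xi_{l(\nu,p^{d'}),h}$ of $B_2$ whose coefficient in the $B_2$-expansion of $p_1^t(\xi_{l,j})$ is non-zero in $\K$, and then use Remark~\ref{4.5} together with the distinctness of the indices $(\nu,p^{d'})$ to conclude that pivots for non-equivalent $j$'s cannot coincide. Ordering $B_2$ so that pivots come first yields an upper-triangular change-of-basis matrix with non-zero diagonal, and hence $\hat{B}_2$ is a $\K$-basis of $P_1^t$. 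As a preliminary step I would check that cases (a), (b) and (c) partition the relevant $j\in J(\lambda)$ by the $p$-adic shape of $a(j)$, and that the hypothesis $t_2<p^{d+1}$ in case (c) precisely excludes those $j$ which are companions, in the sense of~\eqref{4.6}, of some case-(b) element -- so the replacements prescribed in (b) and (c) do not conflict.

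For case (a), the argument of Proposition~\ref{lemma3} transfers verbatim: Lemma~\ref{lemma2} applied with $\nu=2$ yields the coefficient $\binom{a(j)+p^{m(j)}}{p^{m(j)}}$ for the pivot $\xi_{l(2,p^{m(j)}),j}$, and this is non-zero by Proposition~\ref{lucas} and the definition of $m(j)$. Case (b), where $a(j)=p^{d+1}-1$, is the most intricate. One first notes that all $\nu=2$ summands of $p_1^t(\xi_{l,j})$ vanish by Proposition~\ref{lucas}, leaving a sum of $\xi_{l(1,p^{d'}),h}$-terms which Lemma~\ref{lemma4.2} simplifies to one summand if $b=d+1$ and to two summands if $b<d+1$. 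Lemma~\ref{lemma4.1} controls when the companion $p_1^t(\xi_{l,j'})$ vanishes, and the four sub-cases then follow directly: in (i) Lemma~\ref{lemma4.3}(i) gives linear independence of the two replacements, with $\xi_{l(1,p^b),j}$ and $\xi_{l(1,p^{d+1}),j'}$ as manifestly distinct pivots; in (ii) and (iv) the companion vanishes, leaving a single pivot whose coefficient $\binom{t_2+p^b}{p^b}$ or $\binom{t_2+p^{d+1}}{p^{d+1}}$ is non-zero thanks to the hypothesis $c'_b\ne p-1$ (resp.\ $c'_{d+1}\ne p-1$); in (iii) Lemma~\ref{lemma4.3}(ii) yields dependence and the identification $\xi_{l(1,p^{d+1}),j}=\xi_{l(1,p^{d+1}),j'}$ allows the single replacement by $p_1^t(\xi_{l,j'})$, whose coefficient at that pivot is $\binom{t_3}{p^{d+1}}\ne 0$ since $c''_{d+1}\ne 0$.

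Case (c) requires one fresh calculation. The $p$-adic expansion of $a(j)=2p^{d+1}-1$ is $(p-1,\dots,p-1,1)$ at positions $(0,\dots,d,d+1)$, so adding $p^{d'}$ with $0\le d'\le d$ cascades a chain of carries up to position $d+1$, producing a zero digit at position $d'$ in $a+p^{d'}$; Proposition~\ref{lucas} then forces $\binom{a+p^{d'}}{p^{d'}}\equiv 0\pmod p$, so every $\nu=2$ summand of $p_1^t(\xi_{l,j})$ vanishes. Since row 2 of $T^\lambda_j$ is constant with all entries $3$, Lemma~\ref{lemma2} applies with $\nu=1$, $c=3$, $a=t_3$, to give $\xi_{l(1,p^{d'}),l}\xi_{l,j}=\binom{t_3+p^{d'}}{p^{d'}}\xi_{l(1,p^{d'}),j}$, and sub-cases (i) and (ii) are then read off by inspecting the $p$-adic digits of $t_3$ via Proposition~\ref{lucas}: the first non-$(p-1)$ digit of $t_3$ (if any) gives the pivot $\xi_{l(1,p^b),j}$, otherwise $p_1^t(\xi_{l,j})=0$.

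The main obstacle is the combinatorial bookkeeping in case (b): verifying that the four shapes of $(t_2,t_3)$ are exhaustive and mutually exclusive, that in each shape the chosen pivot does carry a non-zero coefficient (which requires some additional carry analysis via Proposition~\ref{lucas}), and that no $B_2$-element is replaced twice -- either by two replacements within case (b), or across the boundary between cases (b) and (c). The last of these is ultimately guaranteed by Remark~\ref{4.5}, which ensures that for non-equivalent critical $j_1,j_2\in J(\lambda)$ the sets of basis elements appearing in the expansions of $p_1^t(\xi_{l,j_1})$ and $p_1^t(\xi_{l,j_2})$ are disjoint, so pivots for different orbits cannot collide.
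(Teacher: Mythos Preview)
Your proposal is correct and follows essentially the same approach as the paper: cases (a) and (b) are assembled from Lemmas~\ref{lemma4.1}, \ref{lemma4.2}, \ref{lemma4.3} and the non-critical argument (exactly as the paper says), and case (c) is handled by observing that row~$2$ of $T^\lambda_j$ is constant so Lemma~\ref{lemma2} applies with $\nu=1$, yielding the formula $p_1^t(\xi_{l,j})=\sum_{d'}\binom{t_3+p^{d'}}{p^{d'}}\xi_{l(1,p^{d'}),j}$. Your treatment is in fact more explicit than the paper's on two points---the triangular ``pivot'' bookkeeping justifying that $\hat B_2$ is a basis, and the observation that $t_2<p^{d+1}$ in (c) is precisely the condition that $j$ is not the companion $\tilde\jmath'$ of some case-(b) element---but the underlying argument is the same.
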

\begin{proof}
Putting together Lemma~\ref{lemma4.1}, Lemma~\ref{lemma4.3}, and
the result obtained for $j$ not a critical element of $J\left( \lambda
\right)$, we obtain (a) and (b).

If $a(j) = 2p^{d+1}-1$ and $t_2 \ge p^{d+1}$, then there is $\tilde\jmath\in
J(\lambda)$ with
$a(\tilde\jmath)=p^{d+1}-1 $ such that $j=\tilde\jmath'$.
Thus this case is considered in (b). Now suppose that $t_2<  p^{d+1}$.
By the formula similar to~\eqref{4.7}, we get
\begin{equation}
	\label{t3}
	p_1^t(\xi_{l,j}) = \sum_{0\le d'\le d+1}
	\binom{t_3 +p^{d'}}{p^{d'}} \xi_{l(1,p^{d'}),j}.
\end{equation}
If the $p$-adic
expansion of $t_3$ is of the form $	t_3 = (p-1) + \dots + (p-1)p^{d+1} + \cdots,$
then~\eqref{t3} implies that $p_1^t(\xi_{l,j})=0$.
Otherwise, let $b$
be the first coefficient of the $p$-adic expansion of $t_3$ different from zero.
Then the coefficient of $\xi_{l(1,p^b),j}$ in \eqref{t3} is non-zero.
Thus we can replace $\xi_{l(1,p^b),j}$ by $p_1^t(\xi_{l,j})$ in $B_2$.
\end{proof}
To construct an Auslander-Reiten sequence ending with $\K_\lambda$,  we repeat the
procedure used in the previous cases. Let $\hat{B}^*_2$ be the basis of
$DP_1^t$ dual to $\hat{B}_2$. Define $U_\lambda$ as the $S(B^+)$-submodule of
$DP_1^t$ spanned by the elements of $\hat{B}^*_2$ which do not correspond to
$p_1^t(\xi_{l,j})$. If $E(\lambda)$, $f$, and $g$ are defined as in
Theorem~\ref{prop1}, then an Auslander-Reiten sequence ending with $\K_\lambda$
is	$0\to U_\lambda \stackrel{f}{\longrightarrow} E\left( \lambda
		\right)
		\stackrel{g}{\longrightarrow} \K_\lambda \to 0.$

	\section{The socle of $S^+(n,r)$}
	\label{soclesection}
	In the previous sections we studied the kernel of the map $p_1^t$.
	Since this kernel can be identified with  $\Hom_{S(B^+)}(\K_\lambda,
	S(B^+))$, it provides information on the socle of the Borel-Schur
	algebra $S(B^+)$. Namely,  $p_1^t$ is non-injective if and only if
	$\K_\lambda$ is in the socle of $S(B^+)$.
	In this section we collect some facts on the socle of $S(B^+)$.
	We will use the usual notation $\Lambda^+(n,r)$ for the subset of
	partitions in $\Lambda(n,r)$.

	We start with the following auxiliary result.
\begin{lemma}
	\label{propvanish1}
Suppose $\nu\in \Lambda(n,r)\setminus \Lambda^+(n,r)$ and let $M$ be an
$S(n,r)$-module.
	Then
	$\Hom_{S(B^+)}(\K_\nu, M) = 0$, where we consider $M$ as an
	$S(B^+)$-module by restriction.
\end{lemma}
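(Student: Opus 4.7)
Let $\phi\in\Hom_{S(B^+)}(\K_\nu, M)$ be arbitrary and set $m:=\phi(1_{\K_\nu})\in M$; I will show $m=0$. The defining $S(B^+)$-action on $\K_\nu$ forces $\xi_\alpha m=\delta_{\alpha,\nu}m$ (so $m$ has weight $\nu$) and $\xi_{i,j}m=0$ for every pair $(i,j)$ with $i\le j$ and $(i,j)\not\sim(l(\nu),l(\nu))$; in particular $\xi_{i,j}m=0$ for every strictly increasing pair $i<j$, and $S(B^+)\cdot m=\K m$. Thus $m$ is a ``$B^+$-primitive'' weight-$\nu$ vector inside the ambient $S(n,r)$-module $M$.

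The strategy is to derive a contradiction from $m\ne 0$ using the full $S(n,r)$-action. Consider the $S(n,r)$-submodule $N:=S(n,r)\cdot m\subseteq M$. Using the triangular decomposition $S(n,r)=S(B^-)\cdot S(B^+)$ (valid as a product of $\K$-spans) together with $S(B^+)\cdot m=\K m$, I obtain
\begin{equation*}
N = S(B^-)\cdot S(B^+)\cdot m = S(B^-)\cdot m.
\end{equation*}
Since $S(B^-)$ has $\K$-basis $\{\xi_{i,j}\mid i\ge j\}$, and each such pair satisfies $\mathrm{wt}(i)\ledom\mathrm{wt}(j)$ by the remark in Section~\ref{notation}, while $\xi_{i,j}m=0$ unless $j$ has weight $\nu$ (Observation~\ref{observation2}(1)), I conclude that every weight $\alpha$ appearing in $N$ satisfies $\alpha\ledom\nu$.

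Next, $N$ is a polynomial $\Gl_n$-representation of degree $r$, so its formal character $\sum_\alpha(\dim N_\alpha)x^\alpha$ is symmetric in $x_1,\dots,x_n$ (the Weyl group of $\Gl_n$ is $\Sigma_n$, acting by permutations of the torus). Let $\nu^+\in\Lambda^+(n,r)$ be the partition obtained by rearranging the parts of $\nu$ in weakly decreasing order; character symmetry gives $\dim N_{\nu^+}=\dim N_\nu$. Assuming for contradiction that $m\ne 0$, we have $\dim N_\nu\ge 1$, hence $\dim N_{\nu^+}\ge 1$ as well, so $\nu^+$ is a weight of $N$, which forces $\nu^+\ledom\nu$ by the previous paragraph. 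But $\nu^+\gedom\nu$ always, with equality if and only if $\nu$ is already a partition--which by hypothesis it is not. This contradiction yields $m=0$, and therefore $\phi=0$.

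I expect the main obstacle to be the rigorous justification of the triangular decomposition $S(n,r)=S(B^-)\cdot S(B^+)$, particularly for the ``middle-cell'' basis elements $\xi_{i,j}$ with $i$ and $j$ componentwise incomparable. A direct approach uses the multiplication formula~\eqref{0.0}: if $k$ is (a suitable $\Sigma_r$-representative of) the componentwise minimum of $i$ and $j$, then $\xi_{i,k}$ lies in $S(B^-)$, $\xi_{k,j}$ lies in $S(B^+)$, and the expansion of $\xi_{i,k}\xi_{k,j}$ isolates $\xi_{i,j}$ modulo basis elements whose index pair is closer to the triangular regime; an induction on a suitable partial order on $\Sigma_r$-orbits of $I\times I$ then delivers the decomposition. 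One may also invoke the quasi-hereditary structure of $S(n,r)$ (cf.~\cite{green}) as a black box.
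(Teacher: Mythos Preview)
Your argument is correct, but it takes a longer route than the paper's. The paper invokes Green's decomposition \cite[Theorem~5.2]{green1}
\[
S(n,r)=\sum_{\lambda\in\Lambda^+(n,r)} S(B^-)\,\xi_\lambda\, S(B^+),
\]
where the middle idempotent ranges over \emph{partitions} only. Since $S(B^+)m=\K m$ and $\xi_\lambda m=0$ for every $\lambda\in\Lambda^+$ (as $\nu\notin\Lambda^+$), this immediately gives $S(n,r)m=0$, hence $m=0$; no weight analysis or character symmetry is needed. Note in particular that the decomposition you flag as the main obstacle, $S(n,r)=S(B^-)S(B^+)$, is an immediate consequence of Green's result (just absorb $\xi_\lambda$ into either factor), so your sketched induction is unnecessary. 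Your approach trades this single citation for the coarser triangular decomposition together with the $\Sigma_n$-symmetry of formal characters of $S(n,r)$-modules; this is perfectly valid and perhaps more self-contained from a representation-theoretic standpoint, but the paper's one-line argument via the refined decomposition is considerably shorter.
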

\begin{proof}
Let $f\colon \K_\nu\to M$ be an $S(B^+)$-homomorphism and
$c\in \K_\nu$. Then $\xi_{ij}f(c) = f(\xi_{ij}c) = 0$
for all $\xi_{ij}\in
S(B^+)$ different from $\xi_\nu$. By \cite[Theorem~5.2]{green1} we have $
	S(n,r) = \sum_{\lambda \in \Lambda^+(n,r)} S(B^-) \xi_\lambda S(B^+),$
where $S(B^-)$ denotes the lower Borel subalgebra of the Schur algebra
$S(n,r)$.
Since $\nu\not\in \Lambda^+(n,r)$, we get that $S(n,r) f(c)=0$. This shows that
$f(c)=0$ for all $c\in \K_\nu$ and thus $f$ is the zero map.
\end{proof}
As a simple consequence we get:
\begin{proposition}
	\label{simple}
	Let $\nu\in \Lambda(n,r)\setminus \Lambda^+(n,r)$. Then
	$\Hom_{S(B^+)}(\K_\nu, S(B^+))=0$.
\end{proposition}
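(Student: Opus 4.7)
The plan is to deduce this immediately from Lemma~\ref{propvanish1}. Observe that $S(B^+)$ sits inside $S(n,r)$ as a subalgebra, so the inclusion $\iota\colon S(B^+)\hookrightarrow S(n,r)$ is in particular an $S(B^+)$-linear monomorphism, where $S(n,r)$ is regarded as a left $S(B^+)$-module by restriction of its regular left module structure.

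Given any $\varphi\in \Hom_{S(B^+)}(\K_\nu, S(B^+))$, I would consider the composition $\iota\circ \varphi\colon \K_\nu \to S(n,r)$. Since $\nu\in \Lambda(n,r)\setminus \Lambda^+(n,r)$ and $M=S(n,r)$ is an $S(n,r)$-module, Lemma~\ref{propvanish1} applied to this $M$ yields $\iota\circ \varphi = 0$. As $\iota$ is injective, this forces $\varphi = 0$, which is the required vanishing.

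No real obstacle arises here; the only thing to mention is that the proof relies on the good behaviour of the inclusion $S(B^+)\subseteq S(n,r)$, i.e.\ that the decomposition $S(n,r) = \sum_{\lambda\in \Lambda^+(n,r)} S(B^-)\xi_\lambda S(B^+)$ used in Lemma~\ref{propvanish1} already accounts for the full regular action when we evaluate at $\varphi(c)\in S(B^+)\subseteq S(n,r)$. So the whole argument is a one-line corollary once Lemma~\ref{propvanish1} is in place.
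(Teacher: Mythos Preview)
Your argument is correct and matches the paper's own proof: the inclusion $S(B^+)\hookrightarrow S(n,r)$ is an $S(B^+)$-linear monomorphism, so $\Hom_{S(B^+)}(\K_\nu,S(B^+))$ embeds into $\Hom_{S(B^+)}(\K_\nu,S(n,r))$, which vanishes by Lemma~\ref{propvanish1}. Your final paragraph about the decomposition is unnecessary here, since Lemma~\ref{propvanish1} already handles the full $S(n,r)$-module $M=S(n,r)$; the only ingredient needed at this stage is the injectivity of $\iota$.
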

\begin{proof}
	The embedding $S(B^+)\hookrightarrow S(n,r)$ induces the injective map
	\begin{equation*}
		\Hom_{S(B^+)}(\K_\nu, S(B^+)) \to \Hom_{S(B^+)}(\K_\nu, S(n,r)).
	\end{equation*}
		Now Lemma~\ref{propvanish1} implies that the vector space $\Hom_{S(B^+)}(\K_\nu,
	S(n,r))$ is trivial. Thus also $\Hom_{S(B^+)}(\K_\nu, S(B^+))$ vanishes.
\end{proof}
Combining the results of the previous sections and Proposition~\ref{simple}, we
get the following theorem.
\begin{theorem}
	\label{socle}
\begin{enumerate}
	\item The module $\K_{(r,0,\dots,0)}$ is a direct summand of the socle
		of $S(B^+)$ independently of $\ch\K$.
	\item Suppose $\ch\K=0$ and $n=2$. Then the socle of $S(B^+)$
		is a direct sum of several copies of $\K_{(r,0)}$.
	\item Suppose $\ch\K=p$ and $n=2$. Then
		$\K_\lambda$ is a direct summand of the socle of $S(B^+)$ if and
		only if  $\lambda=(r,0)$ or $\lambda$ is a partition satisfying
		\begin{equation*}
			\lambda_1 \ge p^{\lfloor \log_p \lambda_2\rfloor+1} -1.
		\end{equation*}
	\item Suppose $n\ge 3$ and $\ch \K =0$. Then all the composition factors of the socle of
		 $S(B^+)$ are of the form $\K_\lambda$ with $\lambda$ a
		 partition such that $\lambda_n=0$.
	 \item Suppose $n\ge 3$ and $\ch \K =p$. Then all the composition
		 factors of the socle of $S(B^+)$ are of the form $\K_\lambda$
		 with $\lambda$ a partition such that either $\lambda_n=0$ or
		 \begin{equation*}
			 \lambda_{n-1} \ge p^{\lfloor \log_p \lambda_n \rfloor
			 +1} -1.
		 \end{equation*}
	 \item Suppose $n=3$, $\ch\K=p\ge 3$, and $\lambda = \left(
		 \lambda_1,\lambda_2,\lambda_3
		 \right)$ satisfies
		 \begin{equation}
			 \label{estimates}
			 \lambda_1 \ge p^{\lfloor \log_p \lambda_3 \rfloor
			 +2}-1,\ \lambda_2 = 2p^{\lfloor \log_p \lambda_3\rfloor +1}-1.
		 \end{equation}
		 Then the module $\K_\lambda$ has a non-zero multiplicity in the
		 socle of $S(B^+)$.
\end{enumerate}
\end{theorem}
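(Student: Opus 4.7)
The starting point is the identification
\[
[\operatorname{soc}(S(B^+)) : \K_\lambda] \;=\; \dim \Hom_{S(B^+)}(\K_\lambda, S(B^+)) \;=\; \dim \ker p_1^t,
\]
where the first equality is standard for a finite-dimensional algebra and the second is obtained by applying the contravariant functor $(\cdot)^t = \Hom_{S(B^+)}(-,S(B^+))$ to the first two terms of the minimal projective resolution $P_1 \xrightarrow{p_1} P_0 \to \K_\lambda \to 0$. Because the socle is semisimple, ``composition factor'', ``direct summand'' and ``nonzero multiplicity'' all coincide, so each of the six statements reduces to a question about the injectivity of $p_1^t$.

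Parts (1)--(5) then follow at once from results already proved. For (1), $\lambda = (r, 0, \ldots, 0)$ gives $|I(\lambda)| = 1$, so $P_0 \cong \K_\lambda$ is simple projective, $P_1 = 0$, and $\ker p_1^t = P_0^t$ is one-dimensional. For (2) and (4), in characteristic zero with $\lambda_n \ne 0$, Proposition~\ref{lemma2(2)} yields injectivity of $p_1^t$; combined with Proposition~\ref{simple}, which forbids $\K_\nu$ with $\nu \notin \Lambda^+(n,r)$ from the socle, this gives the stated descriptions. For (3) and (5), in characteristic $p$, Proposition~\ref{lemma3} (respectively Proposition~\ref{lemma4} for $n=2$) shows that $p_1^t$ is injective precisely when condition~\eqref{cond} holds; the failure of~\eqref{cond} together with Proposition~\ref{simple} yields the stated criteria.

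For part (6) the plan is to exhibit an explicit nonzero element of $\ker p_1^t$ using Proposition~\ref{prop4.4}, case (c)(ii). Set $d := \lfloor \log_p \lambda_3 \rfloor$. I would choose $j \in J(\lambda)$ such that the tableau $T^\lambda_j$ has row $3$ constant equal to $3$ (forced by $j \ge l$), row $2$ consisting of $\lambda_2 = 2p^{d+1} - 1$ entries equal to $3$ (so $a(j) = 2p^{d+1} - 1$), and row $1$ consisting of $\lambda_1 - (p^{d+2} - 1)$ ones followed by $t_3 = p^{d+2} - 1$ threes, with $t_2 = 0$. The hypothesis $\lambda_1 \ge p^{d+2} - 1$ makes this choice possible and $T^\lambda_j$ row-semistandard. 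The $p$-adic expansion of $t_3 = p^{d+2} - 1$ is $\sum_{k=0}^{d+1} (p-1)p^k$, which is exactly the shape demanded by case (c)(ii); hence $p_1^t(\xi_{l,j}) = 0$, producing a nonzero element of $\ker p_1^t$.

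The bulk of the work is already done in earlier sections; the only genuinely new ingredients are the identification of the socle multiplicity with $\dim \ker p_1^t$ and the construction in part (6). The main obstacle is verifying that the combinatorial conditions of Proposition~\ref{prop4.4}(c)(ii) are really met by the chosen $j$, which boils down to checking the $p$-adic expansion of $p^{d+2} - 1$. The assumption $p \ge 3$ is used to ensure $\lambda_1 \ge p^{d+2} - 1 > 2p^{d+1} - 1 = \lambda_2$, so that $\lambda$ is automatically a partition, as required by Proposition~\ref{simple}.
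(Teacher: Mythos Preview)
Your proof follows the paper's almost exactly: both identify the socle multiplicity of $\K_\lambda$ with $\dim\ker p_1^t$, dispose of (1)--(5) by invoking Propositions~\ref{lemma2(2)}, \ref{lemma3}, \ref{lemma4} and~\ref{simple}, and for (6) choose $j\in J(\lambda)$ with $a(j)=2p^{d+1}-1$ and $t_3=p^{d+2}-1$ so that Proposition~\ref{prop4.4}(c)(ii) gives $p_1^t(\xi_{l,j})=0$. One minor point: your closing remark that $p\ge 3$ is needed so that $\lambda$ is a partition ``as required by Proposition~\ref{simple}'' is a misattribution, since Proposition~\ref{simple} is not used in part~(6) (you are directly exhibiting a nonzero kernel element, not appealing to a necessary condition), and in any case the stated inequalities already force $\lambda_1\ge\lambda_2>\lambda_3$ even when $p=2$; the paper itself offers no explanation for the restriction $p\ge 3$.
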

\begin{proof}
	The simple module $\K_{(r,0,\dots,0)}$ is isomorphic to its projective cover
	$S(B^+)\xi_{(r,0,\dots,0)}$.
	Thus $\Hom_{S(B^+)}(\K_{(r,0,\dots,0)}, S(B^+)) \cong
	\xi_{(r,0,\dots,0)} S(B^+)$ is non trivial.
	This shows that $\K_{(r,0,\dots,0)}$ is a direct summand of the socle of
	$S(B^+)$.
	
	Now the claims (2)-(5) follow from
	Propositions~\ref{lemma2(2)},~\ref{lemma3}, and~\ref{lemma4}.

To prove (6), we denote $\lfloor \log_p \lambda_3 \rfloor$ by $d$. Then
$p^d \le \lambda_3<p^{d+1}$. Since $\lambda$ satisfies~\eqref{estimates}
there is $j\in J(\lambda)$ such that $a(j) =\lambda_2=  2p^{d+1}-1$ and
$t_3 = t_3(j) = p^{d+2}-1$.
Then by Proposition~\ref{prop4.4}(c)(ii) we get
$p_1^t(\xi_{l,j}) =0$. This shows that $\ker(p_1^t)$ is non-trivial, and
therefore there is an embedding of $\K_\lambda$ into the socle of $S(B^+)$.
\end{proof}

\section{ Functors between different algebras}\label{functor}
A Borel-Schur algebra is \emph{triangular}, that is, its quiver does not have oriented cycles. Furthermore,  a Borel-Schur algebra $S(B^+,m,r)$ is a non-unital subalgebra
of a Borel-Schur algebra $S(B^+,n,r)$ for $m<n$. This gives rise to functors between their module categories, and one would like to understand when an  Auslander-Reiten sequence
of a module for the smaller algebra lifts to an Auslander-Reiten sequence over the larger algebra.

We study this question in a slightly more general setting.
Assume $\Lambda^* \subset \Lambda(n,r)$ is a coideal with respect to the dominance order. That is, if $\lambda \in \Lambda^*$ and ${\lambda\ledom \mu}$  then
$\mu \in \Lambda^*$. For example, if $m<n$  then $$\Lambda^*(m, r):= \{ \alpha \in \Lambda(n, r)\,|\, \alpha = (\alpha_1, \ldots, \alpha_m, 0, \ldots , 0)\}$$ is
a coideal in $\Lambda(n,r)$.   Let
$$e=e_{\Lambda^*} := \sum_{\lambda \in \Lambda^*} \xi_{\lambda}.
$$
This is an idempotent  in $A:= S(B^+,n,r)$. Then $eAe$ can be regarded as a non-unital subalgebra of $A$, see also \cite[\S~6.5]{green}. This
algebra has simple modules precisely the $\K_{\lambda}$ with $\lambda \in \Lambda^*$.
In case $\Lambda^* = \Lambda^*(m, r)$ we have
\begin{equation}
eS(B^+,n,r)e \cong S(B^+,m,r).\label{new2}
\end{equation}

We have an exact functor
\begin{align*}
        F\colon A\mbox{-mod} & \rightarrow eAe\mbox{-mod}\\
        V & \mapsto e V\\
        (\theta\colon V\to V') &\mapsto \theta|_{eV}.
\end{align*}
The functor $F$ has a left adjoint $
        G= Ae\otimes_{eAe} \colon eAe\mbox{-mod}  \rightarrow A\mbox{-mod}.$
       In
our case $G$ is   the identity functor,  since $(1-e)Ae=0$.
We get as an easy consequence:
\begin{proposition} With the above notation, the following hold:
\begin{enumerate}[(i)]
\item $FG(M)=M$ for all $M\in eAe\mbox{-mod}$;
\item $G$ is exact;
\item$G$ preserves indecomposable modules;
\item$G$ preserves simple modules.
\end{enumerate}
\end{proposition}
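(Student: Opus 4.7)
The plan is to reduce the entire proposition to the single observation that $(1-e)Ae = 0$, equivalently $Ae = eAe$; I expect this identification to be the only non-trivial point, with everything else being formal bookkeeping. To prove it, take a basis element $\xi_{i,j}$ of $A=S(B^+,n,r)$, so $i\le j$, and let $i$ have weight $\mu$, $j$ have weight $\lambda$. As recorded earlier in the paper, $i\le j$ implies $\lambda \ledom \mu$. Hence $\xi_\mu A \xi_\lambda \neq 0$ would force $\lambda \ledom \mu$; summing over $\lambda\in \Lambda^*$ and $\mu\notin \Lambda^*$, the coideal hypothesis on $\Lambda^*$ rules this out, giving $(1-e)Ae=0$.

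Once $Ae=eAe$ is established, $Ae$ becomes the regular right $eAe$-module, free of rank one on the generator $e$. Therefore $G(M) = Ae \otimes_{eAe} M$ is canonically identified with $M$ as a vector space (and as an $eAe$-module). Under this identification the left $A$-action on $G(M)$ reads $a\cdot m = (eae)\,m$, with the right-hand side computed via the $eAe$-structure of $M$ (using that $ae = eae$ inside $eAe$).

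From here all four claims follow easily. For (i): $FG(M)=eG(M)=eM=M$ since $e$ acts as the unit of $eAe$. For (ii): tensoring with the free right $eAe$-module $Ae$ is exact. For (iii) and (iv), the key is that the $A$-submodules of $G(M)$ coincide with the $eAe$-submodules of $M$: every $A$-submodule is automatically $eAe$-stable (as $eAe\subseteq A$), and conversely any $eAe$-submodule $N\subseteq M$ is $A$-stable because $a\cdot m=(eae)m\in N$ for all $a\in A$. Consequently $G(M)$ decomposes non-trivially as an $A$-module (respectively, has a non-trivial proper submodule) if and only if $M$ does as an $eAe$-module, proving both (iii) and (iv).
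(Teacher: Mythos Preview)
Your proof is correct and follows exactly the approach the paper intends: the paper asserts (without details) that $(1-e)Ae=0$ and states the proposition as ``an easy consequence,'' and you have supplied precisely those details, including the coideal argument for $(1-e)Ae=0$ that the paper leaves implicit. Your identification of the $A$-submodule lattice of $G(M)$ with the $eAe$-submodule lattice of $M$ is the clean way to get (iii) and (iv) simultaneously.
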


We will use the following notation. If $\lambda\in \Lambda(n,r)$ belongs to the subset $\Lambda^*$ we write $\bar{\lambda}$ if we view it as a weight of the algebra
$eAe$, and we write $\K_{\bar{\lambda}}$ for the simple $eAe$-module labelled by $\lambda$.  Then $G(\K_{\bar{\lambda}}) = \K_{\lambda}$.
Consider the Auslander-Reiten sequence in $eAe$-mod
$$0\to \tau\K_{\bar{\lambda}}\to E(\bar{\lambda}) \to \K_{\bar{\lambda}}\to 0.
$$
By applying  the  functor $G$ we obtain an exact sequence
\begin{equation}\label{New}
0\to G(\tau\K_{\bar{\lambda}})\to G(E(\bar{\lambda}))\to
\K_{\lambda}\to 0.
\end{equation}

\begin{proposition}\label{ARiff}   The sequence~(\ref{New}) is an Auslander-Reiten sequence if and only if
$G(\tau \K_{\bar{\lambda}}) \cong  \tau \K_{\lambda}$.
\end{proposition}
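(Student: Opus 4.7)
The plan is to prove the two implications separately. The forward direction is immediate from uniqueness of Auslander-Reiten sequences: if (\ref{New}) is an Auslander-Reiten sequence ending with $\K_\lambda$, then the uniqueness clause of the Auslander-Reiten theorem forces its left term to be isomorphic to $\tau\K_\lambda$, giving $G(\tau\K_{\bar\lambda})\cong \tau\K_\lambda$.

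For the backward direction, assume $G(\tau\K_{\bar\lambda})\cong\tau\K_\lambda$. The strategy is to invoke Remark~\ref{be}: by the Auslander-Reiten formula we have $\mathrm{Ext}^1_A(\K_\lambda,\tau\K_\lambda)\cong \K$, so any non-split short exact sequence of $A$-modules with left term $\tau\K_\lambda$ and right term $\K_\lambda$ is automatically an Auslander-Reiten sequence. The sequence (\ref{New}) is exact because $G$ is exact (by the preceding proposition), and by hypothesis its end terms are $\tau\K_\lambda$ and $\K_\lambda$, so it only remains to verify that (\ref{New}) does not split.

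To check non-splitness, I argue by contradiction. Suppose (\ref{New}) splits. Applying the exact additive functor $F$ termwise and using the identity $FG=\mathrm{id}$ on $eAe\mbox{-mod}$ recorded in the preceding proposition, we would obtain the original Auslander-Reiten sequence
$$0\to \tau\K_{\bar\lambda}\to E(\bar\lambda)\to \K_{\bar\lambda}\to 0$$
as a split sequence, contradicting condition (i) in the definition of an Auslander-Reiten sequence. Hence (\ref{New}) is non-split, and Remark~\ref{be} then upgrades it to an Auslander-Reiten sequence.

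The only step requiring any care is this transfer of non-splitness through $F$, but once one uses $FG=\mathrm{id}$ together with exactness of $F$, the argument is essentially mechanical; the real content of the proposition lies in the criterion supplied by Remark~\ref{be}, which is what lets a mere non-split extension with the right end terms be recognized as Auslander-Reiten.
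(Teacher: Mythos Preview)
Your proof is correct and follows essentially the same approach as the paper: the forward direction is immediate from uniqueness of Auslander--Reiten sequences, and for the converse you verify non-splitness by applying $F$ (using $FG=\mathrm{id}$) to recover the original Auslander--Reiten sequence in $eAe$-mod, then invoke Remark~\ref{be} to conclude. The paper's proof is identical in structure, just slightly more terse.
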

\begin{proof}One direction is clear. For the converse, assume $G(\tau \K_{\bar{\lambda}}) \cong   \tau\K_{\lambda}$.  The exact sequence~(\ref{New}) is non-split since,
if we apply $F$, we get the Auslander-Reiten sequence in $eAe$-mod.  By~Remark \ref{be},  any non-split exact sequence with end terms
$\tau \K_{\lambda}$ and $\K_{\lambda}$ is the Auslander-Reiten sequence. This proves the claim.
\end{proof}

We give now an example where the sequence~(\ref{New}) is not an Auslander-Reiten sequence.

\begin{example} Suppose char$(\K)\neq 2$.   Assume $\Lambda^* = \Lambda(2, 3)$ viewed as a subset of $\Lambda(3, 3)$, so that $eAe\cong S(B^+,2,3)$ and $A=S(B^+,3,3)$. Consider the weight
$\lambda = (1, 2,0)$.
From the construction, we have the exact sequence
$$0\to \tau \K_{\lambda} \to DP_1^t\stackrel{Dp_1^t}\to D P_0^t.
$$ In this case, $DP_1^t$ is the injective $I_{(2,1,0)}$ with socle $\K_{(2,1,0)}$  and $DP_0^t$ is the injective $I_{(1,2,0)}$.

There is a uniserial module $U$ of dimension $2$ with top isomorphic to $\K_{(2,0,1)}$ and socle isomorphic to $\K_{(2,1,0)}$. This is a submodule
of $I_{(2,1,0)}$ and it is contained in the kernel of $Dp_1^t$, as the socle of $I_{(1,2,0)}$ is not a composition factor of $U$.
That is, $U$ is a submodule of $\tau\K_{\lambda}$ and we see that $\tau\K_{\lambda}$ is not of the form $G(M)$ for any $M$. So by the Proposition~\ref{ARiff},
(\ref{New}) cannot be an Auslander-Reiten sequence.
\end{example}

\section{Finite type classification }\label{finitetype}

In this section we will determine precisely which Borel-Schur algebras are of finite type.
We start by summarising general results which can be used to identify representation type.
 For the moment, assume $A$ is some finite-dimensional algebra.
Recall that $A$ has finite type if there are only finitely many indecomposable $A$-modules up to isomorphism.
Otherwise, $A$ has infinite type. If $A=\K\cQ$, where $\cQ$ is a quiver with no oriented cycles, then
$A$ has finite type if and only if $\cQ$ is a disjoint union of Dynkin quivers,
of types ADE. This is  Gabriel's Theorem (see~\cite{gabriel_Dynkin}).
In some cases, the algebra will be of the form $\K\cQ/\cI$ where $\cI$ is generated by a single commutativity relation.
We will use a result by Ringel (see \cite{Ri}) to identify infinite type.
Furthermore, the following is a general reduction method
(see for example Lemma~1 in \cite{bongartz}).

\begin{lemma}\label{eAe}
 Assume $e$ is an idempotent of $A$. If $eAe$ has infinite type then $A$ has infinite type.
 \end{lemma}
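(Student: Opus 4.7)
The plan is to use the adjoint pair of functors
\[
F\colon A\mbox{-mod}\to eAe\mbox{-mod},\ M\mapsto eM,\qquad G\colon eAe\mbox{-mod}\to A\mbox{-mod},\ N\mapsto Ae\otimes_{eAe} N,
\]
already introduced in Section~\ref{functor}, but \emph{without} the simplifying assumption $(1-e)Ae=0$ used there. First I would verify the unit/counit identity $F\circ G\cong \id$ on $eAe\mbox{-mod}$ by the routine computation
\[
F(G(N))=e(Ae\otimes_{eAe}N)=eAe\otimes_{eAe}N\cong N,
\]
valid for every $eAe$-module $N$. Note also that $G$ sends finite dimensional $eAe$-modules to finite dimensional $A$-modules, since $Ae$ is finite dimensional over $\K$.

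Next, I would show that for every indecomposable $eAe$-module $N$, there exists an indecomposable $A$-module $M$ with $eM\cong N$. Decompose $G(N)=M_1\oplus\dots\oplus M_k$ into indecomposable $A$-modules; applying $F$ (which is exact and additive) gives
\[
N\cong F(G(N))\cong eM_1\oplus\dots\oplus eM_k.
\]
Since $N$ is indecomposable and non-zero, exactly one summand $eM_i$ is non-zero, and it must be isomorphic to $N$. Set $M:=M_i$.

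Finally, I assemble these pieces. Suppose $eAe$ has infinite type; choose an infinite family $\{N_\alpha\}_{\alpha\in \cI}$ of pairwise non-isomorphic indecomposable $eAe$-modules, and for each $\alpha$ pick an indecomposable $A$-module $M_\alpha$ with $eM_\alpha\cong N_\alpha$ by the previous step. If $M_\alpha\cong M_\beta$ as $A$-modules, then applying $F$ yields $N_\alpha\cong eM_\alpha\cong eM_\beta\cong N_\beta$, forcing $\alpha=\beta$. Hence the family $\{M_\alpha\}_{\alpha\in\cI}$ consists of pairwise non-isomorphic indecomposable $A$-modules, so $A$ has infinite type.

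There is really no substantial obstacle here; the argument is entirely formal. The only point that requires a word of care is that $F$ commutes with finite direct sums and is exact (so that $F(G(N))$ splits into the $eM_i$), but both properties are immediate from $F=\Hom_A(Ae,-)\cong e\cdot(-)$ together with the fact that $Ae$ is a finitely generated projective $A$-module on the right $eAe$-side.
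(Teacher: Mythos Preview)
Your argument is correct and is the standard one. The paper itself does not prove this lemma at all; it simply states it and refers the reader to Lemma~1 of \cite{bongartz}. So there is nothing to compare against beyond noting that you have supplied a valid self-contained proof where the paper only gives a citation.

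One small wording issue at the very end: you justify exactness of $F$ by saying ``$Ae$ is a finitely generated projective $A$-module on the right $eAe$-side''. What you actually need (and what is true) is that $Ae$ is projective as a \emph{left $A$-module}, being a direct summand of $A$; the right $eAe$-structure is irrelevant for exactness of $F=\Hom_A(Ae,-)$. In any case, your argument only uses that $F$ preserves finite direct sums, which is immediate since $F=e\cdot(-)$, so exactness is not even needed.
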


We will combine this result with the idea of regular covering of quivers.
Let $\cQ$ be a quiver and $G$ a group acting freely on the right of $\cQ$.  Then one has the
quotient quiver
$\cQ' := \left.\raisebox{0.3ex}{$\cQ$}\middle/\!\raisebox{-0.3ex}{$G$}\right. $ and the
canonical projection
$\phi \colon \cQ \to
\cQ'$.
In that situation one says that $\cQ$ is  a \emph{regular covering} of
$\cQ'$.

The action of $G$ on $\cQ$ induces an action of $G$ on the path category
$P(\cQ)$ of the quiver $\cQ$. It is obvious that
$\left.\raisebox{0.3ex}{$P(\cQ)$}\middle/\!\raisebox{-0.3ex}{$G$}\right. $ is
isomorphic to the path category $P(\cQ')$.
In particular, following Gabriel~\cite{gabriel_inventiones}, we can define
the pushdown functor $\phi_* \colon P(\cQ)\mbox{-mod} \to P(\cQ')\mbox{-mod}$.
In our case the definition takes the following form.
 Let $V$ be a representation
of $\cQ$, then for any vertex $xG\in \cQ'$
\begin{equation*}
	\phi_* (V)_{xG} := \bigoplus_{g \in G} V_{xg};
\end{equation*}
and for every arrow $x \xrightarrow{a} y$ in $\cQ$, we define the map
\begin{equation*}
\phi_* (V)_{aG}\colon \phi_*(V)_{xG}\to \phi_*(V)_{yG}
\end{equation*}
	to be the matrix with zeros in positions $(g_1,g_2)$ if
$g_1\not= g_2$ and $ag$ at position $(g,g)$.

From Lemma~3.5 in \cite{gabriel_LNM}
we get
\begin{theorem}
	\label{covering}
	Let $\cQ$ be a quiver and $G$ a group acting freely on $\cQ$.
Suppose that
$V$ is a finite dimensional indecomposable representation of $\cQ$ such that
$g_* V \not\cong V$, for every $g\in G$,  $g\neq 1_G$.
Then $\phi_* V$ is indecomposable. Moreover, if $W\not\cong V$ is a representation of
$\cQ$ such that $\phi_* W \cong \phi_* V$, then there is $g\in G$, $g\not=1_G$, such
that $g_* V \cong W$.
\end{theorem}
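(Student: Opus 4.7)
The plan is to introduce a pullback functor $\phi^{*}$ right adjoint to $\phi_{*}$, identify $\phi^{*}\phi_{*}V$ with a direct sum indexed by $G$, and then read off both assertions from the resulting description of the endomorphism algebra of $\phi_{*}V$.

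First I would define $\phi^{*}\colon P(\cQ')\mbox{-mod}\to P(\cQ)\mbox{-mod}$ by $(\phi^{*}W)_{x} = W_{xG}$ on vertices and $(\phi^{*}W)_{a} = W_{aG}$ on arrows, and verify the adjunction
\[
\Hom_{\cQ'}(\phi_{*}V,W)\cong \Hom_{\cQ}(V,\phi^{*}W),
\]
which is essentially formal given the definition of $\phi_{*}$ and the freeness of the $G$-action. A direct bookkeeping over $G$-orbits then gives the key identity
\[
\phi^{*}\phi_{*}V \cong \bigoplus_{g\in G} g_{*}V,
\]
both on vertices and on arrows; the freeness of the action ensures that no two summands indexed by distinct $g$ collapse.

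To establish indecomposability, I combine the two previous displays to obtain
\[
\End_{\cQ'}(\phi_{*}V) \cong \Hom_{\cQ}(V,\phi^{*}\phi_{*}V) \cong \bigoplus_{g\in G}\Hom_{\cQ}(V,g_{*}V),
\]
and this carries a natural $G$-graded multiplication whose identity component is $\End_{\cQ}(V)$. Since $V$ is finite dimensional and indecomposable, $\End_{\cQ}(V)$ is local. The hypothesis $g_{*}V\not\cong V$ for $g\neq 1_{G}$ makes each off-diagonal summand $\Hom_{\cQ}(V,g_{*}V)$ consist of non-isomorphisms; a short argument then shows $\End(\phi_{*}V)$ is itself local, so $\phi_{*}V$ admits only trivial idempotents and is indecomposable. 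For the second assertion, applying $\phi^{*}$ to an isomorphism $\phi_{*}W\cong\phi_{*}V$ yields $\bigoplus_{g} g_{*}W \cong \bigoplus_{g} g_{*}V$, so by Krull--Schmidt $W$ is a direct sum of some of the $g_{*}V$. The indecomposability of $\phi_{*}W\cong\phi_{*}V$, combined with $\phi_{*}g_{*}V\cong\phi_{*}V$ for every $g$, forces $W$ to be a single $g_{*}V$, and $W\not\cong V$ gives $g\neq 1_{G}$.

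The main obstacle is making the locality argument for $\End(\phi_{*}V)$ precise: one must show that an element $e = e_{1_{G}} + \sum_{g\neq 1_{G}}e_{g}$ of $\bigoplus_{g}\Hom_{\cQ}(V,g_{*}V)$ is invertible exactly when $e_{1_{G}}$ is invertible in $\End_{\cQ}(V)$, so that the non-identity components all lie in the Jacobson radical. This is where the freeness of the $G$-action and the hypothesis $g_{*}V\not\cong V$ enter decisively, preventing the off-diagonal pieces from contributing units.
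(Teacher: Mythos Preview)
The paper does not give its own proof of Theorem~\ref{covering}; it simply quotes the result as a consequence of Lemma~3.5 in Gabriel's lecture notes~\cite{gabriel_LNM}. So there is nothing in the paper to compare your argument against.

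That said, your outline is the standard covering-theory argument and is correct. A couple of remarks on the point you yourself flag as the main obstacle. Rather than trying to show directly that each off-diagonal $e_{g}$ lies in the Jacobson radical, it is cleaner to set $A=\End_{\cQ'}(\phi_{*}V)$ with its $G$-grading and check that
\[
I \;=\; \{\,a\in A \;:\; a_{1_{G}}\in \rad\End_{\cQ}(V)\,\}
\]
is a two-sided ideal with $A/I\cong \End_{\cQ}(V)/\rad\End_{\cQ}(V)$, a division ring. The only nontrivial verification is that any product landing in degree~$1_{G}$ via $A_{g}\cdot A_{g^{-1}}$ (for $g\neq 1_{G}$) lies in $\rad\End_{\cQ}(V)$: such a product is a composite $V\to g_{*}V\to V$ of non-isomorphisms between indecomposables, hence a non-unit in the local ring $\End_{\cQ}(V)$. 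This gives locality of $A$ at once, and your invertibility criterion ``$e$ invertible iff $e_{1_{G}}$ invertible'' follows.

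For the second assertion your Krull--Schmidt step is fine: $\phi_{*}W\cong\phi_{*}V$ forces $W$ to be finite dimensional (dimensions add over each $G$-orbit), and the summands $g_{*}V$ all have local endomorphism rings, so Azumaya's form of Krull--Schmidt applies even when $G$ is infinite.
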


 Returning to Borel-Schur algebras, we have already seen in~\eqref{new2}  that $S(B^+,m,r)$ $ \cong $ $eS(B^+,n,r)e,$ for some idempotent $e\in S(B^+,n,r),$ whenever $n > m$, and
 we will apply Lemma~\ref{eAe} in this case. It also applies to relate $S(B^+,2,r)$ with $S(B^+,2,r')$ for $r'\geq r$. Namely we have:

 \begin{lemma}\label{yu} There is an idempotent $e$ of  $S(B^+,2,r+1)$ such that $$eS(B^+,2,r+1)e\cong S(B^+,2,r).$$
 \end{lemma}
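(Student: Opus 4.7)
The plan is to exhibit an explicit idempotent and check the isomorphism by comparing one-dimensional weight pieces and their structure constants, both of which are controlled by Lemma~\ref{lemma2}.

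Take
\[
e := \sum_{\lambda \in \Lambda(2, r+1),\ \lambda_1 \ge 1} \xi_\lambda,
\]
so that the weights of $eS(B^+, 2, r+1)e$ are parametrised, via $\mu \mapsto \mu^+ := (\mu_1 + 1, \mu_2)$, by $\Lambda(2, r)$, and $\mu_1 \ge \nu_1$ iff $\mu^+_1 \ge \nu^+_1$. For any $\alpha \in \Lambda(2, r)$, I would first check directly that the only $j \in J(\alpha)$ of weight $\beta$ is $j = l(\beta)$, and that this happens precisely when $\alpha_1 \ge \beta_1$ (since for $n=2$ a row-semistandard tableau with entries in $\{1,2\}$ is determined by the number of $1$'s in each row). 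By Lemma~\ref{lemma21}, $\xi_\alpha S(B^+, 2, r) \xi_\beta$ is then one-dimensional with basis $\xi_{l(\alpha), l(\beta)}$ when $\alpha_1 \ge \beta_1$, and zero otherwise; the same holds for $\xi_{\alpha^+}(eS(B^+,2,r+1)e)\xi_{\beta^+}$ with basis $\xi_{l(\alpha^+), l(\beta^+)}$.

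The next step is to define $\phi \colon S(B^+, 2, r) \to eS(B^+, 2, r+1)e$ on the basis by $\phi(\xi_{l(\alpha), l(\beta)}) := \xi_{l(\alpha^+), l(\beta^+)}$. This is automatically a $\K$-linear bijection carrying the identity of $S(B^+, 2, r)$ to $e$. To verify multiplicativity, observe that $\xi_{l(\alpha), l(\beta)} \cdot \xi_{l(\beta'), l(\gamma)}$ vanishes unless $\beta = \beta'$ (because $\xi_\beta \xi_{\beta'} = 0$ for distinct weights), so only the case $\beta' = \beta$ needs attention. Since $n = 2$, the row-constancy hypothesis of Lemma~\ref{lemma2} holds automatically (its final sentence with $\nu = n-1 = 1$); applying it with $\lambda = \beta$, $\nu = 1$, $m = \alpha_1 - \beta_1$, and noting that row~$2$ of $T^\beta_{l(\gamma)}$ is constant equal to $2$, with $a = \beta_1 - \gamma_1$ occurrences of $2$ in row~$1$, one gets
\[
\xi_{l(\alpha), l(\beta)} \cdot \xi_{l(\beta), l(\gamma)} = \binom{\alpha_1 - \gamma_1}{\alpha_1 - \beta_1} \, \xi_{l(\alpha), l(\gamma)}.
\]
Running the identical calculation in $S(B^+, 2, r+1)$ with $\alpha^+, \beta^+, \gamma^+$ replacing $\alpha, \beta, \gamma$ yields the same binomial coefficient $\binom{\alpha_1 - \gamma_1}{\alpha_1 - \beta_1}$, since shifting all first components by $1$ cancels in both numerator and denominator. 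Thus $\phi$ respects the structure constants on basis elements and is an algebra isomorphism.

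I expect no serious obstacle here: the decisive point is that for $n = 2$ every $\xi_\alpha S(B^+, 2, r) \xi_\beta$ is at most one-dimensional, so the entire multiplication table reduces to a single binomial coefficient depending only on the differences $\alpha_1 - \beta_1$ and $\beta_1 - \gamma_1$, which are manifestly invariant under $\alpha \mapsto \alpha^+$. The only care needed is to record correctly the bijection of bases and to cite Lemma~\ref{lemma2} in the $\nu = n - 1$ case where the combinatorial hypothesis is free.
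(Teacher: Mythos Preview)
Your proof is correct and follows essentially the same approach as the paper: the same idempotent $e=\sum_{\lambda\in\Lambda(2,r)}\xi_{\hat\lambda}$ (in the paper's notation $\hat\lambda=(\lambda_1+1,\lambda_2)$, your $\lambda^+$) and the same basis bijection $\xi_{l(\alpha),l(\beta)}\mapsto\xi_{l(\alpha^+),l(\beta^+)}$. The only difference is that the paper outsources the multiplicativity check to Lemma~22 of \cite{Y}, whereas you carry it out explicitly using Lemma~\ref{lemma2}, obtaining the structure constant $\binom{\alpha_1-\gamma_1}{\alpha_1-\beta_1}$ and observing it is invariant under the shift; this makes your argument self-contained within the present paper.
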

\begin{proof}
 The usual basis of $S(B^+,2,r)$  can be parametrized as $\xi_{\nu, \mu}$ where $\nu, \mu\in \Lambda(2,r)$ and  $\mu\ledom \nu$, see \cite{Y}.  If $\lambda\in \Lambda(2,r)$, then
 set $\hat{\lambda} = (\lambda_1+1, \lambda_2)\in \Lambda(2, r+1)$. Let
 $$e:= \sum_{\lambda\in \Lambda(2,r)} \xi_{\hat{\lambda}}.$$
 Then we claim that $eS(B^+,2,r+1)e\cong S(B^+,2,r)$. Namely the  linear map, defined on the basis by
 $\xi_{\mu, \lambda}\mapsto \xi_{\hat{\mu}, \hat{\lambda}} $, is bijective and, by Lemma 22 of \cite{Y}, it is an algebra map.
\end{proof}

Some of the Borel-Schur algebras  $S(B^+,2,r)$ are special biserial.
\begin{definition} An algebra of the form $A=K\cQ/\cI$
is special biserial if \\
(i) For each vertex $i$ of $\cQ$, there are at most two arrows starting at $i$ and at most two arrows ending at $i$.\\
(ii) For each arrow $\alpha$ of $\cQ$ there is at most one arrow $\beta$ and one arrow $\gamma$ such that $\alpha\beta$ and $\gamma\alpha$ do not belong to
$\cI$.
\end{definition}

The indecomposable modules of such algebras are classified in \cite{wald}. The
ones which are not simple and not projective can be parametrised by admissible words in the arrows and their inverses modulo suitable equivalence relations. A word is admissible if no linear subword occurs
in some relation in $\mathcal{I}$. In our cases there will only be finitely many admissible words
so that we can deduce finite type.

 We can now state the classification of finite type for Borel-Schur algebras.

 \begin{theorem}
	 \label{reptype}
	 \begin{enumerate}
 \item The algebra $S(B^+,2,r)$ has finite type if and only if one of the following holds:
\begin{enumerate}
\item[(i)]  ${\rm char}(K)=0$;
 \item[(ii)] ${\rm char}(K)=p\geq 5$ and $r\leq p$;
 \item[(iii)] ${\rm char}(K)=3$ and $r\leq 4$;
 \item[(iv)] ${\rm char}(K)=2$ and $r\leq 3$.
 \end{enumerate}
\item For $n\geq 3$, the algebra $S(B^+,n,r)$ has finite type if and only if $r=1$.
\end{enumerate} \end{theorem}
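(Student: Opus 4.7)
The plan is to analyze the Gabriel quiver (with relations) of $S(B^+, n, r)$ coming from the minimal projective presentation~\eqref{eq03}: the vertices are compositions $\lambda \in \Lambda(n, r)$, and from $\lambda$ there is an arrow to each $\lambda(\nu, 1)$ in characteristic zero, or to each $\lambda(\nu, p^{d})$ with $1 \le p^d \le \lambda_{\nu+1}$ in characteristic~$p$; the quadratic relations come from~\eqref{0.0}. From this explicit presentation the representation type can be read off through a combination of Gabriel's theorem, Ringel's infinite-type criterion from~\cite{Ri}, Lemma~\ref{eAe}, Lemma~\ref{yu}, the covering theorem~\ref{covering}, and the classification of indecomposable modules for special biserial algebras from~\cite{wald}.

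For the finite type direction, when $r = 1$ from each vertex $e_i$ the only outgoing arrow goes to $e_{i-1}$, so the quiver is a linearly oriented $A_n$ and the algebra is hereditary of Dynkin type; this disposes of the easy direction of Part~(2). The analogous computation for $n = 2$ and $\ch \K = 0$ (only one arrow $(r-k, k) \to (r-k+1, k-1)$ per vertex) yields a linearly oriented $A_{r+1}$ and handles case~(1)(i). For $n = 2$ and $\ch \K = p$ within the stated bounds, I would describe the quiver explicitly (multiple arrows $(r-k, k) \to (r-k+p^d, k-p^d)$ for each admissible $d$), check that together with the relations from~\eqref{0.0} the algebra is special biserial, and invoke~\cite{wald} to produce a finite list of string modules and confirm that no band occurs. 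The bounds $r \le p$ for $p \ge 5$, $r \le 4$ for $p = 3$, and $r \le 3$ for $p = 2$ are precisely the thresholds below which the extra $p$-power arrows either do not yet appear or can be shown to leave the string/band enumeration finite.

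For the infinite type direction the strategy is a reduction to small critical subalgebras. For $n \ge 3, r \ge 2$, I would take the idempotent $e = \sum_{\lambda \in \Lambda^*}\xi_\lambda$ for a sub-coideal $\Lambda^* \subset \Lambda(n, r)$ of four elements forming a commutative diamond in the quiver; then $eAe$, which by Lemma~\ref{eAe} inherits infinite type, is a path algebra modulo a single commutativity relation whose Tits form is indefinite, so Ringel's criterion~\cite{Ri} applies. Larger $n$ and $r$ follow again by Lemma~\ref{eAe}. For $n = 2$, $\ch \K = p$, and $r$ one above the stated bound, Lemma~\ref{yu} embeds the critical boundary algebra; the newly appearing $p$-power arrow combines with pre-existing arrows to create a band in the sense of special biserial algebras, giving the required one-parameter family. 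Larger $r$ in this range then follows from iterated application of Lemma~\ref{yu} together with Lemma~\ref{eAe}.

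The main obstacle, as I see it, is verifying the boundary cases for $n = 2$ in positive characteristic. For $p \ge 5$ and $r = p$ the underlying graph of the quiver already acquires a cycle through the extra arrow $(0, p) \to (p, 0)$, so one must carefully book-keep the relations from~\eqref{0.0} to confirm the absence of any band; the symmetric task of exhibiting a band at $r = p + 1$ is delicate in the same way. The small-characteristic cases $p = 2, 3$ are intrinsically the hardest, because the $p$-power arrows proliferate quickly and the special biserial analysis crowds together; here I would expect to fall back on the covering theorem~\ref{covering}, choosing a Galois cover whose representations can be enumerated, and then transferring finite- or infinite-type information via the pushdown functor.
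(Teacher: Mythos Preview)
Your outline matches the paper's overall architecture, but there are two places where the argument you sketch would not go through.

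For $n\ge 3$, $r\ge 2$: the four-vertex diamond $\{(r-2,1,1),(r-1,0,1),(r-2,2,0),(r-1,1,0)\}$ is exactly what the paper uses, but the square does \emph{not} commute in $S(B^+)$. A direct computation with~\eqref{0.0} shows that the two length-two paths from $(r-2,1,1)$ to $(r-1,1,0)$ are linearly independent; equivalently $\dim \xi_{(r-1,1,0)}S(B^+)\xi_{(r-2,1,1)}=2$. Hence $eAe$ is the \emph{hereditary} path algebra of an affine $\widetilde A_3$ quiver, and infinite type follows from Gabriel's theorem. Your proposed route via ``a single commutativity relation whose Tits form is indefinite'' fails on both counts: the relation is absent, and the commutative-square algebra is in fact representation-finite.

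For $n=2$ in positive characteristic: once the $p$-power arrows appear, the Borel-Schur algebras are in general \emph{not} special biserial. The commutativity relation $\alpha_0\beta_1=\beta_0\alpha_p$ coexists with a nonzero product $\alpha_{p-1}\alpha_p$, so two distinct arrows compose nontrivially with $\alpha_p$, violating condition~(ii). In particular your plan breaks at the finite-type boundary case $p=3$, $r=4$: the paper handles this case not by string combinatorics but by computing the Auslander-Reiten quiver directly and invoking Auslander's theorem that a finite connected component exhausts all indecomposables. On the infinite-type side the paper never argues via bands: for $p\ge 7$, $r=p+1$ it passes to an idempotent subalgebra isomorphic to quiver~32 on Ringel's list; for $p\in\{2,3,5\}$ it uses the $\Sigma_2$-covering machinery of Theorem~\ref{covering}, which you mention only as a fallback. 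Those covering arguments are not a contingency; they are the substance of the proof for small~$p$.
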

\begin{proof}  (1)  We show first that the algebras listed above have finite type.

\emph{ (a)}  If char$(\K)=0$, or if char$(\K)=p$ and $r<p$ then $S(B^+,2,r)$ is isomorphic to $\K\cQ$ where $\cQ$ is the Dynkin quiver of type $A_{r+1}$ with linear orientation.
This follows from Lemma~22 and Proposition~24 in \cite{Y}. By Gabriel's Theorem, the algebra has finite type.

 From now we assume that $\K$ has characteristic $p$.
 We label the quiver of $S(B^+,2,r)$ by $0, 1, \ldots, r$ writing $i$ for the composition $(r-i, i)$.

\emph{ (b)} If $r=p$ then $ S(B^+,2,p)$ has finite type:  the algebra has the form
\begin{equation*}
	\xygraph{
		!{<0ex,0ex>;<8ex,0ex>:<0ex,0.3ex>::}
		!{(0,6) } :_(-0.1){p}_(1){p-1}^{\alpha_{p-1}}[l]
		:_(1){p-2}^{\alpha_{p-2}}[l]:@{.}_(1){2}[l]:_(1){1}^{\alpha_{1}}[l]:_(1.1){0}^{\alpha_{0}}[l]:@{<-}@/^7ex/[rrrrr]^{\beta}
	}
\end{equation*}
with $\alpha_i: i+1\to i$ and
$\beta:p\to 0$,
 where  the only relation is that the product of all the $\alpha_i$ is zero.
 This is (trivially)  special biserial.  Any admissible word is a subword of
 $\alpha_1\alpha_2\ldots \alpha_{p-1}\beta^{-1}\alpha_0\alpha_1\ldots \alpha_{p-2}$ (or its inverse). Hence there are only finitely many admissible words and
 the algebra has finite type.

\emph{ (c)} The algebra $S(B^+,2,3)$ for $p=2$ also is special biserial.
 If $\alpha_i$ is the arrow $i+1\to i$ and $\beta_i$ is the arrow $i+2\to i$, then the admissible words are those which do not
 have a subword $\alpha_i\alpha_{i+1}$ or
 $\beta_1\alpha_2$ or $\alpha_0\beta_2$. The length of admissible words is bounded (by 6). So the algebra has finite type.

\begin{figure}
	\includegraphics[trim=4cm 4cm 6cm 4cm]{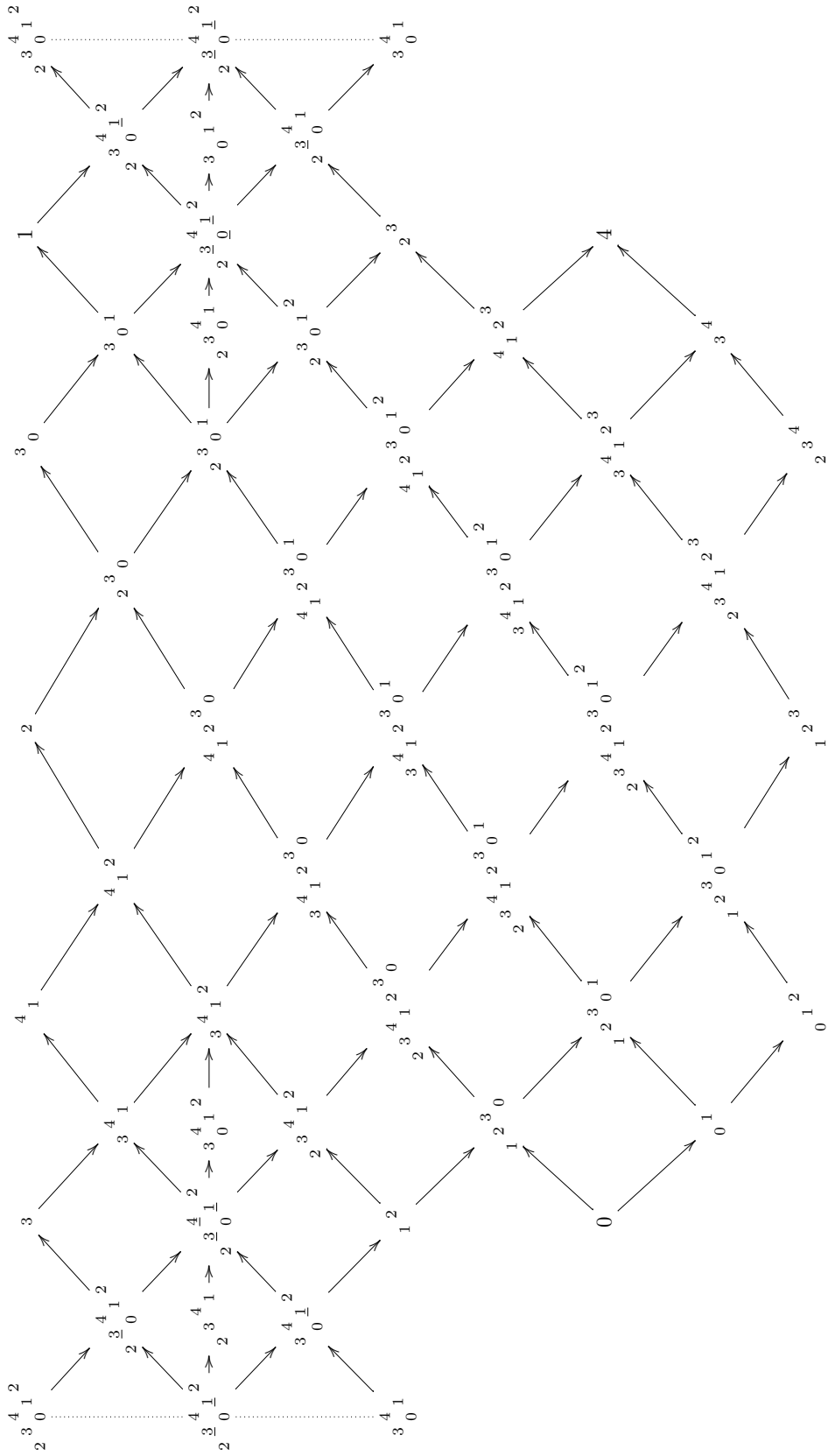}
	\caption{Auslander-Reiten quiver for $S(B^+,2,4)$, $p=3$.}
	\label{fig:AR}
\end{figure}

 \emph{(d)} The algebra $S(B^+,2,4)$ has finite type for $p=3$.  To prove this,
 we calculate   Auslander-Reiten sequences, and we find  that the
 Auslander-Reiten quiver has a finite component, a drawing of which we include.
 Auslander's Theorem (see for example \cite{Ri})  states that if the
 Auslander-Reiten quiver of an indecomposable algebra has a finite component then
 this component contains precisely all indecomposable modules, and hence the algebra has finite type.
  Recall that any  $S(B^+,n,r)$ is indecomposable.
 Hence $S(B^+,2,4)$ has finite type.

 Now we find four classes of algebras and show they have infinite type.

\emph{ (e) } Consider $S(B^+,2,p+1)$ for $p\geq 7$.
Define
\begin{equation*}
	e = \xi_{(p+1,0)} + \xi_{(p,1)} + \xi_{(p-1,2)} + \xi_{(p-2,3)}+
	\xi_{(3,p-2)}  + \xi_{(2,p-1)} +
	\xi_{(1,p)} + \xi_{(0,p+1)}.
\end{equation*}
Then the algebra $eS(B^+,2,p+1)e$ is the quiver algebra of the quiver
\begin{equation}
	\label{Ri32}
	\xygraph{
		!{<0ex,0ex>;<4ex,0ex>:<0ex,4ex>::}
		[rrrrrr]
		:^(0){3}^(1){2}[l]:^(1){1}[l]="r":^(1.2){0}[l]="b"
		[ur]="t":_(1){p}[l]="l":_(1){p-1}[l]:_(1.4){\phantom{p-}p-2}[l] "t":|(-0.3){\phantom{1+}p+1}"r" "l":"b"
	}	
\end{equation}
  with commuting square and no other relation.
  The quiver~\eqref{Ri32} is number $32$ in  Ringel's
  list~\cite{Ri} and the algebra  has infinite type.

  \emph{(f)} Consider $S(B^+,2,6)$ for $p=5$. It is the quiver algebra of
  the quiver
\begin{equation}
	\label{quiver265}
		\xygraph{
!{<0cm,0cm>;<6ex,0ex>:<0cm,6ex>::}
[rrrrr]:^(1)5_-{\alpha_5}^(0)6[l]:^-{\alpha_4}_(1)4[l]:^-{\alpha_3}_(1)3[l]:^-{\alpha_2}_(1)2[l]:_-{\alpha_1}^(1)1[l]:^-{\alpha_0}^(1)0[l]
[rrrrrr]:@/^6ex/^{\beta_1}[lllll]
[rrrr]:@/_6ex/_{\beta_0}[lllll]
}
\end{equation}
with the relations
\begin{equation}
	\label{rel265}
	\begin{aligned}
\alpha_0	\alpha_1	\alpha_2 \alpha_3 \alpha_4 & = 0, &
		\alpha_1\alpha_2 \alpha_3 \alpha_4 \alpha_5 & = 0, &
		\alpha_0 \beta_1 & = \beta_0 \alpha_5.
	\end{aligned}
\end{equation}
Let us consider the quiver
\begin{equation}
	\label{cover265}
	\xygraph{
!{<0cm,0cm>;<0.7cm,0cm>:<0cm,0.7cm>::}
[drrrrrrrrrrr]
:_(0){3''}_(1){2''}[l]:_(0.8){1''}[l]:^(1.0){0''}[dl]:@{<-}[ul]:_(0){5''}_(1){4''}[l]:_(1){3'}[l]:_(1){2'}[l]:_(0.8){1'}[l]:^(1.0){0'}[ld]:@{<-}[ul]:_(0){5'}_(1){4'}[l]:@{-}[d]:@{-}@/_0.3cm/[dr]:@{-}
[rrrrrrrrr]:@{-}@/_0.3cm/[ur]:[u]
[ll]:@{<-}_(1.3){6''}[ul]:[dl]
[llll]:@{<-}_(1.3){6'}[ul]:[dl]
}
\end{equation}
with an action of the symmetric group $\Sigma_2$ given by
interchanging $'$ with $''$.
Then the quiver \eqref{quiver265} is a quotient of \eqref{cover265} under this
action.
The full subquiver of~\eqref{cover265} spanned by the vertices
$3'$, and $i''$ with $0\le i\le 6$ is isomorphic to the quiver~\eqref{Ri32}.
Therefore there is an infinite set $\mathcal{J}$ of pairwise non-isomorphic indecomposable
representations $V$ of the quiver~\eqref{cover265} such that $V_{i'} = 0$ for $i\not=3$,
and the square
\begin{equation*}
	\xymatrix@R0.5cm@C0.5cm{
&	V_{6''} \ar[rd]\ar[ld]\\
V_{5''} \ar[rd]&& V_{1''}\ar[ld] \\
& V_{0''}
	}
\end{equation*}
commutes.
It is easy to check that for every such $V$ the representation $\phi_*
(V)$ of~\eqref{quiver265} satisfies relations~\eqref{rel265}.
Moreover, $\phi_*(V)$ is indecomposable.
In fact, suppose $\phi_*(V)$ is decomposable. By Theorem~\ref{covering}, we
have that $V$ is
invariant under the action of~$\Sigma_2$. This implies that $V_{i''} =0$ unless
$i=3$, and $V_{3'} \cong V_{3''}$. But this is impossible, since $V$ is
indecomposable.

Further, if $V$ and $W\in \mathcal{J}$ are two different representations of~\eqref{cover265} such that $\phi_*(V)\cong \phi_*(W)$, then by Theorem~\ref{covering}, we have $V\cong \sigma^* W$, where $\sigma = (12)\in
\Sigma_2$. This implies that $V_{i''} \cong W_{i'} =0$ if $i\not=3$. Therefore
either $V$ is isomorphic to the  simple module $S_{3'}$ or to  the simple module $S_{3''}$. Let
$\mathcal{J}' = \mathcal{J}\setminus \left\{ S_{3'}, S_{3''} \right\}$. Then
$\mathcal{J}'$ is infinite and $\left\{ \phi_* (V) \middle| V \in
\mathcal{J}'\right\}$ is the set of pairwise nonisomorphic indecomposable
representations of $S(B^+,2,6)$ over a field of charcteristic $5$. This shows
that $S(B^+,2,6)$ is of infinite type for $p=5$.

 \emph{(g) } Consider $S(B^+,2,5)$ for $p=3$.
 This is the quiver algebra of
 the quiver
 \begin{equation}
	 \label{quiver253}
	 \xygraph{
!{<0cm,0cm>;<1cm,0cm>:<0cm,1cm>::}
[rrrrr]:^{\alpha_4}_(0)5_(1)4[l]:^{\alpha_3}_(1)3[l]:^{\alpha_2}_(1)2[l]:^{\alpha_1}_(1)1[l]:^{\alpha_0}_(1)0[l]
[rrrrr]:@/_5ex/_{\beta_2}[lll]
[rr]:@/^5ex/^{\beta_1}[lll]
[rr]:@/_5ex/_{\beta_0}[lll]
	 }
 \end{equation}
 with relations
 \begin{align}
	 \label{rel253}
\alpha_0 \alpha_1 \alpha_2 & = 0 ,& \alpha_1\alpha_2 \alpha_3 & = 0, & \alpha_2
\alpha_3 \alpha_4 & =0, & \alpha_0 \beta_1 &= \beta_0 \alpha_3 ,& \alpha_1
\beta_2 & = \beta_1 \alpha_4.
 \end{align}
 Let us consider the quiver
 \begin{equation}
	 \label{cover253}
	 \xygraph{
	 !{<0cm,0cm>;<0.7cm,0cm>:<0cm,0.7cm>::}
[drrrrrr]
:_(0){5''}_(1){4''}[l]:_(1){3''}[l]:^(1){2'}[l]:^(1){1'}[l]:^(1){0'}[l]
:@{<-}[u]:@{-}@/_0.3cm/[ld]:@{-}[d]:@{-}@/_0.3cm/[dr]:@{-}[rrrrr]:@(r,r)[u]
:^(0){2''}^(1){1''}[l]:^(1){0''}[l]:@{<-}[u]
[rr]:[d] [lu]:[d]
[ull]:@{<-}[u]:_(0){5'}_(1){4'}[l]:_(1){3'}[l] [r]:[d]
	 }
 \end{equation}
 with an action of $\Sigma_2$ given
  by interchanging $'$ and $''$.
  Then \eqref{quiver253} is the quotient of \eqref{cover253} under this action.
 Let $\mathcal{J}$ be the set of isomorphism classes of finite dimensional
indecomposable representations $V$ of \eqref{cover253}, such that
\begin{align*}
	V_{0'} & = 0 , & V_{1'} & = 0, &  V_{5''} & = 0,
\end{align*}
the map from $V_{3'}$ to $V_{2''}$ is zero, and
the square
\begin{equation*}
	\xymatrix{V_{3''} \ar[d] & V_{4''}\ar[l] \ar[d]\\ V_{0''} & V_{1''}
	\ar[l] }
\end{equation*}
commutes.
Then the elements of $\mathcal{J}$ can be identified with the isomorphism
classes of finite
dimensional indecomposable representations of the quiver~33 in  Ringel's
list~\cite{Ri}. Thus $\mathcal{J}$ is infinite.

Now, for every isomorphism class $[V]$ in $\mathcal{J}$, one can check that
$\phi_* V$ satisfies relations \eqref{rel253} and therefore can be considered as
a representation of $S(B^+,2,5)$ over a field of characteristic $3$.
Theorem~\ref{covering} implies that $\phi_* V$ is indecomposable. In fact,
if this is not the case, then $\sigma_* V \cong V$ and thus $V_{0''} =
V_{1''} = V_{5'} = 0$. Moreover, the map from $V_{3''}$ to $V_{2'}$ is zero.
Hence $V$ is the direct sum of subrepresentations $V'$ and $V''$
defined by
\begin{align}
	\label{v}
	V'_{i'} & = V_{i'} , & V'_{i''} & = 0; & V''_{i'} &= 0, & V''_{i''} &=
	V_{i''},
\end{align}
where $0\le i \le 5$.
Since $V$ is indecomposable, we get that either $V'=0$ or $V''=0$. But
$\sigma_*V \cong V$ implies that $\sigma_* V'\cong V''$ and $\sigma_* V'' \cong
V'$. Thus in both cases,
we get that $V=0$.

Further, for every $[V]\in \mathcal{J}$ there is at most one $[W]\in
\mathcal{J}$ different from $[V]$ such that $[\phi_* V] =[\phi_* W]$. In fact, by
Theorem~\ref{covering}, we get that $V \cong \sigma_* W$, where $\sigma =
(12) \in \Sigma_2$.
Therefore the set
\begin{equation*}
	\left\{\, [\phi_*V] \,\middle|\,  [V] \in \mathcal{J} \right\}
\end{equation*}
is infinite, and we get that $S(B^+,2,5)$ for $p=3$ has infinitely many pairwise
non-isomorphic indecomposable representations.

\emph{ (h)} Consider  $S(B^+,2,4)$ for $p=2$.
It can be considered as a quiver algebra with the quiver
\begin{equation}
	\label{quiver242}
	\xygraph{
	!{<0cm,0cm>;<1cm,0cm>:<0cm,1cm>::}
	[rrrr]:_{\alpha_3}_(-0.2){4}_(1)3[l]:^{\alpha_2}^(1)2[l]:^{\alpha_1}_(1)1[l]:_{\alpha_0}_(1.2)0[l]
	:@{<-}@/^4ex/[rr]^{\beta_0}:@{<-}@/^4ex/[rr]^{\beta_2}:@/^8ex/[llll]^{\gamma}
	[r]:@{<-}@/_4ex/[rr]_{\beta_1}
	}
\end{equation}
and relations\begin{align}
	\label{rel242}
\alpha_0 \alpha_1 & = 0,& \alpha_1 \alpha_2 & =0, & \alpha_2 \alpha_3 &=0, &
\alpha_0 \beta_1 & = \beta_0 \alpha_2, & \alpha_1 \beta_2 &= \beta_1 \alpha_3 ,
& \beta_0 \beta_2 & = 0.
\end{align}
Let us consider the quiver
\begin{equation}
	\label{cover242}
	\xygraph{
	 !{<0cm,0cm>;<0.7cm,0cm>:<0cm,0.7cm>::}
[ddrrrrr]
:^(0){4''}_(0.9){3''}[l]:_(0.7){2''}[l]:^(1){1'}[l]:^(1){0'}[l]:@{<-}[u]
:@{-}@/_0.3cm/[ld]:@{-}[d]:@{-}@/_0.3cm/[rd]:@{-}[rrr]:@(r,r)[u]
:^(0){1''}^(1){0''}[l]:@{<-}[u] [r]:[d] [ru]:[dll]
[urr]:@{-}[u]:@{-}@/_0.3cm/[lu]:@{-}[ll]:@/_0.3cm/[ld]_(0.8){2'}
[rr]:[d] [u]:_(0){4'}_(1){3'}[l]:[l] [r]:[d] [ur]:[dll]
	}
\end{equation}
with an
action of $\Sigma_2$
 given by interchanging $'$ and $''$.
 Then \eqref{quiver242} is the quotient of \eqref{cover242} under this action.
 Let $\mathcal{J}$ be the set of
isomorphism classes of indecomposable representations $V$ of \eqref{cover242} such that $V_{i'} =0$ for $i\not=2$ and the
maps from
$V_{3''}$ to $V_{1''}$, from $V_{3''}$ to $V_{2''}$  and from $V_{2'}$ to $V_{1''}$ are zero maps. Then $\mathcal{J}$ can be
identified with the set of isomorphism classes of indecomposable representations
of the quiver
\begin{equation*}
	\xygraph{
		!{<0cm,0cm>;<0.9cm,0cm>:<0cm,0.5cm>::}
		[rrr]:_(-0.2){2''}_(0.9){0''}[ld]:@{<-}_(1.2){1''}[rd]
		[lu]:@{<-}_(0.8){4''}[l]:_(1.2){2'}[lu] [rd]:^(1.2){3''}[ld]
	}
\end{equation*}
of type $\widetilde{D}_5$.
Thus $\mathcal{J}$ is inifinite. One can check that for every
$[V]\in \mathcal{J}$ the representation $\phi_* V$ of \eqref{quiver242} satisfies
the relations \eqref{rel242} and thus can be considered as a representation of
$S(B^+,2,4)$ over a field of characteristic $2$. Using Theorem~\ref{covering}
one can verify as above that $\phi_* V$ is indecomposable. Moreover, if
$\left[ V \right]$ and $\left[ W \right]\in \mathcal {J}$  are different and
$[\phi_* V] =[ \phi_*W]$ then $[V ] = [\sigma_* W]$.
Therefore the set $\left\{\,
[\phi_* V] \,\middle|\,  [V]\in \mathcal{J} \right\}$
is infinite. Hence $S(B^+,2,4)$ defined over a field of characteristic
$2$ has
infinitely many  pairwise non-isomorphic indecomposable representations.
 This completes the proof of part \emph{(1)} of Theorem~\ref{reptype}.

 \emph{ (2) } Now we study $S(B^+,3,r)$ for $r\geq 2$. The quiver of
 $S(B^+,3,r)$ contains
 a full subquiver $\cQ'$ given by
 \begin{equation*}
	 \xygraph{
	 !{<0ex,0ex>;<9ex,0ex>:<0ex,9ex>::}
	 [r]:@{<-}_(-0.2){
	 (r-1,0,1)}_(1.2){(r-2,1,1)}[l]:[d]:_(-0.2){(r-2,2,0)}_(1.2){(r-1,1,0)}[r]:@{<-}[u]
	 }	
 \end{equation*}
 Let $e$ be the idempotent $e=\xi_{(r-2,1,1)}+\xi_{(r-1,0,1)} + \xi_{(r-2,2,0)} + \xi_{(r-1,1,0)}$. Then
 $eS(B^+,3,r)e$ is isomorphic to $K\cQ'$ and this is of infinite type, by Gabriel's Theorem.
 Hence $S(B^+,3,r)$ is of infinite type, by  Lemma~\ref{eAe}.
 Now it follows, from~\eqref{new2} and Lemma~\ref{eAe}, that $S(B^+,n,r)$ is of infinite type for any $n\geq 3$ and $r\geq 2$.

 Finally, the algebra $S(B^+,n,1)$ is isomorphic to $K\cQ$ where $\cQ$ is a quiver of type $A_n$ with linear orientation, hence it has finite type.
\end{proof}

        \bibliography{almostsplit}
\bibliographystyle{amsplain}

\end{document}